\documentclass{article}
\usepackage[utf8]{inputenc}
\usepackage{amsmath,amssymb,amsthm,amsfonts,latexsym,graphicx,subfigure,mathtools,enumitem}

\usepackage{url}
\usepackage{xcolor}
\usepackage{svg}

\newcommand{\R}{\mathbb{R}}
\newcommand{\RQ}{\textrm{RQ}}

\DeclareMathOperator{\id}{Id}

\DeclareMathOperator{\supp}{supp}

\DeclareMathOperator{\diag}{diag}

\theoremstyle{plain}
\newtheorem{theorem}{Theorem}[section]
\newtheorem{lemma}[theorem]{Lemma}
\newtheorem{proposition}[theorem]{Proposition}
\newtheorem{corollary}[theorem]{Corollary}

\newtheorem{question}{Question}

\theoremstyle{definition}
\newtheorem{definition}[theorem]{Definition}
\newtheorem{example}[theorem]{Example}
\theoremstyle{remark}
\newtheorem{remark}[theorem]{Remark}

\usepackage[affil-it]{authblk}
\usepackage{booktabs}   % for \toprule et al
\usepackage[numbers,sort&compress]{natbib}

\usepackage{hyperref}

\usepackage[margin=3.5cm]{geometry}

\title{At the end of the spectrum: \\ Chromatic bounds for the largest eigenvalue of the normalized Laplacian}
\author[1]{Lies Beers\thanks{e.g.m.beers@vu.nl}}
\author[1,2]{Raffaella Mulas}
\affil[1]{Vrije Universiteit Amsterdam, Amsterdam, The Netherlands}
\affil[2]{Max Planck Institute for Mathematics in the Sciences, Leipzig, Germany}

\date{}

\allowdisplaybreaks

\begin{document}

\maketitle

\begin{abstract} For a graph with largest normalized Laplacian eigenvalue $\lambda_N$ and (vertex) coloring number $\chi$, it is known that $\lambda_N\geq \chi/(\chi-1)$. Here we prove properties of graphs for which this bound is sharp, and we study the multiplicity of $\chi/(\chi-1)$. We then describe a family of graphs with largest eigenvalue $\chi/(\chi-1)$. We also study the spectrum of the $1$-sum of two graphs (also known as graph joining or coalescing), with a focus on the maximal eigenvalue. Finally, we give upper bounds on $\lambda_N$ in terms of $\chi$.

\vspace{0.2cm}
\noindent {\bf Keywords:} $1$-sum; Coloring number; Largest eigenvalue; Normalized Laplacian

\end{abstract}

\tableofcontents

\section{Introduction} 

\subsection{At the end of the color spectrum}
As anyone who is familiar with more than one language knows, there are some things that you can express precisely in one language that you cannot in another, and vice versa. In many aspects, the evolution of different languages is somewhat arbitrary. There are, on the other hand, also universalities in the development of different languages, a notable one being basic color terms. In 1969, Berlin and Kay \cite{berlin1969basic} identified different stages in the development of basic color terms for distinct languages. In Stage I, a distinction is developed between darker (\emph{black}) and lighter shades (\emph{white}). In Stage II, a word for the color \emph{red} and adjacent shades is developed, and in Stage III, either a word for \emph{yellow} or \emph{green} shades emerges; in Stage IV, there are words for both yellow and green. This evolution continues: in Stage V, the word for \emph{blue} is coined, in Stage VI, the word for \emph{brown} appears, and in the final stages, four colors are added, not necessarily in a fixed order,
until there are exactly eleven terms for the basic colors: black, white, red, green, yellow, blue, brown, purple, pink, orange and grey.

Berlin and Kay studied which of the eleven basic color terms were in the vocabulary of different languages from different families, and encountered only $22$ combinations of these eleven colors, out of $2048$ possible combinations.
It thus appears that the evolution of basic color terms is a constant in the development of a language: following more or less the same pattern, the same eleven basic color terms appear in the vocabulary of any language, to describe a spectrum of colors.

A field in which both colors and spectra are studied is graph theory: in spectral graph theory, spectra of different matrices associated with a graph are studied, and in chromatic graph theory \cite{ChartrandZhang2009chromatic,beineke2015topics}, different notions of colorings of a graph are studied. In this paper, we combine these branches of graph theory, by studying bounds relating the largest eigenvalue of the normalized Laplacian of a graph to its vertex coloring number.

\subsection{At the end of the Laplacian spectrum}

The context in which we shall work is the following. Let $G=(V,E)$ be a finite \emph{simple graph}, i.e., an undirected, unweighted graph without multi-edges and without loops, with $N$ vertices $v_1,\ldots,v_N$. Two distinct vertices $v$ and $w$ are called \emph{adjacent}, denoted $v\sim w$ or $w\sim v$, if $\{v,w\}\in E$. The \emph{degree} $\deg_G v$ or $\deg v$ of a vertex $v$ is the number of vertices that it is adjacent to. If the degree of every vertex of $G$ is equal to some fixed positive integer $d$, then we say that $G$ is \emph{regular} or \emph{$d$-regular}. We assume that no vertex has degree $0$.

A \emph{(vertex) $k$-coloring} is a function $c:V\to \{1,\ldots,k\}$, and it is \emph{proper} if $v\sim w$ implies that $c(v)\neq c(w)$. The \emph{(vertex) coloring number} or \emph{chromatic number} $\chi=\chi(G)$ is the minimum $k$ such that there exists a proper $k$-coloring of the vertices. If $\chi(G)\leq 2$, then we say that $G$ is \emph{bipartite}. These and other elementary definitions in graph theory can be found, for instance, in \cite{GodsilRoyle2001}.

When determining the coloring number $\chi$ of a given graph $G$, one can find an upper bound $\chi\leq k$ by giving a proper $k$-coloring of the vertices of $G$. Finding lower bounds directly, by showing that a graph cannot admit a proper $k$-coloring for some $k$, requires more work in general. Therefore, it is useful to find lower bounds in another way, for example, by considering the spectrum of a matrix associated with $G$.

Given a graph $G$, four notable matrices whose spectra are studied in spectral graph theory, are the adjacency matrix, the Kirchoff Laplacian, the signless Laplacian, and the normalized Laplacian. 
The \emph{adjacency matrix} of $G$ is the $N\times N$ matrix $A \coloneqq A(G)$ with entries 
\begin{equation*}
    A_{ij} \coloneqq\begin{cases}1 &\text{ if }v_i\sim v_j\\
    0 &\text{ otherwise.}\end{cases}
\end{equation*}  
The \emph{Kirchoff Laplacian} is the matrix $K\coloneqq K(G)\coloneqq D-A$, and the \emph{signless Laplacian} is the matrix $Q\coloneqq Q(G)\coloneqq D+A$, where $D\coloneqq \diag\bigl(\deg v_1,\ldots,\deg v_N\bigr)$. The spectrum of the adjacency matrix, especially for regular graphs, and
of the Kirchoff Laplacian have been studied, for instance, in \cite{GodsilRoyle2001,CvetkovicDragos2010spectra,brouwer2011spectra}. Literature on the signless Laplacian can be found in \cite{cvetkovic2010spectral}.

In 1992, Chung \cite{chung} introduced the matrix
$$\mathcal{L}:=\mathcal{L}(G):=\id-D^{-1/2}AD^{-1/2},$$ where $\id$ denotes the $N\times N$ identity matrix. 
The matrix $\mathcal{L}$ is \emph{similar} (in matrices terms) to the \emph{normalized Laplacian} of $G$, which is defined as
$$L:=L(G):=\id-D^{-1}A.$$ In fact, $L=D^{-1/2}\mathcal{L} D^{1/2}$.
Therefore, given a graph $G$, the spectra of $L$ and $\mathcal L$ coincide. Here we shall focus on $L$. Its entries are 
\begin{equation*}
    L_{ij}=\begin{cases}1 & \text{ if }i=j\\ -\frac{1}{\deg v_i} & \text{ if }v_i\sim  v_j\\ 0 & \text{ otherwise.}\end{cases}
\end{equation*}In particular, for $v_i\sim v_j$, $-L_{ij}$ is the probability of going from $v_i$ to $v_j$ with a classical random walk on $V$.

The spectra of the four matrices $A$, $K$, $Q$, or $L$ can be used to find different information about $G$. For example, from the spectrum of the adjacency matrix, one can derive the number of edges of $G$, which is not possible from the normalized Laplacian spectrum, whereas the multiplicity of the eigenvalue $0$ in the normalized Laplacian spectrum equals the number of connected components, which is information that the adjacency spectrum cannot give you.

Sometimes one language provides you with the words to say something that you cannot say in another. Similarly, some graphs have the same spectrum with respect to one matrix\,---\,we say that they are \emph{cospectral} with respect to that matrix\,---\,but they have different spectra with respect to another. For example, all complete bipartite graphs with the same number of vertices have the same normalized Laplacian spectrum, but not the same adjacency spectrum.

In the same way that the evolution of basic color terms is a constant factor in the development of a language, we have that $d$-regular graphs are in some way a constant factor in spectral analysis of the aforementioned graphs: if a graph $G$ is $d$-regular, then the spectrum of one among $A$, $K$, $Q$ and $L$, determines the spectrum of the other three. In fact, in this case, we have that $D = d\cdot \id$, implying that $K=d\cdot \id-A$ and $\mathcal{L}=L=\frac{1}{d}\cdot K$. Hence, for $d$-regular graphs,
\begin{align*}
    \lambda \text{ is an eigenvalue for }K 
    &\iff d-\lambda \text{ is an eigenvalue for }A\\
    &\iff \frac{\lambda}{d} \text{ is an eigenvalue for }\mathcal{L}=L.
\end{align*}
This also implies that two $d$-regular graphs $G_1$ and $G_2$ are cospectral with respect to one matrix if and only if they are cospectral with respect to another.

Problems in spectral graph theory include
finding cospectral graphs, as well as finding graphs that are determined by their spectrum, with respect to some of the four matrices that we defined. Other questions concern themselves with relating the spectrum of one of the matrices to other graph properties. An example of a well-known result 
is the Hoffman bound \cite{Hoffman1970}, which gives a lower bound on the vertex coloring number using the smallest and largest adjacency eigenvalues. This bound has been generalized to include more eigenvalues \cite{generalizedHoffman}.

We are, in this paper, interested in the normalized Laplacian spectrum. We let $$\lambda_N\geq\lambda_{N-1}\geq\cdots\geq \lambda_1=0$$ denote the eigenvalues of $L$, and we also introduce the notation $\lambda_{\max}:=\lambda_N$.
We have that $0\leq\lambda \leq 2$ for every eigenvalue $\lambda$ of $L$.
The multiplicity of the eigenvalue $0$ equals the number of connected components, and the multiplicity of the eigenvalue $2$ equals the number of bipartite components. 
More background on the normalized Laplacian spectrum can be found in \cite{chung,ButlerChung2006,Cavers2010,Butler2016}.

Since the normalized Laplacian spectrum of a graph equals the union of the spectra of its connected components,
we assume for the rest of the paper that $G$ is connected. We also assume that $N\geq2$.
The normalized Laplacian eigenvalues that are studied the most, are the second smallest and the largest. It is known, for example, that the largest eigenvalue equals $2$ if and only if $G$ is bipartite, while it is equal to $N/(N-1)$ if and only if $G$ is the complete graph. For all other graphs, we have that $\lambda_N\geq (N+1)/(N-1)$ \cite{SunDas2016,JMM}.
Some other problems involving the normalized Laplacian 
regard multiplicities, for example: Which graphs have two normalized Laplacian eigenvalues, and which graphs have three normalized Laplacian eigenvalues \cite{VanDamOmidi2011}? The answer to the first question is: only complete graphs, while the answer to the second question is not known.

Other questions include: Which graphs are determined by their spectrum?
Which graphs have an eigenvalue with multiplicity $N-2$ \cite{VanDamOmidi2011}, and which graphs have an eigenvalue with multiplicity $N-3$ \cite{SunDas2021,TianWang2021}? How do the eigenvalues change when deleting an edge of $G$ \cite{butler-interlacing}? How does the spectrum change under other graph operations \cite{ChenLiao2017}?

As we saw before, we have, given $N$, that bipartite graphs have the largest possible largest eigenvalue, whereas the complete graph has the smallest possible largest eigenvalue. In some other sense, bipartite graphs and complete graphs are also on opposite ends of a spectrum:
the former has the smallest possible coloring number, and the latter has the largest possible coloring number given $N$. In both cases, we have that $\lambda_N = \chi/(\chi-1)$.
Elphick and Wocjan (2015) \cite{ElphickWocjan2015} (Equation 20) proved that, in general, we have the inequality
$$\lambda_N\geq \frac{\chi}{\chi-1},$$ which coincides with the Hoffman bound for regular graphs.

In this paper, we study graphs for which this inequality is sharp. These graphs are special, as they relate to some of the aforementioned problems, regarding the smallest possible value of $\lambda_N$ in terms of $N$, and graphs with a largest eigenvalue of multiplicity $N-2$ and $N-3$.

\section{Background}\label{section:background}

\subsection{Basic definitions, notations and properties}
In this section we shall introduce some more definitions, notations and properties that we shall refer to throughout the paper. As in the Introduction, we fix a simple graph $G=(V,E)$ on $N$ vertices, we assume that $G$ is connected, and we let $v_1,\ldots,v_N$ denote its vertices.

We start by listing several properties of the normalized Laplacian of $G$ and its spectrum.
\begin{remark}
Let $C(V)$ denote the vector space of functions $f:V\rightarrow\mathbb{R}$ and, given $f,g\in C(V)$, let
	\begin{equation*}
	\langle f,g\rangle:=\sum_{v\in V}\deg v\cdot f(v)\cdot g(v).
	\end{equation*}
We can see the normalized Laplacian $L$ as an operator $C(V)\rightarrow C(V)$ such that
\begin{equation}\label{eq:Lf}
    Lf(v)=f(v)-\frac{1}{\deg v}\sum_{w\sim v}f(w).
\end{equation}Also, it is easy to check that $L$ is self-adjoint with respect to the inner product $	\langle \cdot,\cdot \rangle$, i.e.,
	\begin{equation*}
	\langle Lf,g\rangle=\langle f,Lg\rangle \quad \forall f,g\in C(V).
	\end{equation*}
\end{remark}

\begin{remark}
    By \eqref{eq:Lf}, $(\lambda,f)$ is an eigenpair for $L$ if and only if, for all $v\in V$,
    \begin{equation*}
    \lambda f(v)=f(v)-\frac{1}{\deg v}\sum_{w\sim v}f(w),
\end{equation*}which can be equivalently rewritten as
\begin{equation}\label{eq:eigenpair}
    (1-\lambda) f(v)=\frac{1}{\deg v}\sum_{w\sim v}f(w).
\end{equation}
\end{remark}

With the Courant-Fischer-Weyl min-max Principle below, we can characterize the eigenvalues of $L$.

\begin{theorem}[Courant-Fischer-Weyl min-max Principle]
\label{min-max theorem}

Let $H$ be an $N$-dimensional vector space with a positive definite scalar product $(.,.)$, and let $A:H\rightarrow H$ be a self-adjoint linear operator. Let $\mathcal{H}_{k}$  be the  family of all $k$-dimensional subspaces of $H$.
  Then the eigenvalues $\lambda_{1}\leq \ldots \leq \lambda_{N}$ of
$A$ can be obtained by 
\begin{equation}
\label{min-max}
\lambda_{k}=\min_{H_k\in \mathcal{H}_{k}}\max_{g(\neq 0)\in H_{k}}\frac{(Ag,g)}{(g,g)}=\max_{{H}_{N-k+1} \in \mathcal{H}_{N-k+1}}\min_{g(\neq 0)\in {H}_{N-k+1}}\frac{(Ag,g)}{(g,g)}.
\end{equation}
The vectors $g_k$ realizing such a min-max or max-min then are corresponding
eigenvectors, and the min-max spaces $H_{k}$ are spanned
by the eigenvectors for the eigenvalues $\lambda_1,\dots
,\lambda_k$, and analogously, the max-min spaces $H_{N-k+1}$ are spanned
by the eigenvectors for the eigenvalues $\lambda_k, \dots, \lambda_N$.\\

Thus, we also have

\begin{equation}
\label{min-max1}
\lambda_{k}=\min_{g(\neq 0)\in H, (g,g_j)=0\ \mathrm{ for }\ j=1,\dots ,k-1}\frac{(Ag,g)}{(g,g)}=\max_{g(\neq 0)\in H, (g,g_\ell)=0\ \mathrm{ for }\ \ell=k+1,\dots ,N}\frac{(Ag,g)}{(g,g)}.
\end{equation}

In particular, 
\begin{equation}
\label{min-max2}
\lambda_{1}=\min_{g(\neq 0)\in H}\frac{(Ag,g)}{(g,g)},\qquad \lambda_N=\max_{g(\neq 0)\in H}\frac{(Ag,g)}{(g,g)}.
\end{equation}
\end{theorem}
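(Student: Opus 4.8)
The plan is to reduce everything to the spectral theorem for self-adjoint operators. Since $A$ is self-adjoint with respect to the positive definite scalar product $(.,.)$, I would first invoke the spectral theorem to obtain an orthonormal basis $g_1,\ldots,g_N$ of $H$ consisting of eigenvectors, with $Ag_j=\lambda_j g_j$ and $\lambda_1\leq\cdots\leq\lambda_N$. Writing an arbitrary vector as $g=\sum_j c_j g_j$, the Rayleigh quotient becomes a weighted average of eigenvalues,
$$\frac{(Ag,g)}{(g,g)}=\frac{\sum_j \lambda_j c_j^2}{\sum_j c_j^2},$$
from which the elementary bounds $\lambda_1\leq (Ag,g)/(g,g)\leq\lambda_N$ follow at once; these already yield the special cases in \eqref{min-max2}.

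For the general min-max formula I would prove the two inequalities separately. For the upper bound $\lambda_k\geq\min_{H_k}\max_{g\in H_k}(Ag,g)/(g,g)$, I would exhibit the specific subspace $H_k^\ast:=\mathrm{span}(g_1,\ldots,g_k)$: every nonzero $g\in H_k^\ast$ has $c_j=0$ for $j>k$, so its Rayleigh quotient is a weighted average of $\lambda_1,\ldots,\lambda_k$ and hence at most $\lambda_k$, with equality attained at $g=g_k$. Thus the inner maximum over $H_k^\ast$ equals $\lambda_k$, bounding the outer minimum above by $\lambda_k$.

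For the matching lower bound, the key device is a dimension count. Given any $k$-dimensional subspace $H_k$, I would consider $W:=\mathrm{span}(g_k,g_{k+1},\ldots,g_N)$, of dimension $N-k+1$. Since $\dim H_k+\dim W=N+1>N$, the intersection $H_k\cap W$ contains a nonzero vector $g$; for such $g$ only the coefficients $c_k,\ldots,c_N$ survive, so its Rayleigh quotient is a weighted average of $\lambda_k,\ldots,\lambda_N$ and is therefore at least $\lambda_k$. Hence $\max_{g\in H_k}(Ag,g)/(g,g)\geq\lambda_k$ for every $H_k$, and taking the minimum gives the reverse inequality, proving the first equality in \eqref{min-max}. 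I expect this pigeonhole-style intersection argument to be the main obstacle, as it is the one genuinely non-mechanical step; everything else is bookkeeping with the weighted-average form of the Rayleigh quotient.

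The max-min equality in \eqref{min-max} I would obtain by applying the already-proven min-max formula to the self-adjoint operator $-A$, whose ordered eigenvalues are $-\lambda_N\leq\cdots\leq-\lambda_1$, and then translating back; alternatively one runs the same two-step argument with the roles of the top and bottom of the spectrum interchanged. The refinements in the statement then come for free from the proof: the outer extremum is realized by $H_k^\ast=\mathrm{span}(g_1,\ldots,g_k)$ (respectively $\mathrm{span}(g_k,\ldots,g_N)$), identifying the optimal min-max and max-min spaces, while the eigenvector $g_k$ realizes the inner extremum. Finally, \eqref{min-max1} follows by specializing \eqref{min-max} to the single subspace cut out by the orthogonality constraints $(g,g_j)=0$ for $j<k$ (respectively $(g,g_\ell)=0$ for $\ell>k$), which pins down exactly $\mathrm{span}(g_k,\ldots,g_N)$ (respectively $\mathrm{span}(g_1,\ldots,g_k)$) and reduces the formula to a plain optimization of the Rayleigh quotient over that fixed space.
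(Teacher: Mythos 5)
Your proof is correct: it is the standard Courant--Fischer argument via the spectral theorem, the weighted-average form of the Rayleigh quotient, and the dimension-count $\dim H_k+\dim\mathrm{span}(g_k,\ldots,g_N)=N+1>N$ forcing a nontrivial intersection. The paper states this theorem as classical background and gives no proof of its own, so there is nothing to compare against; your write-up supplies exactly the expected argument, including the correct handling of the max-min half (via $-A$) and of \eqref{min-max1} as optimization over the fixed subspace $\mathrm{span}(g_k,\ldots,g_N)$.
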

\begin{definition}\label{RQ}
$(Ag,g)/(g,g)$ is called the \emph{Rayleigh quotient} of $g$. 
\end{definition}

According to Theorem \ref{min-max theorem}, the eigenvalues of $L$ are given by min-max values of
\begin{equation*}
    \RQ(f):=\frac{\langle Lf,f\rangle}{\langle f,f\rangle}=\frac{\sum_{v\sim w}\biggl(f(v)-f(w)\biggr)^2}{\sum_{v\in V}\deg v\cdot f(v)^2}, \quad \text{for }f\in C(V).
\end{equation*}In particular, let $k\in \{1,\ldots,N\}$ and let $g_i$ be eigenfunctions for $\lambda_i$, for each $i\in \{1,\ldots,N\}\setminus \{k\}$, that are pairwise linearly independent. Then,
\begin{equation*}
    \lambda_k=\min_{\substack{f\in C(V)\setminus\{\mathbf{0}\}:\\ \langle f,g_1\rangle=\ldots=\langle f,g_{k-1}\rangle=0}} \RQ(f)=\max_{\substack{f\in C(V)\setminus\{\mathbf{0}\}:\\ \langle f,g_{k+1}\rangle=\ldots=\langle f,g_N\rangle=0}} \RQ(f),
\end{equation*}and the functions realizing such a min-max are the corresponding eigenfunctions for $\lambda_k$.

\begin{remark}
The largest eigenvalue of the corresponding normalized Laplacian can be characterized by
\begin{equation*}
         \lambda_N=\max_{f:V\rightarrow \mathbb{R}} \frac{\sum_{v\sim w}\biggl(f(v)-f(w)\biggr)^2}{\sum_{v\in V}\deg v\cdot f(v)^2}.
\end{equation*}
Furthermore, any function $f\in C(V)\setminus \{\boldsymbol{0}\}$ attaining this maximum is an eigenfunction of $L$ with eigenvalue $\lambda_N$.
\end{remark}

We shall now give the definitions of independent sets, twin vertices and duplicate vertices.
\begin{definition}\label{def:indepset}
    Let $U\subseteq V$. We say that $U$ is an \emph{independent set} if, for all pairs of vertices $u_1,u_2\in U$, we have that $u_1\not\sim u_2$.
\end{definition}

\begin{definition}\label{def:twinduplicate}
   Given $u\in V$, we let $N(u)$ denote the set of all \emph{neighbors} of $u$, i.e., the set of all vertices that are adjacent to $u$.
    If two distinct vertices $v,w\in V$ have the property that
    \[
    N(v)\setminus\{w\} = N(w)\setminus\{v\},
    \]
    then $v$ and $w$ are \emph{twin vertices} if $v\sim w$, while $v$ and $w$ are \emph{duplicate vertices} if $v\not\sim w$.
\end{definition} 
We refer to \cite{Butler2016} for an extensive study of twin vertices, duplicate vertices and twin subgraphs.

\begin{definition}\label{def:e(U,V)} Let $U_1,U_2\subset V$ be subsets of the vertex set of $G$. We let
    \[
    e\bigl(U_1,U_2\bigr)\coloneqq \bigl|\bigl\{ \{u,v\}\in E\colon u\in U_1,v\in U_2\bigr\}\bigr|.
    \]
    Moreover, if $U_1=\{v\}$ for some $v\in V$, we let
    \[
    e\bigl(v,U_2\bigr)\coloneqq e\bigl(\{v\},U_2\bigr).
    \]
\end{definition}

\begin{definition}\label{def:f_vw}
Let $v,w\in V$ be distinct vertices. We let
    \begin{equation*}
    f_{v,w}(u) \coloneqq \begin{cases}
        1, &\text{ if }u= v,\\
        -1, &\text{ if }u=w,\\
        0, &\text{ otherwise.}
    \end{cases}
    \end{equation*}
\end{definition}

\begin{definition}\label{def:f_ij}
   Fix a proper $k$-coloring of $G$ with coloring classes $V_1,\ldots,V_k$. Given two distinct indices $i,j\in\{1,\ldots,k\}$, we define $f_{ij}\colon V\to\R$ by
    \[
    f_{ij}(v) \coloneqq \begin{cases}
        1, &\text{ if } v\in V_i,\\
        -1, &\text{ if } v\in V_j,\\
        0, &\text{ otherwise.}
    \end{cases}
    \]
\end{definition}
Note that, if $G$ bipartite and $V_1$ and $V_2$ denote its bipartition classes, then the function $f_{12}$ from Definition \ref{def:f_ij} has the property that $$\RQ(f_{12}) = 2 = \lambda_N = \frac{\chi}{\chi-1}.$$ If $G$ is a complete graph, then any function $f_{ij}$ from Definition \ref{def:f_ij} is of the form $f_{v,w}$ as in Definition \ref{def:f_vw} for some $v,w\in V$, and it has the property that
    \[
    \RQ(f_{ij}) = \frac{N}{N-1} = \lambda_N = \frac{\chi}{\chi-1}.
    \]

    We conclude the section with the following definitions.

\begin{definition}\label{def:equitable}\cite{Gabriel-colouring}
    Let $M$ be an $n\times n$ matrix and let $\pi = \{S_1,\ldots,S_m\}$ be a partition of $\{1,\ldots,n\}$. The partition $\pi$ is \emph{equitable to $M$} if, for all $S_{i_1}\neq S_{i_2}$, and for all $j\in S_{i_1}$, the sum $\sum_{k\in S_{i_2}}M_{jk}$ is constant. \end{definition} 

    In particular, we are interested in equitable partitions in the case where $M=D^{-1}A$ or $M=A$. We let, for $k\geq \chi$, $V_1,\ldots,V_k$ be the coloring classes with respect to some fixed proper $k$-coloring $c$ of $V$.
    \begin{itemize}
        \item We say that $c$ is \emph{equitable with respect to $D^{-1}A$} if, for all $i=1,\ldots,k$ and all $v\in V$, we have that
        \[
        e(v,V_i) = \begin{cases}
        \frac{\deg v}{k-1}, &\text{ if } v\notin V_i,\\
        0, &\text{ if } v\in V_i.
        \end{cases}
        \]
        \item We say that $c$ is \emph{equitable with respect to $A$ if, for all $i,j=1,\ldots,k$}, for all $v_i,w_i \in V_i$, we have that $$e(v_i,V_j) = e(w_i,V_j).$$
    \end{itemize}

   % Let $V_1,\ldots,V_\chi$ be the coloring classes with respect to some fixed proper $\chi$-coloring $c$ of $V$. We say that $c$ is \emph{equitable with respect to $D^{-1}A$} if for all $i=1,\ldots,\chi$ and all $v\in V$, we have that
    %\[
    %e(v,V_i) = \begin{cases}
    %    \frac{\deg v}{\chi-1}, &\text{ if } v\notin V_i,\\
    %    0, &\text{ if } v\in V_i.
    %\end{cases}
    %\]

\subsection{Given families of graphs with corresponding coloring number and spectrum}\label{subsec:listofgphs}
We shall now list some special graphs, together with their coloring number and their spectrum with respect to the normalized Laplacian. A more elaborate list of graphs and their spectra, also including the spectra with respect to other matrices than the normalized Laplacian, can be found in \cite{Cavers2010}. We use the notation
\[
\bigl\{ \mu_1^{(m_1)},\ldots,\mu_p^{(m_p)} \bigr\}
\]
to denote a multiset which contains the element $\mu_i$ with multiplicity $m_i$. Throughout the paper, we shall also use the notation $m_G(\lambda)$ for the multiplicity of $\lambda$ as an eigenvalue of the normalized Laplacian of $G$.

\begin{enumerate}
    \item The complete graph $K_N$ on $N$ vertices has coloring number $\chi = N$ and spectrum
    \[
    \biggl\{ \frac N{N-1}^{(N-1)},0^{(1)}\biggr\}.
    \]
    \item The complete bipartite graph $K_{N_1,N_2}$ on $N=N_1+N_2$ vertices has coloring number $\chi =2$ and spectrum
    \[
    \biggl\{2^{(1)},1^{(N-2)},0^{(1)}\biggr\}.
    \]
    A special example is the star graph $S_N = K_{N-1,1}$.
    \item The complete multipartite graph with partition classes of the same size $K_{\underbrace{N_1,\ldots,N_1}_k}$ on $N=k\cdot N_1$ vertices, which can be equivalently described as the Turán graph $T(N,k)$ \cite{turan1941,sidorenko1995we,furedi1991}, has coloring number $\chi=k$ and spectrum
    \[
    \biggl\{\frac{k}{k-1}^{(k-1)},1^{(N-k)},0^{(1)}\biggr\}.
    \]
    \item The \emph{$m$-petal graph} \cite{JMZ23} on $N=2m+1$ vertices is the graph with vertex set
    $$V = \{x,v_1,\ldots,v_m,w_1,\ldots,w_m\}$$
    and edge set
    \[
     E = \bigcup_{i=1}^m \biggl\{ \{x,v_i\},\{x,w_i\},\{v_i,w_i\} \biggr\}.
    \]
    An example can be found in Figure \ref{fig:petalgph} below. Its coloring number $\chi$ equals $3$, and its spectrum is
    \[
    \biggl\{ \frac{3}{2}^{(m+1)}, \frac12^{(m-1)},0^{(1)} \biggr\}.
    \]
\end{enumerate}

\section{Graphs with largest eigenvalue \texorpdfstring{$\chi/(\chi-1)$}{chi/(chi-1)}}\label{section:chi/(chi-1)}
\subsection{Literature review}
 Also in this section we fix a connected simple graph $G=(V,E)$ on $N\geq 2$ vertices.

The following theorem gives a lower bound for the maximum eigenvalue of the normalized Laplacian of a graph in terms of its coloring number. It was first proven by Elphick and Wocjan (2015) \cite{ElphickWocjan2015} (Equation 20) as a consequence of Theorem 1 from Nikiforov (2007) \cite{Nikiforov2007}. Elphick and Wocjan also generalized the bound to include more normalized Laplacian eigenvalues (Equation 21).
Furthermore, the theorem was proven by
Coutinho, Grandsire and Passos (2019) \cite{Gabriel-colouring} (Lemma 6)
and by Sun and Das (2020) \cite{SunDas2020} (Theorem 3.1). A generalization for hypergraphs was proven by Abiad, Mulas and Zhang (2021) \cite{Abiad20} (Corollary 5.4).
\begin{theorem}\label{thm:3.1DS}
We have that
\begin{equation}\label{eq:ourbound}
\lambda_N\geq \frac{\chi}{\chi-1},
\end{equation} and this inequality is sharp.
\end{theorem}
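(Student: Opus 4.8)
The plan is to prove the inequality variationally, using the characterization $\lambda_N=\max_f \RQ(f)$ together with a family of test functions built from a proper $\chi$-coloring, and to establish sharpness by pointing to the graphs computed in Section~\ref{subsec:listofgphs}. First I would observe that the Rayleigh quotient extends verbatim to complex-valued functions $f\colon V\to\mathbb{C}$: a short computation (expanding $|f(v)-f(w)|^2$ over all edges) shows that the numerator equals $\sum_{\{v,w\}\in E}|f(v)-f(w)|^2$, so that
\begin{equation*}
\RQ(f)=\frac{\sum_{\{v,w\}\in E}|f(v)-f(w)|^2}{\sum_{v\in V}\deg v\cdot |f(v)|^2}\in[0,\lambda_N]
\end{equation*}
for every nonzero complex $f$; in particular $\lambda_N\geq\RQ(f)$. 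This is what lets me use roots of unity as test values.

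Fix a proper coloring $c\colon V\to\{1,\dots,\chi\}$ and set $\omega=e^{2\pi i/\chi}$. For each $t\in\{1,\dots,\chi-1\}$ define $f^{(t)}(v)=\omega^{t\,c(v)}$. Each denominator equals $\sum_{v\in V}\deg v=2|E|$, so the $f^{(t)}$ are on equal footing. The key step is to sum the $\chi-1$ numerators: for a fixed edge $\{v,w\}$ with $\delta:=c(v)-c(w)\not\equiv 0\pmod{\chi}$, the identity $|\omega^{t c(v)}-\omega^{t c(w)}|^2=2-\omega^{t\delta}-\omega^{-t\delta}$ together with $\sum_{t=1}^{\chi-1}\omega^{t\delta}=\sum_{t=1}^{\chi-1}\omega^{-t\delta}=-1$ gives $\sum_{t=1}^{\chi-1}|f^{(t)}(v)-f^{(t)}(w)|^2=2\chi$, independent of the edge. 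Summing over all edges yields
\begin{equation*}
\sum_{t=1}^{\chi-1}\RQ\bigl(f^{(t)}\bigr)=\frac{2\chi|E|}{2|E|}=\chi,
\end{equation*}
so by averaging at least one index $t$ satisfies $\RQ\bigl(f^{(t)}\bigr)\geq\chi/(\chi-1)$, and therefore $\lambda_N\geq\RQ\bigl(f^{(t)}\bigr)\geq\chi/(\chi-1)$.

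I expect the main obstacle to be conceptual rather than computational: a single test function (for instance $f_{ij}$ from Definition~\ref{def:f_ij}, or one power $f^{(t)}$) only yields the bound when the coloring is suitably balanced, and the entire point of averaging over the nontrivial powers $t=1,\dots,\chi-1$ is that the per-edge contribution then collapses to the constant $2\chi$. This collapse depends crucially on properness, which guarantees $\delta\not\equiv 0\pmod{\chi}$ and hence $\sum_{t=1}^{\chi-1}\omega^{t\delta}=-1$; for a non-proper coloring the identity fails. Two minor points remain to be checked, namely that each $f^{(t)}$ is nonzero (immediate) and that the complex Rayleigh quotient genuinely lower-bounds $\lambda_N$ (which follows by splitting $f^{(t)}$ into real and imaginary parts, both real Rayleigh quotients being bounded by $\lambda_N$ since $L$ is self-adjoint). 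Finally, sharpness requires no new work: the complete graphs, complete bipartite graphs, Turán graphs and petal graphs of Section~\ref{subsec:listofgphs} all satisfy $\lambda_N=\chi/(\chi-1)$, as already recorded after Definition~\ref{def:f_ij} for the first two families.
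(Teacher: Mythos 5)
Your proof is correct, but it takes a genuinely different route from the paper's. The paper proves the bound with a single real test function: it picks the pair of coloring classes maximizing $e(V_i,V_j)$, say $V_1,V_2$, and shows directly that the function $f_{12}$ of Definition~\ref{def:f_ij} satisfies $\RQ(f_{12})\geq 1+\tfrac{2e(V_1,V_2)}{2e(V_1,V_2)+e(V_1,W)+e(V_2,W)}\geq \tfrac{\chi}{\chi-1}$, using $e(V_1,W)+e(V_2,W)\leq 2(\chi-2)e(V_1,V_2)$. You instead average the Rayleigh quotients of the $\chi-1$ character-valued functions $f^{(t)}(v)=\omega^{tc(v)}$ and use the roots-of-unity identity to get the exact average $\chi/(\chi-1)$; your computations check out, including the reduction of the complex Rayleigh quotient to real ones via the mediant inequality (which is Lemma~\ref{lem:fractions} of the paper; just note that when one of the real or imaginary parts vanishes identically, as happens for $\chi=2$, the quotient is simply that of the other part). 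The trade-off is this: your argument is symmetric in the color classes, requires no extremal choice, and the identity $\sum_t\RQ(f^{(t)})=\chi$ naturally points toward the stronger Elphick--Wocjan-type bounds involving sums of the top eigenvalues. The paper's argument, by contrast, produces an explicit \emph{real} function whose Rayleigh quotient attains the bound exactly when $\lambda_N=\chi/(\chi-1)$, and the equality analysis of that one chain of inequalities is precisely what yields Theorem~\ref{thm:edgespread} (every proper $\chi$-coloring is equitable with respect to $D^{-1}A$), which drives the rest of the paper; extracting comparable structural information from your averaging argument would be harder, since equality only forces at least one $t$ to reach the average. Your sharpness discussion matches the paper's (the families of Section~\ref{subsec:listofgphs}).
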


Rewriting the inequality in \eqref{eq:ourbound} gives a lower bound for the chromatic number,
\[
\chi \geq \frac{\lambda_N}{\lambda_N-1}.
\]
Moreover, for regular graphs, the bound from Equation \ref{eq:ourbound} coincides with the Hoffman bound \cite{Hoffman1970},
\begin{equation}\label{eq:hoffmanbound}
    \chi \geq 1 - \frac{\mu_1}{\mu_N},
\end{equation}
where $\mu_1$ and $\mu_N$ denote the smallest and largest eigenvalues of the adjacency matrix, respectively. Graphs for which the Hoffman bound is sharp have been studied, for example, in \cite{HaemersTonchev1996,FialaHaemers20065-chromatic,Godsiletal2016,Roberson2019SRGs,ThijsThesis}.

In \cite{SunDas2020}, Sun and Das state the following analogous open question:
\begin{question}\label{qu:mainqu}
Which connected finite graphs satisfy $\lambda_N=\chi/(\chi-1)$?
\end{question}
Sun and Das also give a couple of graphs for which this equality holds, including complete multipartite graphs with partition classes of equal size, and $m$-petal graphs (cf.\ Section \ref{subsec:listofgphs} and \cite{JMZ23}).
Furthermore, Coutinho, Grandsire and Passos (2019) \cite{Gabriel-colouring} prove the following  necessary property for graphs with largest eigenvalue $\chi/(\chi-1)$.

\begin{theorem}[Coutinho,  Grandsire \& Passos (2019) \cite{Gabriel-colouring}, Theorem 7]\label{thm:edgespread}
    If $\lambda_N=\chi/\bigl(\chi-1\bigr)$, then every proper $\chi$-coloring of $G$ is equitable with respect to $D^{-1}A$, i.e., for a fixed proper $\chi$-coloring of $G$ with coloring classes $V_1,\ldots,V_\chi$, we have for all $1\leq i\leq \chi$ and all $v\in V$ that
    \[
    e(v,V_i) = \begin{cases}
        \frac{\deg v}{\chi-1}, &\text{ if } v\notin V_i,\\
        0, &\text{ if }v\in V_i.
    \end{cases}
    \]
\end{theorem}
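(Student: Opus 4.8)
The plan is to run the standard vector-coloring (simplex) proof of the bound $\lambda_N\ge\chi/(\chi-1)$ and then extract the combinatorial conclusion from the equality case. First I would fix a regular simplex in $\R^{\chi-1}$: unit vectors $s_1,\dots,s_\chi$ with $\langle s_i,s_i\rangle=1$ and $\langle s_i,s_j\rangle=-1/(\chi-1)$ for $i\ne j$, so that $\sum_{i=1}^{\chi}s_i=\mathbf 0$. Given the fixed proper $\chi$-coloring $c$ with classes $V_1,\dots,V_\chi$, I define the vector-valued map $F\colon V\to\R^{\chi-1}$, $F(v)=s_{c(v)}$, and let $f_1,\dots,f_{\chi-1}$ be its coordinate functions. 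Because $c$ is proper, adjacent vertices receive distinct colors, so $\bigl\|F(v)-F(w)\bigr\|^2=2\chi/(\chi-1)$ for every edge $\{v,w\}$, while $\|F(v)\|^2=1$ for every $v$.

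Next I would aggregate the per-coordinate Rayleigh inequalities. Writing $N_t:=\sum_{\{v,w\}\in E}\bigl(f_t(v)-f_t(w)\bigr)^2$ and $D_t:=\sum_v\deg v\,f_t(v)^2$, the identities above give $\sum_t N_t=|E|\cdot 2\chi/(\chi-1)$ and $\sum_t D_t=\sum_v\deg v=2|E|$. Since every vertex has positive degree, $N_t\le\lambda_N D_t$ holds for each $t$ (this is $\RQ(f_t)\le\lambda_N$ when $D_t>0$, and is trivial when $f_t\equiv 0$). Summing recovers $\lambda_N\ge\chi/(\chi-1)$; the crux is that under the hypothesis $\lambda_N=\chi/(\chi-1)$ the aggregated inequality becomes an equality, forcing $N_t=\lambda_N D_t$ for every $t$. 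Hence each nonzero coordinate function $f_t$ attains the maximum of the Rayleigh quotient and is therefore an eigenfunction of $L$ for $\lambda_N$.

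I then use the eigenvalue equation at the vector-valued level. Applying \eqref{eq:eigenpair} coordinatewise with $1-\lambda_N=-1/(\chi-1)$ gives, for every $v$, that $-\tfrac{1}{\chi-1}F(v)=\tfrac{1}{\deg v}\sum_{w\sim v}F(w)$. Since $F(w)=s_{c(w)}$ and no neighbor of $v$ shares its color, the right-hand sum equals $\sum_{i\ne c(v)}e(v,V_i)\,s_i$. Substituting $s_{c(v)}=-\sum_{i\ne c(v)}s_i$ and rearranging yields $\sum_{i\ne c(v)}\bigl(\tfrac{\deg v}{\chi-1}-e(v,V_i)\bigr)s_i=\mathbf 0$. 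The final step is the linear-algebra observation that any $\chi-1$ of the simplex vectors $s_1,\dots,s_\chi$ are linearly independent (their only dependence being $\sum_i s_i=\mathbf 0$), so all coefficients vanish, giving $e(v,V_i)=\deg v/(\chi-1)$ for $i\ne c(v)$ and $e(v,V_{c(v)})=0$, which is exactly equitability with respect to $D^{-1}A$.

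I expect the main obstacle to be the equality analysis in the middle step: justifying that the single aggregate equality propagates to term-by-term equalities (handling coordinates with $D_t=0$ cleanly) and hence that each coordinate function is a genuine eigenfunction, rather than merely that their weighted combination behaves like one. Once that is secured, the passage to the combinatorial conclusion is routine, relying only on $\sum_i s_i=\mathbf 0$ and the independence of any $\chi-1$ of the simplex vectors.
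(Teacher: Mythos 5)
Your proof is correct, and it takes a genuinely different route from the one in the paper. The paper's argument is purely scalar: it picks the pair of colour classes maximizing $e(V_i,V_j)$, computes $\RQ(f_{12})$ explicitly, shows that equality in $\lambda_N\geq\RQ(f_{12})\geq\chi/(\chi-1)$ forces all the quantities $e(V_i,V_j)$ to coincide, and only then concludes that each $f_{ij}$ is an eigenfunction and reads off the vertex-level condition from \eqref{eq:eigenpair}. You instead rerun the simplex (vector-colouring) proof of the lower bound itself\,---\,the mechanism behind the Nikiforov and Elphick--Wocjan arguments\,---\,and harvest its equality case: the coordinatewise inequalities $N_t\leq\lambda_N D_t$ summed against $\sum_t N_t=2|E|\cdot\chi/(\chi-1)$ and $\sum_t D_t=2|E|$ recover the bound, equality forces every coordinate of $F$ to be a $\lambda_N$-eigenfunction, and the eigenvalue equation at a single vertex, combined with $\sum_i s_i=\mathbf 0$ and the linear independence of any $\chi-1$ of the simplex vectors (which holds, since the Gram matrix has rank $\chi-1$ with kernel spanned by the all-ones vector), yields $e(v,V_i)=\deg v/(\chi-1)$ in one pass, bypassing the paper's extremal choice and the intermediate class-level equalities altogether. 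What the paper's route buys is elementarity (only scalar test functions and a direct Rayleigh-quotient computation) plus the explicit exhibition of the $\chi-1$ eigenfunctions $f_{1j}$ that are reused later, e.g.\ in Proposition \ref{prop:multchi-1} and Proposition \ref{prop:multlambdaN}; what your route buys is a cleaner, symmetric equality analysis that treats all colour classes at once and delivers the vertex-level statement directly. Your handling of the potential degenerate coordinates with $D_t=0$ is sound, and in fact such coordinates cannot occur: every colour class of a proper $\chi$-colouring is nonempty, and the vectors $s_1,\ldots,s_\chi$ span $\R^{\chi-1}$, so no coordinate function is identically zero.
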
\color{black}
We offer an alternative proof to Theorem \ref{thm:edgespread}.
\begin{proof}
    Without loss of generality, we may assume that
    \begin{align}
    e\bigl(V_1,V_2\bigr) &= \max_{1\leq i < j \leq \chi}e\bigl(V_i,V_j\bigr).\label{eq:e(V1,V2)}
    \end{align}
    Let $W\coloneqq V\setminus \bigl(V_1\cup V_2\bigr)$, and 
 consider the function $f_{12}$ from Definition \ref{def:f_ij}.
    We have that
    \begin{align}
    \begin{split}
        \RQ\bigl(f_{12}\bigr)
        &= \frac{\sum_{v\sim w}\bigl(f_{12}(v)-f_{12}(w)\bigr)^2}{\sum_{v\in V}\deg v\cdot f_{12}(v)^2}\\
        &= 1 - 2\frac{\sum_{v\sim w}f_{12}(v)\cdot f_{12}(w)}{\sum_{v\in V}\deg v\cdot f_{12}(v)^2}\\
        &= 1 + \frac{2\cdot e(V_1,V_2)}{2e\bigl(V_1,V_2\bigr)+e\bigl(V_2,W\bigr)+e\bigl(V_1,W\bigr)}\\
        &\geq 1 + \frac{2e(V_1,V_2)}{2e(V_1,V_2)+2(\chi-2)e(V_1,V_2)}\label{eq:inequality}\\
        &= \frac{\chi}{\chi-1}.
    \end{split}
    \end{align}
    Together with the assumption that $\lambda_N=\chi/(\chi-1)$, this implies that $$\RQ(f_{12})=\frac{\chi}{\chi-1}.$$ Hence, the inequality \eqref{eq:inequality} must be an equality, implying that
    \[
    e(V_1,V_2) = e(V_1,V_i), \quad \text{for $i=3,\ldots,\chi$.}
    \]
    We can thus, for $i=3,\ldots,\chi$, calculate $\RQ(f_{1i})$ analogously to $\RQ(f_{12})$, to see that
    \[
    \RQ(f_{1i}) = \frac{\chi}{\chi-1} = \lambda_N, \quad \text{for $i=2,\ldots,\chi$.}
    \]
    As a consequence, we have that
    \[
    e(V_1,V_2) = e(V_1,V_i) = e(V_i,V_j), \quad \text{for $1\leq j\leq \chi$ such that $j\neq i$.}
    \]
     We can use this to see that
    \[
    \RQ(f_{ij}) = \frac{\chi}{\chi-1} = \lambda_N, \quad \text{for $1\leq i<j\leq \chi$.}
    \]
    Now let $v\in V(G)$, and let $i$ be such that $v\in V_i$. Then, by definition of proper $\chi$-coloring, we have that $e(v,V_i) = 0$. Now consider $j\neq i$ such that $1\leq j\leq\chi$. 
    By the min-max Principle \eqref{min-max}, we have that $(\chi/(\chi-1),f_{ij})$ is an eigenpair of $L$. Equation \eqref{eq:eigenpair} gives us that
    \begin{equation*}
        \frac{1}{\chi-1}=\frac{1}{\deg v}\cdot e(v,V_j).
    \end{equation*} 
    By rewriting this, we conclude that
    \[
    e(v,V_i) = \begin{cases}
        \frac{\deg v}{\chi-1} &\text{ if } v\notin V_i,\\
        0 &\text{ if } v\in V_i.
    \end{cases} \qedhere
    \]
\end{proof}
Note that we can use our estimate of $\RQ(f_{12})$ from the proof of Theorem \ref{thm:edgespread} in combination with the min-max Principle \eqref{min-max}, to see that, for any graph with coloring number $\chi$, we have that
\[
\lambda_N\geq \RQ(f_{12})\geq \frac{\chi}{\chi-1},
\]
which gives an alternative proof of Theorem \ref{thm:3.1DS}. Furthermore, we see that $\lambda_N = \chi/(\chi-1)$ if and only if the function $f_{12}$, with the assumption from Equation \eqref{eq:e(V1,V2)}, maximizes the Rayleigh quotient.

In \cite{Abiad2019weight-regular}, Abiad gave a necessary condition for graphs for which the Hoffman bound is sharp. Such condition coincides with the one in Theorem \ref{thm:edgespread} for regular graphs.

Coutinho, Grandsire and Passos (2019) \cite{Gabriel-colouring} also prove the following in their proof of Theorem 13.
\begin{proposition}[Coutinho, Grandsire and Passos (2019) \cite{Gabriel-colouring}]\label{prop:multchi-1}
    If $\lambda_N = \chi/(\chi-1)$, then the multiplicity of the eigenvalue $\chi/(\chi-1)$ is at least $\chi-1$.
\end{proposition}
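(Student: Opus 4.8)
The plan is to produce $\chi-1$ linearly independent functions, all lying in the eigenspace of $\chi/(\chi-1)$, from which the multiplicity bound follows immediately. The crucial input, which I would read off directly from the proof of Theorem~\ref{thm:edgespread} rather than reprove, is that under the hypothesis $\lambda_N=\chi/(\chi-1)$ \emph{every} function $f_{ij}$ from Definition~\ref{def:f_ij} satisfies $\RQ(f_{ij})=\chi/(\chi-1)=\lambda_N$. Because $\lambda_N$ is the global maximum of the Rayleigh quotient over all nonzero functions, any maximizer is an eigenfunction of $L$ for $\lambda_N$; hence each $f_{ij}$ is an eigenfunction with eigenvalue $\chi/(\chi-1)$, and the whole subspace spanned by $\{f_{ij}:1\le i<j\le\chi\}$ is contained in the corresponding eigenspace.

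First I would fix a proper $\chi$-coloring with classes $V_1,\ldots,V_\chi$, which are all nonempty since $\chi$ is the chromatic number, and single out the $\chi-1$ functions $f_{12},f_{13},\ldots,f_{1\chi}$. Denoting by $\mathbf{1}_{V_i}$ the indicator of the $i$-th color class, each of these is $f_{1j}=\mathbf{1}_{V_1}-\mathbf{1}_{V_j}$. To establish their linear independence I would evaluate a hypothetical vanishing combination $\sum_{j=2}^{\chi}c_j f_{1j}$ at a vertex of $V_j$: every term with index $k\neq j$ vanishes there while the $j$-th term contributes $-c_j$, forcing $c_j=0$ for all $j$.

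With these $\chi-1$ linearly independent eigenfunctions all inside the eigenspace of $\chi/(\chi-1)$, the dimension of that eigenspace is at least $\chi-1$, which is exactly the claimed bound $m_G(\chi/(\chi-1))\ge\chi-1$. I expect no real obstacle here: the only substantive fact, namely that \emph{all} the $f_{ij}$ (not merely $f_{12}$) attain the maximal Rayleigh quotient, has already been carried out inside the proof of Theorem~\ref{thm:edgespread}, so the present argument reduces to a short independence check together with a dimension count.
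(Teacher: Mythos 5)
Your proof is correct and takes essentially the same route as the paper, which establishes this proposition by combining Theorem~\ref{thm:edgespread} with Proposition~\ref{prop:eigenfns} to exhibit the very same $\chi-1$ linearly independent eigenfunctions $f_{12},\ldots,f_{1\chi}$ for $\chi/(\chi-1)$. The only cosmetic difference is that you deduce the eigenfunction property from the Rayleigh-quotient equalities $\RQ(f_{ij})=\chi/(\chi-1)=\lambda_N$ obtained in the proof of Theorem~\ref{thm:edgespread} together with the variational characterization of $\lambda_N$, whereas the paper verifies the eigenvalue equation directly from the equitable-partition condition; both are valid.
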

Blokhuis, Brouwers and Haemers (2007, Proposition 2.3) \cite{BlokhuisBrouwerHaemers2007} proved that the same is true for $k$-regular graphs for which the Hoffman bound is sharp, which is a special case of Proposition \ref{prop:multchi-1}. They also prove that, if the multiplicity of the smallest eigenvalue of the adjacency matrix equals $\chi-1$, then the graph admits a unique $\chi$-coloring.
This result can be generalized for the bound from Equation \eqref{eq:ourbound} to apply to all graphs.
\begin{proposition}\label{prop:multlambdaN}
    If $\lambda_N=\chi/(\chi-1)$ and $\chi/(\chi-1)$ has multiplicity equal to $\chi-1$, then $G$ admits only one proper $\chi$-coloring, up to a permutation of the coloring classes.
\end{proposition}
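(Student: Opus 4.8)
The plan is to show that the eigenspace $E$ of the eigenvalue $\chi/(\chi-1)$ intrinsically determines the color partition, so that any two proper $\chi$-colorings must induce the very same partition of $V$. I would start from the fact, already established inside the proof of Theorem~\ref{thm:edgespread}, that for any fixed proper $\chi$-coloring with classes $V_1,\ldots,V_\chi$ each function $f_{ij}$ of Definition~\ref{def:f_ij} satisfies $\RQ(f_{ij})=\chi/(\chi-1)=\lambda_N$ and is therefore an eigenfunction of $L$ for $\chi/(\chi-1)$. Writing $f_{ij}=\mathbf{1}_{V_i}-\mathbf{1}_{V_j}$, the span $W_c$ of all $f_{ij}$ is exactly the sum-zero subspace of $\mathrm{span}\{\mathbf{1}_{V_1},\ldots,\mathbf{1}_{V_\chi}\}$, which has dimension $\chi-1$. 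Since $W_c\subseteq E$ and, by the multiplicity hypothesis, $\dim E=\chi-1$, I conclude $W_c=E$. The payoff of this first step is that $E$ is the same fixed subspace for every proper $\chi$-coloring.

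Next I would recast ``having the same color'' as a purely spectral condition. I define on $V$ the equivalence relation $u\equiv v$ meaning $g(u)=g(v)$ for every $g\in E$; this relation refers only to $E$ and to no coloring. I then claim that the color classes of $c$ are precisely the $\equiv$-classes. In one direction, if $u,v\in V_i$, then every $g\in E=W_c$ is a linear combination of the indicators $\mathbf{1}_{V_l}$, hence constant on $V_i$, so $g(u)=g(v)$ and $u\equiv v$. For the converse, if $u\in V_i$ and $v\in V_j$ with $i\neq j$, then $g=f_{ij}\in E$ gives $g(u)=1\neq-1=g(v)$, so $u\not\equiv v$.

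Finally, I would apply this characterization to a second proper $\chi$-coloring $c'$ with classes $V_1',\ldots,V_\chi'$. By the first step its color span $W_{c'}$ equals the same eigenspace $E$, and by the second step its classes are again exactly the $\equiv$-classes. Since $\equiv$ depends only on $E$, the partitions induced by $c$ and $c'$ coincide, which is precisely the statement that $G$ admits a unique proper $\chi$-coloring up to a permutation of the coloring classes.

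The main obstacle is the forward direction of the characterization, where I need that \emph{every} eigenfunction in $E$, not merely the distinguished functions $f_{ij}$, is constant on each color class. This is exactly where the multiplicity hypothesis is indispensable: it upgrades the automatic inclusion $W_c\subseteq E$ to the equality $W_c=E$. Without it, $E$ could contain an eigenfunction separating two vertices of the same color, so that $\equiv$ would strictly refine the coloring rather than reproduce it, and the argument would break down.
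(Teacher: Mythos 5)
Your proof is correct and follows essentially the same approach as the paper's (which defers to Proposition 2.3 of Blokhuis--Brouwer--Haemers): the multiplicity hypothesis forces the eigenspace of $\chi/(\chi-1)$ to coincide with the $(\chi-1)$-dimensional span of the $f_{ij}$'s, so a second coloring inducing a different partition would produce an eigenfunction outside that span. Your formalization via the equivalence relation determined by the eigenspace simply makes explicit the step the paper leaves as a one-line contradiction.
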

\begin{proof}
    The proof is the same as in Proposition 2.3 from \cite{BlokhuisBrouwerHaemers2007}. We fix a proper $\chi$-coloring $c$ with coloring classes $V_1,\ldots,V_\chi$. Then, for $2\leq j\leq \chi$, the functions $f_{1j}$ from Definition \ref{def:f_ij} form $\chi-1$ linearly independent eigenfunctions with eigenvalue $\chi/(\chi-1)$. If there is a proper $\chi$-coloring whose coloring classes are not a permutation of the coloring classes of $c$, then we find an eigenfunction that is not in the span of $\{f_{1j}, 2\leq j\leq \chi\}$, contradicting our assumption.
\end{proof}
In Section \ref{section:counter} we shall see that the opposite implication of Proposition \ref{prop:multlambdaN} is not true.

Another concept which is relevant for graphs for which the bound in Equation \eqref{eq:ourbound} is sharp, is the concept of twin and duplicate vertices from Definition \ref{def:twinduplicate}. Butler (2016) \cite{Butler2016} studied twins and duplicates, and the more general concept of twin subgraphs.

Note that we have the following spectral characterization of twins and duplicates.

\begin{lemma}[Special case of Theorem 4 in \cite{Butler2016}.]\label{lem:Butlerspectral}
    Let $v,w\in V$ be distinct vertices. Then, $v$ and $w$ are twins or duplicates if and only if the function $f_{v,w}$ from Definition \ref{def:f_vw} is an eigenfunction. In this case, its eigenvalue equals $1$ if $v$ and $w$ are duplicates, and $(\deg v+1)/\deg v$ if $v$ and $w$ are twins.
\end{lemma}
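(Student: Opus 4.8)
The plan is to test $f_{v,w}$ directly against the eigenpair characterization \eqref{eq:eigenpair}, which requires $(1-\lambda)f(u)=\frac{1}{\deg u}\sum_{x\sim u}f(x)$ to hold at every vertex $u$. The key simplification is that, for $f=f_{v,w}$, the sum $\sum_{x\sim u}f_{v,w}(x)$ receives a contribution of $+1$ exactly when $v$ is a neighbor of $u$, a contribution of $-1$ exactly when $w$ is a neighbor of $u$, and $0$ from every other neighbor; hence it equals the difference of the two indicators ``$u\sim v$'' and ``$u\sim w$''. Since $f_{v,w}$ is supported on $\{v,w\}$, the eigenpair condition then breaks into three families of scalar equations: the equation at $u=v$, the equation at $u=w$, and the equations at the remaining vertices $u\notin\{v,w\}$.

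First I would treat the equations away from $v$ and $w$. For such $u$ the left-hand side of \eqref{eq:eigenpair} vanishes, so the equation reduces to ``$u\sim v$ if and only if $u\sim w$''. Imposing this for all $u\notin\{v,w\}$ is precisely the requirement $N(v)\setminus\{w\}=N(w)\setminus\{v\}$ of Definition \ref{def:twinduplicate}, i.e.\ that $v$ and $w$ are twins or duplicates. This isolates the structural half of the equivalence: the off-diagonal equations hold if and only if $v,w$ are twins or duplicates, independently of the value of $\lambda$.

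Next I would read off the eigenvalue from the equations at $u=v$ and $u=w$. Using $f_{v,w}(v)=1$ and $f_{v,w}(w)=-1$, the equation at $u=v$ simplifies to $1-\lambda=-\frac{1}{\deg v}$ when $v\sim w$ and to $1-\lambda=0$ when $v\not\sim w$, while the equation at $u=w$ simplifies to $1-\lambda=-\frac{1}{\deg w}$ when $v\sim w$ and to $1-\lambda=0$ otherwise. In the duplicate case $v\not\sim w$ both equations force $\lambda=1$, and in the twin case $v\sim w$ they force $\lambda=(\deg v+1)/\deg v$ and $\lambda=(\deg w+1)/\deg w$ respectively, matching the claimed eigenvalues.

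The one point needing care, and the main (if minor) obstacle, is the consistency of these last two equations in the twin case, where they are simultaneously solvable only if $\deg v=\deg w$. I expect to dispatch this by observing that once $N(v)\setminus\{w\}=N(w)\setminus\{v\}$ holds and $v\sim w$, each of $\deg v$ and $\deg w$ equals the cardinality of the common set $N(v)\setminus\{w\}$ plus one (the edge $\{v,w\}$ itself), so the degrees coincide automatically. With this in hand both directions follow: if $v,w$ are twins or duplicates, the prescribed $\lambda$ satisfies all three families of equations, so $f_{v,w}$ is an eigenfunction with that eigenvalue; conversely, if $f_{v,w}$ is an eigenfunction, the off-diagonal equations yield the structural condition and the two remaining equations determine the eigenvalue.
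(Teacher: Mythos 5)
Your proof is correct and complete: the paper itself gives no proof of this lemma (it is quoted from Butler's Theorem 4), and your direct verification against the eigenpair characterization \eqref{eq:eigenpair} is exactly the natural argument, splitting the condition into the equations at $u\notin\{v,w\}$ (which encode $N(v)\setminus\{w\}=N(w)\setminus\{v\}$) and the equations at $v$ and $w$ (which pin down $\lambda$). You also correctly handle the one subtle point, namely that in the twin case the two eigenvalue equations are consistent because $\deg v=\deg w$ follows automatically from the common-neighborhood condition together with $v\sim w$.
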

Butler (2016) \cite{Butler2016} used this characterization to prove the following.
\begin{proposition}[Corollary $1$ in \cite{Butler2016}]\label{prop:Butlertwins}
\begin{enumerate}
    \item Let $D_i$ consist of a collection of duplicate vertices. Then there are $|D_i|-1$ eigenvalues of $1$ which come from eigenvectors restricted to $D_i$.
    \item Let $T_i$ consist of a collection of twin vertices which have common degree $d$. Then there are $|T_i|-1$ eigenvalues of $(d+1)/d$ which come from eigenvectors restricted to $T_i$.
    \end{enumerate}
\end{proposition}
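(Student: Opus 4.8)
The plan is to construct the eigenfunctions explicitly from the pairwise functions $f_{v,w}$ of Definition \ref{def:f_vw} and then count them by a linear-independence argument, invoking Lemma \ref{lem:Butlerspectral} so that I never have to re-verify the eigenvalue equation vertex by vertex. I would treat both parts in parallel, writing $D_i = \{u_1,\ldots,u_m\}$ (resp.\ $T_i = \{u_1,\ldots,u_m\}$) with $m = |D_i|$ (resp.\ $m = |T_i|$). The starting observation is that, since the collection consists of pairwise duplicates (resp.\ pairwise twins), any two of its vertices are duplicates (resp.\ twins) in the sense of Definition \ref{def:twinduplicate}. In the duplicate case I would add the remark that two non-adjacent duplicates must share the same neighborhood, so all of $D_i$ has a common degree; in the twin case the common degree $d$ is part of the hypothesis. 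This uniform degree is precisely what guarantees that Lemma \ref{lem:Butlerspectral} assigns the \emph{same} eigenvalue to every pair, namely $1$ for duplicates and $(d+1)/d$ for twins.

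Next I would fix the base vertex $u_1$ and consider the $m-1$ functions $f_{u_1,u_2},\ldots,f_{u_1,u_m}$. By Lemma \ref{lem:Butlerspectral}, each $f_{u_1,u_j}$ is an eigenfunction of $L$ with eigenvalue $1$ (duplicate case) or $(d+1)/d$ (twin case), and each is supported on $\{u_1,u_j\}$, hence on the collection; so the whole family sits among the eigenvectors ``restricted to'' $D_i$ (resp.\ $T_i$). To obtain the multiplicity I would verify that these $m-1$ functions are linearly independent: assuming $\sum_{j=2}^m c_j f_{u_1,u_j} = \mathbf{0}$ and evaluating at $u_k$ for each $k\geq 2$, only the $j=k$ term survives (with value $f_{u_1,u_k}(u_k) = -1$), forcing $c_k = 0$. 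Thus the family spans an $(m-1)$-dimensional space of eigenfunctions for a single eigenvalue, giving the claimed $|D_i|-1$ (resp.\ $|T_i|-1$) copies of $1$ (resp.\ $(d+1)/d$).

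I do not anticipate a serious obstacle: the proposition is essentially a repackaging of Lemma \ref{lem:Butlerspectral} together with a dimension count. The only point demanding care is ensuring the eigenvalue is constant across all pairs $(u_1,u_j)$, which is exactly where the common-degree property (automatic for duplicates, assumed for twins) enters; if the degrees differed, the $f_{u_1,u_j}$ would be eigenfunctions for \emph{different} eigenvalues and would not combine into a single eigenspace.

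As a sanity check, and to confirm that the count is exact rather than merely a lower bound, I would also read the conclusion off the eigenvalue equation \eqref{eq:eigenpair} directly. For a function $f$ supported on the collection, testing \eqref{eq:eigenpair} at a common neighbor of the collection forces $\sum_{j} f(u_j) = 0$, and testing it at a vertex $u_j$ of the collection then pins the eigenvalue down to $1$ in the duplicate case (where the sum over the support of the neighbors vanishes) and to $(d+1)/d$ in the twin case (where the support condition turns $\sum_{k\neq j} f(u_k)$ into $-f(u_j)$). This shows that the eigenfunctions supported on the collection form exactly the $(m-1)$-dimensional ``sum-zero'' subspace, matching the construction above.
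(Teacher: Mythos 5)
Your proposal is correct and follows exactly the route the paper indicates: it imports the result from Butler and notes only that it follows from the spectral characterization in Lemma \ref{lem:Butlerspectral}, which is precisely your argument (the $m-1$ functions $f_{u_1,u_j}$ are eigenfunctions for the common eigenvalue and are linearly independent by evaluation at $u_k$). The only point worth flagging is in your optional exactness check: in the twin case the collection need not have a common neighbor outside itself (e.g.\ $T_i=V$ for a complete graph), so the sum-zero condition should there be derived from orthogonality to the constant eigenfunction rather than from an external neighbor; this does not affect the main construction, which already proves the stated count.
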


\subsection{Graphs with largest eigenvalue \texorpdfstring{$\chi/(\chi-1)$}{chi/(chi-1)}}

We now prove some results for graphs that admit a coloring which is equitable with respect to $D^{-1}A$. As we have already seen, such graphs are relevant for Question \ref{qu:mainqu}, because for graphs which satisfy $\lambda_N = \chi/(\chi-1)$, all proper $\chi$-colorings are equitable with respect to $D^{-1}A$.

We shall first prove some results about the eigenfunctions of graphs that admit a coloring which is equitable with respect to $D^{-1}A$. Then, we shall prove some results regarding twin and duplicate vertices of graphs admitting equitable colorings with respect to $D^{-1}A$. Finally, we shall prove the main result of this section, namely Corollary \ref{cor:decomposition}, which tells us that, given a graph with largest eigenvalue $\chi/(\chi-1)$, we can remove some coloring classes of any proper $\chi$-coloring to obtain a new graph for which the bound from Equation \eqref{eq:ourbound} is also sharp.

The following result generalizes a result from the proof of Proposition 2.3 from Blokhuis, Brouwers and Haemers (2007) \cite{BlokhuisBrouwerHaemers2007}.
\begin{proposition}\label{prop:eigenfns}
Let $k\geq \chi$.
Fix a proper $k$-coloring $c$ and let $V_1,\ldots,V_k$ denote the corresponding coloring classes. If $c$ is equitable with respect to $D^{-1}A$,
then, for all $i\neq j$ such that $1\leq i,j\leq k$, the function $f_{ij}$ from Definition \ref{def:f_ij} is an eigenfunction of $L$ with corresponding eigenvalue $k/(k-1)$.
\end{proposition}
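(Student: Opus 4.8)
The plan is to verify directly, using the eigenpair characterization in Equation \eqref{eq:eigenpair}, that $f_{ij}$ satisfies the eigenvalue equation for the value $\lambda = k/(k-1)$. Substituting this $\lambda$ gives $1-\lambda = -1/(k-1)$, so it suffices to check that for every vertex $v\in V$,
\[
-\frac{1}{k-1}\, f_{ij}(v) = \frac{1}{\deg v}\sum_{w\sim v} f_{ij}(w).
\]
Since $f_{ij}$ takes the value $+1$ on $V_i$, the value $-1$ on $V_j$, and $0$ elsewhere, the sum on the right counts the neighbors of $v$ in $V_i$ minus the neighbors of $v$ in $V_j$; that is, $\sum_{w\sim v} f_{ij}(w) = e(v,V_i) - e(v,V_j)$. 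The whole proof then reduces to substituting the values of $e(v,V_i)$ and $e(v,V_j)$ supplied by the equitability hypothesis.

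First I would split into three cases according to the color class of $v$. If $v\in V_i$, then $e(v,V_i)=0$ because the coloring is proper, while $e(v,V_j)=\deg v/(k-1)$ since $v\notin V_j$; this makes the right-hand side equal to $-1/(k-1)$, matching the left-hand side, where $f_{ij}(v)=1$. The case $v\in V_j$ is symmetric, with the roles of $V_i$ and $V_j$ and the accompanying signs interchanged, yielding $+1/(k-1)$ on both sides. Finally, if $v$ lies in neither $V_i$ nor $V_j$, then both $e(v,V_i)$ and $e(v,V_j)$ equal $\deg v/(k-1)$, so the right-hand side vanishes, as does the left-hand side since $f_{ij}(v)=0$.

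Having checked the eigenvalue equation at every vertex in all three cases, I would conclude that $Lf_{ij} = (k/(k-1))\,f_{ij}$, so that $f_{ij}$ is an eigenfunction with eigenvalue $k/(k-1)$, as claimed. The computation is entirely mechanical; the only point requiring a little care is the sign bookkeeping together with the correct invocation of the proper-coloring condition $e(v,V_i)=0$ for $v\in V_i$ in the two non-trivial cases. There is no substantial obstacle here, since the equitability condition with respect to $D^{-1}A$ is precisely engineered so that each $f_{ij}$ satisfies the local eigenvalue equation \eqref{eq:eigenpair}.
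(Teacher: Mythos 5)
Your proof is correct and follows essentially the same route as the paper: both verify the eigenpair condition \eqref{eq:eigenpair} with $\lambda = k/(k-1)$ pointwise, splitting into the three cases $v\in V_i$, $v\in V_j$, and $v\notin V_i\cup V_j$, and substituting the values of $e(v,V_i)$ and $e(v,V_j)$ given by equitability. Nothing is missing.
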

\begin{proof}
By \eqref{eq:eigenpair}, $(k/(k-1),f_{ij})$ is an eigenpair for $L$ if and only if,  for all $v\in V$,
\begin{equation*}
    -\frac{1}{k-1} f_{ij}(v)=\frac{1}{\deg v}\sum_{w\sim v}f_{ij}(w).
\end{equation*}
Hence, by fixing $v_i\in V_i$, $v_j\in V_j$ and $v_0\in V\setminus \bigl(V_i\cup V_j\bigr)$, we obtain that
\begin{align*}
    -\frac1{k-1}f\bigl(v_i\bigr) &= -\frac{e\bigl(v_i,V_j\bigr)}{\deg v_i} = \frac1{\deg v_i}\sum_{w\sim v_i}f_{ij}(w);\\
    -\frac1{k-1}f\bigl(v_j\bigr) &= \frac{e\bigl(v_j,V_i\bigr)}{\deg v_j} = \frac1{\deg v_j}\sum_{w\sim v_j}f_{ij}(w);\\
    -\frac1{k-1}f\bigl(v_0\bigr) &= \frac{e\bigl(v_0,V_i\bigr)-e\bigl(v_0,V_j\bigr)}{\deg v_0} = \frac1{\deg v_0}\sum_{w\sim v_0}f_{ij}(w).
\end{align*}
We conclude that $(k/(k-1),f_{ij})$ is an eigenpair.
\end{proof}
\begin{remark}
One can use Proposition \ref{prop:eigenfns} in combination with Theorem \ref{thm:edgespread} to prove Proposition \ref{prop:multchi-1}, analogously to the proof by Blokhuis, Brouwers and Haemers (2007) \cite{BlokhuisBrouwerHaemers2007} for regular graphs.
\end{remark}

The following corollary of Proposition \ref{prop:eigenfns} concerns graphs that admit a proper $k$-coloring that is equitable with respect to $D^{-1}A$.
\begin{corollary}\label{cor:orthog}
    Let $k\geq \chi$.
    Assume that there exists a proper $k$-coloring $c$ with coloring classes $V_1,\ldots,V_k$ which is equitable with respect to $D^{-1}A$.
    Let $f$ be an eigenfunction corresponding to a non-zero eigenvalue $\lambda\neq k/(k-1)$. For all $j\in\{1,\ldots,k\}$ we have that
     \[
    \sum_{v\in V_j}\deg v f(v) = 0.
    \]
\end{corollary}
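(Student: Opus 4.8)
The plan is to use the fact, from the Courant--Fischer--Weyl setup, that $L$ is self-adjoint with respect to the inner product $\langle f,g\rangle=\sum_{v\in V}\deg v\cdot f(v)\cdot g(v)$, so that eigenfunctions corresponding to distinct eigenvalues are orthogonal. The two families of eigenfunctions I would pair $f$ against are: the constant function $\mathbf{1}$, which is the eigenfunction for the eigenvalue $0$ (indeed $L\mathbf{1}\equiv 0$ by \eqref{eq:Lf}); and the functions $f_{ij}$ from Definition \ref{def:f_ij}, which, by Proposition \ref{prop:eigenfns}, are eigenfunctions for the eigenvalue $k/(k-1)$ precisely because $c$ is equitable with respect to $D^{-1}A$. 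The hypotheses $\lambda\neq 0$ and $\lambda\neq k/(k-1)$ are exactly what guarantee that $f$ is orthogonal to both families.

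Concretely, I would abbreviate $S_i:=\sum_{v\in V_i}\deg v\cdot f(v)$, so that the goal becomes showing $S_j=0$ for every $j$. First I would compute, for any $i\neq j$,
\[
\langle f,f_{ij}\rangle=\sum_{v\in V_i}\deg v\cdot f(v)-\sum_{v\in V_j}\deg v\cdot f(v)=S_i-S_j,
\]
since $f_{ij}$ is supported on $V_i\cup V_j$ with values $+1$ and $-1$ there. Because $\lambda\neq k/(k-1)$, orthogonality of $f$ to the eigenfunction $f_{ij}$ forces $S_i=S_j$. As $i$ and $j$ were arbitrary, all of the $S_j$ share a common value, say $S$.

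To pin down $S$, I would invoke the second orthogonality relation. Since $\lambda\neq 0$, the eigenfunction $f$ is orthogonal to $\mathbf{1}$, giving
\[
0=\langle f,\mathbf{1}\rangle=\sum_{v\in V}\deg v\cdot f(v)=\sum_{j=1}^{k}S_j=kS,
\]
whence $S=0$ and therefore $S_j=0$ for all $j$. I do not expect a genuine obstacle here: the content lies entirely in recognizing these two orthogonality relations and seeing that both excluded eigenvalues appear for a reason. The only point requiring slight care is to work throughout with the degree-weighted inner product rather than the standard one, so that the weights $\deg v$ in the statement match those built into $\langle\cdot,\cdot\rangle$; the rest is a short linear-algebra computation.
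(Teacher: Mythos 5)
Your proposal is correct and follows essentially the same route as the paper: orthogonality of $f$ to the eigenfunctions $f_{ij}$ (valid since $\lambda\neq k/(k-1)$, via Proposition \ref{prop:eigenfns}) gives equality of all the sums $S_j$, and orthogonality to the constant eigenfunction of $0$ (valid since $\lambda\neq 0$) forces their common value to vanish. The only cosmetic difference is that the paper pairs $f$ against $f_{1j}$ for $j=2,\ldots,k$ rather than against all $f_{ij}$, which changes nothing.
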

\begin{proof}
   Since $\lambda\neq k/(k-1)$, for each $j\in\{2,\ldots,k\}$ we have that  $f$ and $f_{1j}$ are orthogonal, implying that
    \begin{equation}\label{eq:f_{ij}}
    \sum_{v\in V_1} \deg v f(v) = \sum_{v\in V_j}\deg v f(v).
    \end{equation}
   Moreover, since the constant functions are the eigenfunctions of the eigenvalue $0$, we also have that $f$ is orthogonal to the constant functions, implying that
    \begin{equation}\label{eq:eigenv0}
    \sum_{v\in V}\deg v f(v) = 0.
    \end{equation}
    Combining \eqref{eq:f_{ij}}  and \eqref{eq:eigenv0}  proves the claim.
\end{proof}

We shall now prove a proposition about graphs with largest eigenvalue $\chi/(\chi-1)$ regarding their twin and duplicate vertices (cf.\ Definition \ref{def:twinduplicate}), which follows from Theorem \ref{thm:edgespread} and Lemma \ref{lem:Butlerspectral}, both of which are due to Butler (2016) \cite{Butler2016}.
\begin{proposition}\label{prop:twinduplicate}
    Assume that $\lambda_N=\chi/\bigl(\chi-1\bigr)$.
    
    \begin{enumerate}[label=(\roman*)]
    \item If $v_1\sim v_2$ are twin vertices, then 
    \[
    \deg v_1 = \deg v_2 = \chi-1.
    \]
    Moreover, the function $f_{v_1,v_2}$ from Definition \ref{def:f_vw} is an eigenfunction with eigenvalue $\chi/(\chi-1)$.
    \item If $w_1\not\sim w_2$ are duplicate vertices, then for any proper $\chi$-coloring, $w_1$ and $w_2$ must be in the same coloring class.
    \end{enumerate}
    
\end{proposition}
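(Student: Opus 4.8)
The plan is to reduce both parts to the equitability property guaranteed by Theorem~\ref{thm:edgespread}, combined with the spectral characterization of twins and duplicates in Lemma~\ref{lem:Butlerspectral} and the elementary standing assumption that every vertex has positive degree. The common engine is this: fix any proper $\chi$-coloring with classes $V_1,\ldots,V_\chi$; by Theorem~\ref{thm:edgespread} it is equitable with respect to $D^{-1}A$, so for every vertex $u\in V_a$ and every $b\neq a$ we have $e(u,V_b)=\deg u/(\chi-1)$. I would then read off the left-hand side $e(u,V_b)$ directly from the twin/duplicate structure and compare it against this forced value.

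For part (i), I would fix a proper $\chi$-coloring and note that, since $v_1\sim v_2$, they lie in distinct classes, say $v_1\in V_i$ and $v_2\in V_j$ with $i\neq j$. The twin condition gives $N(v_1)\setminus\{v_2\}=N(v_2)\setminus\{v_1\}$; every vertex $u$ in this common neighborhood satisfies $u\sim v_2$, so properness forces $u\notin V_j$. Hence the only neighbor of $v_1$ lying in $V_j$ is $v_2$ itself, i.e.\ $e(v_1,V_j)=1$. Equitability then yields $\deg v_1/(\chi-1)=1$, so $\deg v_1=\chi-1$, and since twins share a common degree, $\deg v_2=\chi-1$ as well. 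The ``moreover'' clause is then immediate from Lemma~\ref{lem:Butlerspectral}: the function $f_{v_1,v_2}$ is an eigenfunction with eigenvalue $(\deg v_1+1)/\deg v_1=\chi/(\chi-1)$.

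For part (ii) I would argue by contradiction. Suppose some proper $\chi$-coloring places the duplicates in different classes, $w_1\in V_i$ and $w_2\in V_j$ with $i\neq j$. Since $w_1\not\sim w_2$, the duplicate condition is exactly $N(w_1)=N(w_2)$. Because $w_1\in V_i$ and the coloring is proper, no neighbor of $w_1$ lies in $V_i$; transporting this through $N(w_2)=N(w_1)$ shows that no neighbor of $w_2$ lies in $V_i$ either, i.e.\ $e(w_2,V_i)=0$. But equitability forces $e(w_2,V_i)=\deg w_2/(\chi-1)$, which is strictly positive because $\deg w_2\geq 1$. This contradiction shows $i=j$, so $w_1$ and $w_2$ share a class in every proper $\chi$-coloring.

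I expect the only delicate point to be the neighborhood bookkeeping that converts the set-theoretic twin/duplicate definitions into the edge counts $e(v_1,V_j)$ and $e(w_2,V_i)$: specifically, checking in the twin case that the mutual edge $\{v_1,v_2\}$ is the \emph{sole} contribution to $e(v_1,V_j)$ (because all common neighbors are adjacent to $v_2$ and hence excluded from $V_j$), and in the duplicate case that passing from $N(w_1)$ to $N(w_2)$ preserves the emptiness of the intersection with $V_i$. Once these counts are pinned down, Theorem~\ref{thm:edgespread}, Lemma~\ref{lem:Butlerspectral}, and the no-isolated-vertex assumption finish the argument with no further computation and no direct appeal to the min-max principle.
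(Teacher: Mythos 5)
Your proof is correct, and both parts ultimately rest on the same pillar as the paper's proof, namely the equitability of every proper $\chi$-coloring guaranteed by Theorem~\ref{thm:edgespread}; but the way you deploy it differs in two places. For part (i) the counting step is identical (the twin structure forces $e(v_1,V_j)=1$, hence $\deg v_1/(\chi-1)=1$), but for the ``moreover'' clause the paper verifies directly that $\RQ(f_{v_1,v_2})=\chi/(\chi-1)$ and invokes the fact that a function attaining the maximal Rayleigh quotient is an eigenfunction, whereas you simply quote Lemma~\ref{lem:Butlerspectral} to get the eigenvalue $(\deg v_1+1)/\deg v_1=\chi/(\chi-1)$; your route is slightly cleaner since it reuses a lemma already stated and avoids any computation. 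For part (ii) the mechanisms genuinely diverge: the paper recolors $w_2$ with the color of $w_1$ to produce a second proper $\chi$-coloring $c'$ and then argues that $c'$ violates the (universally quantified) conclusion of Theorem~\ref{thm:edgespread}, while you stay inside the given coloring and derive the contradiction $0=e(w_2,V_i)=\deg w_2/(\chi-1)>0$ directly from $N(w_1)=N(w_2)$ and properness. Your version is more self-contained, since it does not require checking that $c'$ is proper or identifying which equitability equation $c'$ breaks; the paper's version has the small advantage of illustrating how the ``every coloring'' strength of Theorem~\ref{thm:edgespread} can be exploited. The only bookkeeping points you flagged (that the mutual edge is the sole contribution to $e(v_1,V_j)$, and that $v\not\sim w$ makes the duplicate condition literally $N(w_1)=N(w_2)$) are exactly the right ones, and both check out.
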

\begin{proof}
\begin{enumerate}[label=(\roman*)]
    \item
    Let $V_1,\ldots,V_\chi$ be the coloring classes of $G$ with respect to a fixed proper $\chi$-coloring. Without loss of generality, we may assume that $v_1\in V_1$ and $v_2\in V_2$. Since $v_1$ and $v_2$ are twin vertices, we must have
    \[
    e(v_1,V_2) = e(v_2,V_1)=1,
    \]
    from which it follows by Theorem \ref{thm:edgespread} that
    \[
    \deg v_1 = \deg v_2 = \chi-1.
    \]
    Moreover, since one can easily check that $\RQ(f_{v_1,w_1})=\chi/(\chi-1)$, we have that  the function $f_{v_1,v_2}$ is an eigenfunction with eigenvalue $\chi/(\chi-1)$. 
    
    \item
    Let $c$ be a proper $\chi$-coloring and assume, by contradiction, that $w_1$ and $w_2$ are not in the same coloring class. Then, the proper $\chi$-coloring $c'$ defined by
    \[
    c'(v) \coloneqq \begin{cases}
        c(w_1) &\text{ if } v = w_2,\\
        c(v) &\text{ otherwise,}
    \end{cases}
    \]
    does not satisfy the statement of Theorem \ref{thm:edgespread}, which is a contradiction.\qedhere
    
\end{enumerate}
\end{proof}

We can use Proposition \ref{prop:Butlertwins} from Butler (2016) \cite{Butler2016} to find upper and lower bounds for the multiplicity of $\chi/(\chi-1)$.
\begin{proposition}\label{prop:multiplicity}
    Assume that $\lambda_N=\chi/\bigl(\chi-1\bigr)$, and fix a proper $\chi$-coloring with coloring classes $V_1,V_2,\ldots, V_\chi$. For given $j\geq0$ and $k\geq 0$, let
    \[
    D_1,\ldots,D_j\subseteq V(G) \quad \text{ and } \quad T_1,\ldots,T_k\subseteq V(G) 
    \]
    be mutually disjoint sets, where $D_1,\ldots,D_j$ form a collection of duplicate vertices, while $T_1,\ldots,T_k$ form a collection of twin vertices. Additionally, fix the smallest $y$ such that
    \[
    \bigcup_{i=1}^jT_i \subseteq V_1\cup\cdots \cup V_y,
    \]
    where we change the order of the coloring classes $V_i$ if necessary, and we let $y=0$ if $k=0$.
    Then, 
    \begin{equation}\label{eq:mult-lambdaN}
    \sum_{i=1}^j|T_i|-j+\chi-y\leq m_G\left(\frac{\chi}{\chi-1}\right)\leq N - \sum_{i=1}^k|D_i| + k - 1.
     \end{equation}
    Furthermore, the upper bound is tight if and only if $\bigcup_{i=1}^kD_i=V(G)$ and $G$ is a complete multipartite graph with partition classes of equal size.
\end{proposition}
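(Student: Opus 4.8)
The plan is to prove the two bounds separately and then determine when the upper one is an equality. For the lower bound I would build a large eigenspace for $\chi/(\chi-1)$ out of two families. Because $\lambda_N=\chi/(\chi-1)$, Theorem~\ref{thm:edgespread} makes every proper $\chi$-coloring equitable with respect to $D^{-1}A$, so Proposition~\ref{prop:eigenfns} (applied with $k=\chi$) shows that the functions $f_{ij}$ are eigenfunctions for $\chi/(\chi-1)$ and span the $(\chi-1)$-dimensional space $U=\{\sum_{i=1}^\chi c_i\mathbf 1_{V_i}:\sum_i c_i=0\}$ of class-constant, coefficient-sum-zero functions. On the other hand, by Proposition~\ref{prop:twinduplicate}(i) the vertices of each twin collection $T_i$ have degree $\chi-1$, so Lemma~\ref{lem:Butlerspectral} (equivalently Proposition~\ref{prop:Butlertwins}(2)) yields $|T_i|-1$ eigenfunctions for $\chi/(\chi-1)$ supported on $T_i$; as the $T_i$ are disjoint these span a space $W$ of dimension $\sum_i|T_i|-k$ supported on $V_1\cup\cdots\cup V_y$. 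Since $W+U$ lies in the eigenspace of $\chi/(\chi-1)$, it suffices to estimate $\dim(W\cap U)$: a function of $U$ supported on $V_1\cup\cdots\cup V_y$ must have $c_i=0$ for $i>y$, and these form a $(y-1)$-dimensional subspace of $U$ containing $W\cap U$. Hence $m_G(\chi/(\chi-1))\ge\dim(W+U)\ge(\sum_i|T_i|-k)+(\chi-1)-(y-1)$, which is the lower bound.

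For the upper bound I would instead count eigenfunctions of the remaining eigenvalues. Each duplicate collection $D_i$ contributes $|D_i|-1$ eigenfunctions for the eigenvalue $1$ by Proposition~\ref{prop:Butlertwins}(1), and these are independent across the disjoint $D_i$, so $m_G(1)\ge\sum_i|D_i|-j$. As $\chi\ge2$ gives $\chi/(\chi-1)=1+\tfrac1{\chi-1}>1$, the eigenvalue $1$ is distinct from $\chi/(\chi-1)$, and so is the eigenvalue $0$, whose multiplicity is $1$ because $G$ is connected. Subtracting both contributions from the total $N$ gives $m_G(\chi/(\chi-1))\le N-(\sum_i|D_i|-j)-1$.

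Tracing these estimates, equality in the upper bound holds exactly when $m_G(1)=\sum_i|D_i|-j$ and the spectrum is contained in $\{0,1,\chi/(\chi-1)\}$. The reverse implication is then immediate: for $T(N,\chi)$ with the $D_i$ chosen as its $\chi$ parts, the spectrum recorded in Section~\ref{subsec:listofgphs} gives $m_G(\chi/(\chi-1))=\chi-1=N-\sum_i|D_i|+j-1$. For the forward implication I would use that the eigenvalue-$1$ space of $L$ equals $\ker A$, and that the maximal duplicate classes $C_1,\dots,C_p$ of $G$ already supply $N-p$ independent eigenfunctions for $1$. Since each $D_i$ lies in a single class, $\sum_i(|D_i|-1)\le N-p\le m_G(1)$; equality of the outer terms, forced by tightness, collapses both inequalities, so the $D_i$ are exactly the $C_s$, $\bigcup_iD_i=V$, and $m_G(1)=N-p$. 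Thus $V$ is partitioned into duplicate classes and $G$ is the blow-up of a simple quotient graph $H$ on $p$ vertices, each class being an independent set joined completely to another class precisely when the two classes are adjacent.

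The crux is to show that $H$ is complete and that the parts are balanced. Because $m_G(1)=N-p$, all eigenvalue-$1$ functions are the within-class zero-sum functions, so the class-constant part of $C(V)$ carries only the quotient operator $\tilde P$, and its $L$-eigenvalues must lie in $\{0,\chi/(\chi-1)\}$. Writing $\tilde L=I-\tilde P=\tfrac{\chi}{\chi-1}(I-\Pi)$, where $\Pi$ is the orthogonal projection onto constants for the stationary inner product, the off-diagonal entries satisfy $\tilde P_{st}=\tfrac{\chi}{\chi-1}\Pi_{st}>0$ for $s\ne t$; since $\tilde P_{st}>0$ forces the classes $s,t$ to be adjacent, $H$ is complete. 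Thus $G$ is complete multipartite with $p$ parts, so $p=\chi$, and the equitable identity $e(v,V_b)=\deg v/(\chi-1)$ together with $\deg v=N-|V_a|$ for $v\in V_a$ gives $|V_b|=(N-|V_a|)/(\chi-1)$, independent of $b\ne a$; hence all parts have equal size. The main obstacle is exactly this last passage: the spectrum $\{0,1,\chi/(\chi-1)\}$ with these multiplicities need not determine $G$ up to isomorphism, and it is the extra input that $\ker A$ is spanned by duplicate differences covering all of $V$, together with the positivity of $\Pi$, that forces the complete multipartite shape.
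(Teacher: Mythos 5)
Your proposal is correct and follows essentially the same route as the paper's (very terse) proof: the lower bound from the $f_{ij}$'s of Proposition~\ref{prop:eigenfns} together with the twin differences of Proposition~\ref{prop:Butlertwins}, the upper bound from the duplicate differences plus the eigenvalue $0$, and the tightness characterization by forcing the spectrum into $\{0,1,\chi/(\chi-1)\}$ with all eigenvalue-$1$ eigenfunctions coming from duplicates\,---\,your quotient-operator argument simply makes rigorous the paper's one-line claim that this forces a balanced complete multipartite graph. The only caveat is the degenerate case $k=0$, $y=0$, where your step $\dim(W\cap U)\leq y-1$ fails (the intersection is $\{0\}$, of dimension $0$, not $-1$) and the stated lower bound $\chi$ would exceed the true multiplicity $\chi-1$ of, say, the Tur\'an graph; this off-by-one is inherited from the proposition's convention and affects the statement as much as your proof.
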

\begin{proof}
    The proof of the inequalities is straightforward, using Proposition \ref{prop:Butlertwins} and Proposition \ref{prop:eigenfns}. Furthermore, one can check that the upper bound is tight if and only if $G$ has spectrum
    \[
    \biggl\{ 0^{(1)},1^{(N-\chi)},\frac{\chi}{\chi-1}^{(\chi-1)}\biggr\},
    \]
    and all eigenfunctions with eigenvalue $1$ must come from pairs of duplicate vertices. This is only possible if $G$ is a complete multipartite graph with $\chi$ partition classes of the same size $N/\chi$.
\end{proof}

Examples of graphs for which the lower bound from Proposition \ref{prop:multiplicity} is tight include complete multipartite graphs with partition classes of the same size, complete graphs, bipartite graphs, and petal graphs (cf.\ Section \ref{subsec:listofgphs} and \cite{JMZ23}).

We now prove the following theorem, concerning the support of functions $f\colon V\to \R$.
\begin{theorem}\label{thm:support}
    Let $G$ be a graph with coloring number $\chi$, and let $V_1,\ldots,V_\chi$ be the coloring classes with respect to a fixed proper $\chi$-coloring $c$. Assume that $c$ is equitable with respect to $D^{-1}A$.
    Moreover, let $f\colon V\to \R$ be a function such that $\supp(f)\subseteq \bigcup_{i\in I}V_i$, for some set $I\subseteq \{1,\ldots,\chi\}$ with $|I|\geq 1$. Then, we have that
    \[
    \RQ(f) = \frac{|I|-1}{\chi-1}\RQ_{G\left(\bigcup_{i\in I}V_i\right)}\left(f_{G\left(\bigcup_{i\in I}V_i\right)}\right) +\frac{\chi-|I|}{\chi-1}.
    \]
\end{theorem}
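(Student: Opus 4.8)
The plan is to split the Rayleigh quotient of $f$ over $G$ according to whether an edge stays inside $U \coloneqq \bigcup_{i\in I}V_i$ or crosses into $W\coloneqq V\setminus U$, and then use the equitability hypothesis to reconcile the two. Write $H\coloneqq G(U)$ for the induced subgraph and $f_H$ for the restriction of $f$ to $U$. Since $\supp(f)\subseteq U$, the function $f$ vanishes on $W$, so in the numerator of $\RQ(f)$ each edge inside $W$ contributes $0$, each edge inside $U$ contributes $\bigl(f(v)-f(w)\bigr)^2$ (which is exactly the numerator of $\RQ_H(f_H)$), and each edge between $U$ and $W$ contributes $f(v)^2$ for its endpoint $v\in U$. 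Grouping the crossing edges by their endpoint in $U$, the numerator becomes
\[
\sum_{\substack{v\sim w\\ v,w\in U}}\bigl(f(v)-f(w)\bigr)^2 + \sum_{v\in U} e(v,W)\, f(v)^2,
\]
while the denominator of $\RQ(f)$ is simply $S\coloneqq \sum_{v\in U}\deg v\cdot f(v)^2$, again because $f$ vanishes off $U$.

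Next I would feed in the assumption that $c$ is equitable with respect to $D^{-1}A$ (Definition \ref{def:equitable}), which gives $e(v,V_i)=\deg v/(\chi-1)$ whenever $v\notin V_i$. For $v\in V_{i_0}$ with $i_0\in I$, summing this over the $\chi-|I|$ classes outside $I$ yields $e(v,W)=(\chi-|I|)\deg v/(\chi-1)$, and summing over the $|I|-1$ classes of $I$ other than $V_{i_0}$ yields the induced degree $\deg_H v=(|I|-1)\deg v/(\chi-1)$. The first identity turns the crossing term into $\tfrac{\chi-|I|}{\chi-1}S$, and the second shows that the denominator of $\RQ_H(f_H)$, namely $\sum_{v\in U}\deg_H v\cdot f(v)^2$, equals $\tfrac{|I|-1}{\chi-1}S$. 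Dividing the numerator above by $S$ then gives
\[
\RQ(f)=\frac{1}{S}\sum_{\substack{v\sim w\\ v,w\in U}}\bigl(f(v)-f(w)\bigr)^2+\frac{\chi-|I|}{\chi-1},
\]
and since $\sum_{\substack{v\sim w,\,v,w\in U}}\bigl(f(v)-f(w)\bigr)^2=\RQ_H(f_H)\cdot\tfrac{|I|-1}{\chi-1}S$, the first summand equals $\tfrac{|I|-1}{\chi-1}\RQ_H(f_H)$, yielding the claimed identity.

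The only genuinely delicate point is the degenerate case $|I|=1$: then $U=V_{i_0}$ is an independent set, $H$ has no edges, and $\RQ_H(f_H)$ is a $0/0$ expression. Here the coefficient $(|I|-1)/(\chi-1)$ vanishes, so the first term is read as $0$, and a direct check confirms the formula\,---\,every edge from $v\in U$ leaves $U$, so $e(v,W)=\deg v$, the numerator equals $S$, and $\RQ(f)=1=(\chi-|I|)/(\chi-1)$. I would therefore carry out the main computation under $|I|\geq 2$ and dispose of $|I|=1$ in a line. No step is technically hard; the crux is recognizing that equitability forces both the crossing-degree $e(v,W)$ and the induced degree $\deg_H v$ to be the \emph{same} constant multiple of $\deg_G v$, which is precisely what lets the single denominator $S$ factor out of every term.
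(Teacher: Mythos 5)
Your proposal is correct and follows essentially the same route as the paper's proof: decompose the Rayleigh quotient into the within-$\bigcup_{i\in I}V_i$ part and the crossing part, then use equitability to express both the induced degree and the crossing count as the constants $\tfrac{|I|-1}{\chi-1}\deg v$ and $\tfrac{\chi-|I|}{\chi-1}\deg v$. Your explicit treatment of the degenerate case $|I|=1$ is a small extra care the paper glosses over, but otherwise the two arguments coincide.
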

\begin{proof}
    The claim follows by the following calculation, in which we let $W_1\coloneqq \bigcup_{i\in I}V_i$ and $W_2\coloneqq V\setminus W_1$.
    \begin{align*}
        \RQ_G(f) &= \frac{\sum_{\substack{v,w\in W_1\\v\sim w}}\left(f(v)-f(w)\right)^2}{\sum_{v\in W_1}f(v)^2\deg v} + \frac{\sum_{\substack{v\in W_1\\w\in W_2\\v\sim w}}f(v)^2}{\sum_{v\in W_1}f(v)^2\deg v}\\
        &= \frac{|I|-1}{\chi-1}\frac{\sum_{\substack{v\in V_1\\w\in V_2\\v\sim w}}\left(f(v)-f(w)\right)^2}{\sum_{v\in W_1}f(v)^2\frac{(|I|-1)\deg v}{\chi-1}} + \frac{\sum_{v\in W_1}\frac{(\chi-|I|)\deg v}{\chi-1}f(v)^2}{\sum_{v\in W_1}f(v)^2\deg v}\\
        &= \frac{|I|-1}{\chi-1}\RQ_{G(W_1)}\left(f_{G(W_1)}\right) +\frac{\chi-|I|}{\chi-1}.\qedhere
    \end{align*}
\end{proof}
Theorem \ref{thm:support} has the following immediate corollary.
\begin{corollary}
    Let $G$ be a graph with coloring number $\chi$, and let $V_1,\ldots,V_\chi$ be the coloring classes with respect to a fixed proper $\chi$-coloring $c$. Assume that $c$ is equitable with respect to $D^{-1}A$.
    Moreover, let $f\colon V\to \R$ be a function such that $\supp(f)\subseteq \bigcup_{i\in I}V_i$, for some set $I\subseteq \{1,\ldots,\chi\}$ with $|I|\geq 1$. Then, we have that
    \[
    \RQ(f) \leq \frac{(|I|-1)\lambda_{\max}\bigl(G\left(\bigcup_{i\in I}V_i\right)\bigr) + \chi-|I|}{\chi-1}.
    \]
\end{corollary}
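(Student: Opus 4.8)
The plan is to read the inequality off Theorem~\ref{thm:support} directly, with the only additional ingredient being the variational characterization of the largest eigenvalue. Writing $W_1 \coloneqq \bigcup_{i\in I}V_i$ and assuming $f\neq\mathbf{0}$ (so that $\RQ(f)$ is defined), Theorem~\ref{thm:support} supplies the exact identity
\[
\RQ_G(f) = \frac{|I|-1}{\chi-1}\,\RQ_{G(W_1)}\bigl(f_{G(W_1)}\bigr) + \frac{\chi-|I|}{\chi-1},
\]
so the corollary will follow as soon as the Rayleigh quotient on the induced subgraph $G(W_1)$ is bounded above by its own largest normalized Laplacian eigenvalue.

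First I would invoke the min-max Principle in the form \eqref{min-max2}, applied to the normalized Laplacian of $G(W_1)$. Since $\lambda_{\max}\bigl(G(W_1)\bigr)$ is by definition the maximum of the Rayleigh quotient over all nonzero functions on $V(W_1)=W_1$, and $f_{G(W_1)}$ is one such function, we immediately obtain
\[
\RQ_{G(W_1)}\bigl(f_{G(W_1)}\bigr) \leq \lambda_{\max}\bigl(G(W_1)\bigr).
\]
I would then substitute this into the identity above. The one point deserving a moment of care is that the coefficient $\tfrac{|I|-1}{\chi-1}$ must be nonnegative in order to preserve the direction of the inequality; this holds because $|I|\geq 1$ and $\chi\geq 2$. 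Multiplying the displayed bound by this nonnegative factor and adding $\tfrac{\chi-|I|}{\chi-1}$ yields precisely
\[
\RQ(f) \leq \frac{(|I|-1)\lambda_{\max}\bigl(G(W_1)\bigr) + \chi-|I|}{\chi-1},
\]
which is the claimed estimate.

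There is no genuine obstacle here: the substance lies entirely in Theorem~\ref{thm:support}, and the corollary amounts to replacing the subgraph Rayleigh quotient by its variational upper bound. The only degenerate case, $|I|=1$, is harmless, since then the coefficient $\tfrac{|I|-1}{\chi-1}$ vanishes and the bound reduces to $\RQ(f)\leq 1$, so the (otherwise ill-defined, edgeless) quantity $\lambda_{\max}\bigl(G(W_1)\bigr)$ never needs to be evaluated.
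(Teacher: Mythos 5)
Your argument is correct and is exactly the route the paper intends: the corollary is stated as an immediate consequence of Theorem~\ref{thm:support}, obtained by bounding $\RQ_{G(W_1)}\bigl(f_{G(W_1)}\bigr)$ by $\lambda_{\max}\bigl(G(W_1)\bigr)$ via the min-max Principle and noting the coefficient $\tfrac{|I|-1}{\chi-1}$ is nonnegative. Your extra remarks on the sign of the coefficient and the degenerate case $|I|=1$ are sensible but do not change the approach.
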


Theorem \ref{thm:support} also has the following corollary. A version of this corollary for the Hoffman bound was first proposed by Van Veluw (2024) \cite{ThijsThesis}.

\begin{corollary}\label{cor:decomposition}
    Let $G$ be a graph with coloring number $\chi\geq2$ such that $\lambda_{\max}(G) = \chi/(\chi-1)$. Let $V_1,\ldots,V_\chi$ be the coloring classes with respect to some fixed proper $\chi$-coloring. Then, for any subset $I\subseteq\{1,\ldots,\chi\}$ with $|I|\geq 2$,
    we have that
    \[
    \lambda_{\max}\biggl(G\left(\bigcup_{i\in I}V_i\right)\biggr) = \frac{|I|}{|I|-1} = \frac{\chi\biggl(G\left(\bigcup_{i\in I}V_i\right)\biggr)}{\chi\biggl(G\left(\bigcup_{i\in I}V_i\right)\biggr)-1}.
    \]
\end{corollary}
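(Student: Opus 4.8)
The plan is to derive Corollary~\ref{cor:decomposition} directly from Theorem~\ref{thm:support}. First, I would fix a subset $I\subseteq\{1,\ldots,\chi\}$ with $|I|\geq 2$ and set $W\coloneqq\bigcup_{i\in I}V_i$. The key observation is that the restriction $c|_W$ of the proper $\chi$-coloring to the induced subgraph $G(W)$ is a proper coloring using exactly the $|I|$ color classes $\{V_i\}_{i\in I}$, so $\chi(G(W))\leq|I|$. Applying Theorem~\ref{thm:3.1DS} to $G(W)$ gives the lower bound $\lambda_{\max}(G(W))\geq \chi(G(W))/(\chi(G(W))-1)\geq |I|/(|I|-1)$, where the second inequality uses that $t\mapsto t/(t-1)$ is decreasing together with $\chi(G(W))\leq|I|$.

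The matching upper bound is where Theorem~\ref{thm:support} does the work. Let $g$ be an eigenfunction of $G(W)$ attaining $\lambda_{\max}(G(W))$, and extend it by zero to a function $f\colon V\to\R$ with $\supp(f)\subseteq W$. Since $c$ is equitable with respect to $D^{-1}A$ (this holds by Theorem~\ref{thm:edgespread}, because $\lambda_{\max}(G)=\chi/(\chi-1)$), Theorem~\ref{thm:support} applies to $f$ and yields
\begin{equation*}
\RQ_G(f) = \frac{|I|-1}{\chi-1}\,\lambda_{\max}(G(W)) + \frac{\chi-|I|}{\chi-1}.
\end{equation*}
On the other hand, the min-max characterization of $\lambda_N$ (the Rayleigh-quotient formula in \eqref{min-max2}) gives $\RQ_G(f)\leq \lambda_{\max}(G)=\chi/(\chi-1)$. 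Combining these, solving the resulting inequality for $\lambda_{\max}(G(W))$, and simplifying gives exactly $\lambda_{\max}(G(W))\leq |I|/(|I|-1)$.

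Putting the two bounds together forces $\lambda_{\max}(G(W))=|I|/(|I|-1)$. To finish, I must also identify $\chi(G(W))$ precisely: the chain of inequalities $|I|/(|I|-1)\leq \chi(G(W))/(\chi(G(W))-1)\leq \lambda_{\max}(G(W))=|I|/(|I|-1)$ collapses, and since $t\mapsto t/(t-1)$ is strictly decreasing on $(1,\infty)$, the equality $\chi(G(W))/(\chi(G(W))-1)=|I|/(|I|-1)$ forces $\chi(G(W))=|I|$. This yields all three equalities in the statement simultaneously.

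The main obstacle I anticipate is not any single computation but rather ensuring the eigenvalue inequality is correctly oriented when extending $g$ to $f$: one must check that plugging the zero-extended optimizer into $\RQ_G$ is legitimate (i.e.\ $f\not\equiv 0$, which holds since $|I|\geq 2$ guarantees $G(W)$ has at least one edge and hence a nonconstant $g$) and that the quantity $\RQ_{G(W)}(f_{G(W)})$ appearing in Theorem~\ref{thm:support} is genuinely $\lambda_{\max}(G(W))$ rather than some smaller Rayleigh quotient. Everything else is bookkeeping with the strictly-monotone function $t/(t-1)$.
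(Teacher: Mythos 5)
Your proposal is correct and follows essentially the same route as the paper: extend the Rayleigh-quotient maximizer of $G\left(\bigcup_{i\in I}V_i\right)$ by zero, apply Theorem~\ref{thm:support} together with $\RQ_G(f)\leq\lambda_{\max}(G)=\chi/(\chi-1)$ to get the upper bound, and use Theorem~\ref{thm:3.1DS} for the lower bound. Your explicit verification that $\chi\bigl(G\left(\bigcup_{i\in I}V_i\right)\bigr)=|I|$ via monotonicity of $t\mapsto t/(t-1)$ is a welcome bit of extra care that the paper's proof leaves implicit.
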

\begin{proof}
    Let $1\leq i\leq \chi$ and let $G_I\coloneqq G\left(\bigcup_{i\in I}V_i\right)$. Let $f_I$ be such that $\RQ(f_I) = \lambda_{\max}\bigl(G_I\bigr)$, and define $f\colon V(G)\to\R$ by
    \[
    f(v)\coloneqq \begin{cases}
        f_I(v), &\text{ if } v\in V\bigl(G_I\bigr),\\
        0, &\text{ otherwise.}
    \end{cases}
    \]
    Then, by Theorem \ref{thm:support} we have that
    \begin{align*}
        \frac{\chi}{\chi-1} &\geq \RQ_G(f) \\
        &= \frac{|I|-1}{\chi-1}\cdot\RQ_{G_I}(f_I)+\frac{\chi-|I|}{\chi-1}\\
        &=  \frac{|I|-1}{\chi-1}\cdot\lambda_{\max}(G_I) +\frac{\chi-|I|}{\chi-1}.
    \end{align*}
    This implies that
    \begin{align*}
        \chi &\geq \bigl(|I|-1\bigr)\cdot\lambda_{\max}\bigl(G_I\bigr) + \chi-|I|,
    \end{align*}
    which is equivalent to
    \[
    \lambda_{\max}\bigl(G_I\bigr) \leq \frac{|I|}{|I|-1} = \frac{\chi\bigl(G_I\bigr)}{\chi\bigl(G_I\bigr)-1}.
    \]
    Since we also have the opposite inequality by the bound \eqref{eq:ourbound}, we can conclude that
    \[
    \lambda_{\max}(G_I) =\frac{\chi(G_I)}{\chi(G_I)-1}.\qedhere
    \]
\end{proof}
\begin{remark}
    Assume that we are in the setting of Corollary \ref{cor:decomposition} and its proof.
    Then, the function $f$ in the proof is an eigenfunction of $G$ with eigenvalue $\chi/(\chi-1)$. Hence, we have that
    \[
    m_{G}\biggl(\frac{\chi}{\chi-1}\biggr) \geq m_{G_I}\biggl(\frac{|I|}{|I|-1}\biggr) + \chi - |I|.
    \]
\end{remark}
We also have the following result about eigenfunctions of graphs that admit an equitable $\chi$-coloring.
\begin{proposition}
    Let $G$ be a graph with coloring number $\chi$, and let, for some $k\geq \chi$, $V_1,\ldots,V_k$ be the coloring classes with respect to a fixed proper $k$-coloring $c$. Assume that $c$ is equitable with respect to $D^{-1}A$.
    Moreover, let $f\colon V\to \R$ be an eigenfunction with corresponding eigenvalue $\lambda$, such that $\supp(f)\subseteq \bigcup_{i\in I}V_i$ for some set $I\subseteq \{1,\ldots,k\}$ with $|I|\geq 2$. Then, we have that $f|_{G\left(\bigcup_{i\in I}V_i\right)}$ is an eigenfunction of $G\left(\bigcup_{i\in I}V_i\right)$ with eigenvalue $
    1+(k-1)(\lambda-1)/(|I|-1).
    $
\end{proposition}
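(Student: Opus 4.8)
The plan is to verify the defining eigenpair equation \eqref{eq:eigenpair} directly on the induced subgraph $H\coloneqq G\bigl(\bigcup_{i\in I}V_i\bigr)$, by comparing it with the equation that $f$ already satisfies on $G$. Write $W_1\coloneqq \bigcup_{i\in I}V_i$ and $W_2\coloneqq V\setminus W_1$. Since $(\lambda,f)$ is an eigenpair for $L(G)$, equation \eqref{eq:eigenpair} gives, for every $v\in W_1$,
\[
(1-\lambda)f(v)=\frac{1}{\deg_G v}\sum_{w\sim_G v}f(w).
\]
The support hypothesis $\supp(f)\subseteq W_1$ means $f(w)=0$ for every $w\in W_2$, so the sum over all $G$-neighbours of $v$ collapses to the sum over the $H$-neighbours of $v$; that is, $\sum_{w\sim_G v}f(w)=\sum_{w\sim_H v}f(w)$.

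The key step is then to compute $\deg_H v$ for $v\in W_1$ using that $c$ is equitable with respect to $D^{-1}A$. If $v\in V_j$ with $j\in I$, then its $H$-neighbours are exactly its $G$-neighbours lying in $\bigcup_{i\in I,\,i\neq j}V_i$, so by Definition \ref{def:equitable} (equitability with respect to $D^{-1}A$) we get
\[
\deg_H v=\sum_{i\in I,\,i\neq j}e\bigl(v,V_i\bigr)=\bigl(|I|-1\bigr)\frac{\deg_G v}{k-1}=\frac{(|I|-1)\deg_G v}{k-1}.
\]
The crucial feature is that the ratio $\deg_G v/\deg_H v=(k-1)/(|I|-1)$ is a constant independent of $v$. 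As a by-product, since $\deg_G v>0$ and $|I|\geq 2$, every vertex of $H$ has positive degree, so $L(H)$ is well defined; and since $f\neq\mathbf 0$ with support in $W_1$, its restriction $f|_{W_1}$ is a nonzero function on $H$, hence a genuine candidate eigenfunction.

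Combining the two displays, for each $v\in W_1$,
\[
\frac{1}{\deg_H v}\sum_{w\sim_H v}f(w)=\frac{1}{\deg_H v}\,(1-\lambda)\deg_G v\, f(v)=(1-\lambda)\frac{k-1}{|I|-1}\,f(v),
\]
so $f|_{W_1}$ satisfies \eqref{eq:eigenpair} on $H$ with the value $\mu$ determined by $1-\mu=(1-\lambda)(k-1)/(|I|-1)$, i.e. $\mu=1+(k-1)(\lambda-1)/(|I|-1)$, which is precisely the claimed eigenvalue. I expect the only genuinely non-routine point to be the degree count $\deg_H v=(|I|-1)\deg_G v/(k-1)$: once the equitability of $c$ forces this exact proportionality (and in particular makes $\deg_G v/\deg_H v$ a single global constant rather than a vertex-dependent quantity), the rest is an immediate substitution into \eqref{eq:eigenpair}. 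It is worth flagging that if one dropped the equitability hypothesis, the ratio $\deg_G v/\deg_H v$ would vary with $v$ and $f|_{W_1}$ would in general fail to be an eigenfunction, so this is exactly where the assumption is used.
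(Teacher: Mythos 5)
Your proposal is correct and follows essentially the same route as the paper's proof: both verify equation \eqref{eq:eigenpair} on the induced subgraph by combining the support hypothesis (to collapse the neighbour sum) with the equitability-forced degree relation $\deg_{G_I}v=(|I|-1)\deg_G v/(k-1)$. Your write-up simply makes the degree computation and the role of equitability more explicit than the paper does.
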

\begin{proof}
    We use the notations $f_I\coloneqq f|_{G\left(\bigcup_{i\in I}V_i\right)}$ and $G_I\coloneqq G\left(\bigcup_{i\in I}V_i\right)$. Let $v\in \bigcup_{i\in I}V_i$. Then, it follows from Equation \eqref{eq:eigenpair}, that
    \begin{align*}
        \biggl(1-\biggl(1+\frac{(k-1)(\lambda-1)}{|I|-1}\biggr)\biggr)f_I(v) &= \biggl(\frac{k-1}{|I|-1}(1-\lambda)\biggr)f(v)\\
        &= \frac{k-1}{|I|-1}\cdot\frac1{\deg_G v}\sum_{\substack{w\in V(G):\\ \{w,v\}\in E(G)}}f(w)\\
        &= \frac1{\deg_{G_I}v}\sum_{\substack{w\in V(G_I):\\\{w,v\}\in E(G_I)}}f_I(w).
    \end{align*}
   We conclude that $f_I$ is an eigenfunction of $G_I$ with eigenvalue $1+(k-1)(\lambda-1)/(|I|-1)$.
\end{proof}

\section{Graphs with equal edge spread}\label{section:counter}
As before, we fix a connected simple graph $G=(V,E)$ on $N\geq 2$ vertices throughout the section.
In view of Theorem \ref{thm:edgespread}, it is natural to ask the following question.
\begin{question}\label{qu:equality}
    Is it true that $\lambda_N=\chi/(\chi-1)$ if and only if, for every proper $\chi$-coloring, its coloring classes $V_1,\ldots,V_\chi$ are such that
    \[
    e\bigl(v,V_i\bigr) = \begin{cases}
        \frac{\deg v}{\chi-1}&\text{ if } v\notin V_i,\\
        0 &\text{ if }v\in V_i?
    \end{cases}
    \]
\end{question}
By Theorem \ref{thm:edgespread}, the implication $(\Rightarrow)$ from Question \ref{qu:equality} is true. This section is dedicated to showing that the other implication does not hold, hence the answer to the question is \emph{no}.

In this section, we shall construct a family of graphs for which some members do not satisfy Question \ref{qu:equality}, as well as the opposite implication in Proposition \ref{prop:multlambdaN}. As a preliminary result, we first need to prove the following generalization of Lemma \ref{lem:Butlerspectral}, Proposition \ref{prop:eigenfns} and Proposition \ref{prop:eigenfns}.

\begin{proposition}\label{prop:plusminus}
    Let $V_+,V_-\subset V$ be disjoint subsets of $V$, and consider the function $f_{+-}$ defined by
    \[
    f_{+-}(v) := \begin{cases}
        1 &\text{ if } v\in V_+,\\
        -1 &\text{ if } v\in V_-,\\
        0&\text{ otherwise}
    \end{cases}
    \]    
    Then, $f_{+-}$ is an eigenfunction of $L$ with corresponding eigenvalue $\lambda$ if and only if the following two statements hold:
    \begin{enumerate}
        \item For all $v_0\notin V_+\cup V_-$, 
        \[
        e(v_0,V_+) = e(v_0,V_-).
        \]
        \item For all $v_-\in V_-$ and $v_+\in V_+$,
        \[
        \lambda-1 = \frac{e\bigl(v_-,V_+\bigr)-e\bigl(v_-,V_-\bigr)}{\deg v_-} = \frac{e\bigl(v_+,V_-\bigr)-e\bigl(v_+,V_+\bigr)}{\deg v_+}.
        \] 
         In particular, if $V_-$ and $V_+$ are independent sets,  then the above equation simplifies to
        \[
       \lambda-1 = \frac{e\bigl(v_-,V_+\bigr)}{\deg v_-} = \frac{e\bigl(v_+,V_-\bigr)}{\deg v_+}.
        \]
    \end{enumerate}
\end{proposition}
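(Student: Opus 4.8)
The plan is to directly apply the eigenpair characterization from Equation \eqref{eq:eigenpair}, which states that $(\lambda, f_{+-})$ is an eigenpair if and only if $(1-\lambda)f_{+-}(v) = \frac{1}{\deg v}\sum_{w\sim v}f_{+-}(w)$ holds for \emph{every} vertex $v\in V$. The strategy is to split this single condition into three cases according to whether $v$ lies in $V_+$, in $V_-$, or outside $V_+\cup V_-$, and then to observe that these three families of equations are precisely equivalent to conditions (1) and (2) in the statement.

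\begin{proof}
By \eqref{eq:eigenpair}, $f_{+-}$ is an eigenfunction with eigenvalue $\lambda$ if and only if, for all $v\in V$,
\begin{equation}\label{eq:plusminuseig}
    (1-\lambda)f_{+-}(v) = \frac{1}{\deg v}\sum_{w\sim v}f_{+-}(w).
\end{equation}
For any vertex $v$, the right-hand side counts neighbors in $V_+$ with weight $+1$ and neighbors in $V_-$ with weight $-1$, so $\sum_{w\sim v}f_{+-}(w) = e(v,V_+)-e(v,V_-)$.

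We now examine \eqref{eq:plusminuseig} in three cases. First, if $v_0\notin V_+\cup V_-$, then $f_{+-}(v_0)=0$, so the left-hand side vanishes and \eqref{eq:plusminuseig} becomes $e(v_0,V_+)-e(v_0,V_-)=0$, which is exactly condition (1). Second, if $v_+\in V_+$, then $f_{+-}(v_+)=1$, and multiplying \eqref{eq:plusminuseig} by $-1$ gives
\[
\lambda-1 = \frac{e(v_+,V_-)-e(v_+,V_+)}{\deg v_+}.
\]
Third, if $v_-\in V_-$, then $f_{+-}(v_-)=-1$, and \eqref{eq:plusminuseig} yields
\[
\lambda-1 = \frac{e(v_-,V_+)-e(v_-,V_-)}{\deg v_-}.
\]
These last two displays together are precisely condition (2). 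Since \eqref{eq:plusminuseig} must hold for all $v\in V$ simultaneously, and these three cases exhaust $V$, the eigenpair condition is equivalent to the conjunction of (1) and (2). Finally, if $V_+$ and $V_-$ are independent sets, then $e(v_+,V_+)=0$ and $e(v_-,V_-)=0$, which simplifies condition (2) to the stated form.
\end{proof}

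The main point requiring care is bookkeeping rather than any real obstacle: one must make sure that the weighted neighbor-sum $\sum_{w\sim v}f_{+-}(w)$ is correctly identified with $e(v,V_+)-e(v,V_-)$ in each case, and that the sign of $f_{+-}(v)$ on the left-hand side is tracked correctly when $v\in V_+$ versus $v\in V_-$. I expect no genuine difficulty, since this result merely unifies Lemma \ref{lem:Butlerspectral} and Proposition \ref{prop:eigenfns} by replacing their specific vertex sets with arbitrary disjoint $V_+,V_-$; the recovery of those earlier statements as special cases (taking $V_+=\{v\}$, $V_-=\{w\}$, or $V_+=V_i$, $V_-=V_j$) serves as a useful consistency check.
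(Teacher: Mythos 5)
Your proof is correct and follows essentially the same route as the paper's: both apply the eigenpair characterization \eqref{eq:eigenpair} pointwise, split into the three cases $v\in V_+$, $v\in V_-$, and $v\notin V_+\cup V_-$, and identify the weighted neighbor-sum with $e(v,V_+)-e(v,V_-)$. The sign bookkeeping checks out in all three cases, and your explicit treatment of the independent-set simplification is a harmless addition the paper leaves implicit.
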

\begin{proof}
     By \eqref{eq:eigenpair}, $(f_{+-},\lambda)$ is an eigenpair if and only if, for all $v\in V$,
    \[
    (\lambda-1)f(v) = -\frac1{\deg v}\sum_{w\sim v}f(w).
    \]
    Hence, given $v_0\notin V_+\cup V_-$, $v_-\in V_-$ and $v_+\in V_+$, we have that $(f_{+-},\lambda)$ is an eigenpair if and only if the following three equations hold:
    
    \begin{align*}
        0 &= (\lambda-1)f(v_0) = \frac{e\bigl(v_0,V_+\bigr) - e\bigl(v_0,V_-\bigr)}{\deg v_0},\\
        \lambda-1 &= -(\lambda-1)f\bigl(v_-\bigr) = \frac{e\bigl(v_-,V_+\bigr) - e\bigl(v_-,V_-\bigr)}{\deg v_-},\\
        \lambda-1 &= (\lambda-1)f\bigl(v_+\bigr) = \frac{e\bigl(v_+,V_-\bigr) - e\bigl(v_+,V_+\bigr)}{\deg v_+}.\qedhere
    \end{align*}
\end{proof}
We dedicate the rest of this section to the construction and the study of a special family of graphs, with the aim of giving a counterexample to  Question \ref{qu:equality}. These graphs are constructed by taking a complete multipartite graph with $\theta$ partition classes of the same size, and removing disjoint $\theta$-cliques.

\begin{definition}\label{def:G_{k,theta}}
    Let $G_{k,\theta}^d$ with $k,\theta,d\geq 0$ and $0\leq d\leq k$ be the graph with vertex set
    \[
    V\bigl(G_{k,\theta}^d\bigr) = \bigcup_{i=1}^{\theta} \bigl\{v_1^i,\ldots,v_k^i\bigr\},
    \]
    where $v_{j_1}^{i_1}$ and $v_{j_2}^{i_2}$ are \emph{not} adjacent if and only if exactly one of the following holds:
    \begin{itemize}
        \item either $i_1 = i_2$, or
        \item $i_1\neq i_2$ and $j_1=j_2\leq d$.
    \end{itemize}
\end{definition}
Hence, $G_{k,\theta}^0$ is the complete multipartite graph that has $\theta$ coloring classes of size $k$, and we know from Section \ref{subsec:listofgphs} that it has spectrum
\[
\biggl\{ \frac{\theta}{\theta-1}^{(\theta-1)},1^{((k-1)\theta)},0^{(1)}\biggr\}.
\]
More generally, $G_{k,\theta}^d$ is given by the complete multipartite graph with $\theta$ partition classes
\[
V_i\coloneqq \bigl\{ v_1^i,\ldots,v_k^i\bigr\}
\]  of size $k$, in which $d$ disjoint $\theta$-cliques of edges are removed. Two examples of this graph are shown in Figure \ref{fig:G_{k,theta}^d}.

\begin{figure}
    \centering
    \subfigure[$G_{2,3}^1$]{\includegraphics[width=60mm]{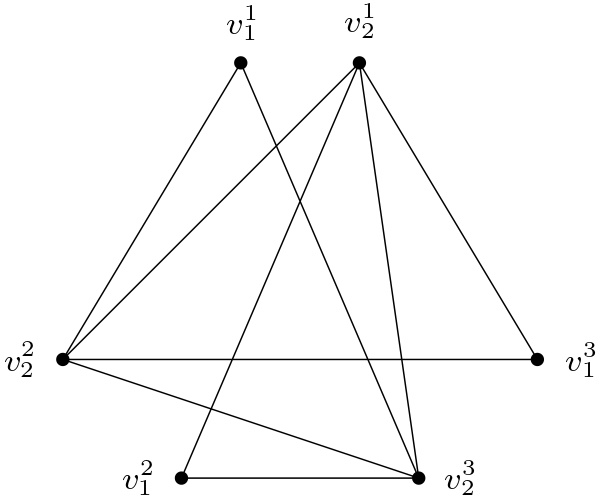}}
    \hfill
    \subfigure[$G_{4,3}^2$]{\includegraphics[width=80mm]{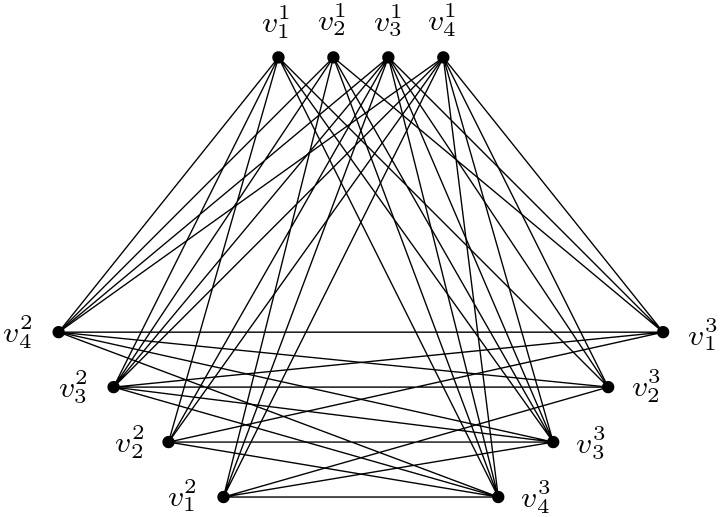}}
    \caption{Two examples of the graph $G_{k,\theta}^d$}
    \label{fig:G_{k,theta}^d}
\end{figure}

The following proposition and its corollary show that the answer to Question \ref{qu:equality} is \emph{no}.

\begin{proposition}\label{prop:G_{k,chi}}
    Let $k,\theta>1$ and fix $d$ such that $0\leq d\leq k$. 
    \begin{enumerate}[label=(\roman*)]
        \item If $d=k$, then $G_{k,\theta}^k$ is isomorphic to $G_{\theta,k}^\theta$.
        \item If either $d< k$, or $d=k$ and $k\geq \theta$, then the graph $G_{k,\theta}^d$ has coloring number $\theta$. 
        \item If $d<k$, then  $G_{k,\theta}^d$ has exactly one proper $\theta$-coloring, up to a permutation of the coloring classes. This is given by $c\colon V\to \{1,\ldots,\theta\}$ such that $c(v)=i \iff v\in V_i$.
        \item The graph $G_{k,\theta}^d$ satisfies
        \[
        e\bigl(v,V_i\bigr) = \begin{cases}
        \frac{\deg v}{\theta-1}&\text{ if } v\notin V_i,\\
        0 &\text{ if }v\in V_i.
        \end{cases}
        \]
        \item If $0< d<k$, then $G_{k,\theta}^d$ has spectrum
        \begin{align*}
        \Bigg\{ &\frac k{k-1}^{\bigl(d-1\bigr)},\frac{k^2-d}{k(k-1)}^{(1)},
        \frac{\theta}{\theta-1}^{\bigl(\theta-1\bigr)},  1^{\bigl((k-d-1)\theta\bigr)},\\ &\biggl(1-\frac{k-d}{k(k-1)(\theta-1)}\biggr)^{(\theta-1)}, \biggl(1-\frac1{(k-1)(\theta-1)}\biggr)^{(d-1)(\theta-1)}, 0^{(1)} \Bigg\}.
        \end{align*}
        \item If $d=k\geq \theta$ and $k\theta>4$, then $G_{k,\theta}^d$ has spectrum
         \begin{align*}
        \Bigg\{ &\frac{\theta}{\theta-1}^{\bigl(\theta-1\bigr)}, \frac k{k-1}^{\bigl(d-1\bigr)}, \biggl(1-\frac1{(k-1)(\theta-1)}\biggr)^{(d-1)(\theta-1)}, 0^{(1)} \Bigg\}.
        \end{align*}
    \end{enumerate}
\end{proposition}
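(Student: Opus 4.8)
The plan is to dispatch the combinatorial parts (i)--(iv) directly from Definition \ref{def:G_{k,theta}}, and then to obtain the spectra in (v)--(vi) by peeling off the duplicate-vertex eigenvectors and block-diagonalizing the eigenvalue equation \eqref{eq:eigenpair} under the symmetry of the construction. I would first record the two degree types that drive everything: a vertex $v_j^i$ with $j\le d$ has degree $(\theta-1)(k-1)$, while one with $j>d$ has degree $(\theta-1)k$. For (i), when $d=k$ every subscript is removed, so $v_{j_1}^{i_1}\sim v_{j_2}^{i_2}$ exactly when $i_1\ne i_2$ and $j_1\ne j_2$; the bijection $v_j^i\mapsto v_i^j$ swapping sub- and superscript is then an isomorphism onto $G_{\theta,k}^\theta$. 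For (ii), the coloring $c(v_j^i)=i$ is proper with $\theta$ colors, giving $\chi\le\theta$, and a $\theta$-clique for the reverse bound is $\{v_k^1,\dots,v_k^\theta\}$ when $d<k$ (common subscript $k>d$, hence pairwise adjacent) or $\{v_1^1,v_2^2,\dots,v_\theta^\theta\}$ when $d=k\ge\theta$ (distinct subscripts). For (iii) with $d<k$, this same clique forces a permutation of the colors, after which each $v_j^{i'}$ is adjacent to $v_k^i$ for all $i\ne i'$ and is thus forced into the color of $v_k^{i'}$, pinning the coloring up to permutation. Part (iv) is a direct count: for $v=v_j^{i''}\notin V_i$ one gets $e(v,V_i)=k-1$ if $j\le d$ and $e(v,V_i)=k$ if $j>d$, each equal to $\deg v/(\theta-1)$.

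For the spectra the first step is to remove the duplicate eigenvectors. Within each class $i$ the high-index vertices $v_{d+1}^i,\dots,v_k^i$ share the common neighbourhood consisting of all vertices outside class $i$, so they are mutually duplicate (Definition \ref{def:twinduplicate}); by Proposition \ref{prop:Butlertwins} they contribute the eigenvalue $1$ with total multiplicity $\theta(k-d-1)$, a term present in (v) and absent in (vi) where $d=k$. Since the high-index vertices of a class all have equal degree, the eigenfunctions orthogonal to these must be constant on each high-index block, so I reduce to the $(d+1)\theta$-dimensional space of functions with $f(v_j^i)=a_{ji}$ for $j\le d$ and $f(v_j^i)=h_i$ for $j>d$. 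On this space I would exploit the commuting automorphisms permuting the $\theta$ classes (acting on the superscript) and permuting the $d$ removed slots uniformly across classes (acting on the low-index subscript); these block-diagonalize \eqref{eq:eigenpair} by isotypic type, namely trivial-or-standard in the slot index paired with trivial-or-standard in the class index.

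Writing \eqref{eq:eigenpair} in the variables $a_{ji},h_i$ through the class-sums $S_i=\sum_{j\le d}a_{ji}+(k-d)h_i$, the slot-sums $R_j=\sum_i a_{ji}$, and the total $T=\sum_i S_i$, each block becomes a small explicit problem. The two slot-standard blocks decouple from $h$ and give scalar equations yielding $1-\tfrac{1}{(k-1)(\theta-1)}$ with multiplicity $(d-1)(\theta-1)$ and $\tfrac{k}{k-1}$ with multiplicity $d-1$. The two slot-trivial blocks couple $a$ to $h$ into $2\times2$ systems: the class-standard one has eigenvalues $\tfrac{\theta}{\theta-1}$ (eigenvector $a=h$, which also follows from Proposition \ref{prop:eigenfns} via the equitable coloring of (iv)) and $1-\tfrac{k-d}{k(k-1)(\theta-1)}$, each with multiplicity $\theta-1$, while the class-trivial one has the simple eigenvalues $0$ and $\tfrac{k^2-d}{k(k-1)}$. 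Together with the duplicate eigenvalue $1$ this reproduces the stated multisets, and specializing $d=k$ (so that $h$ and the slot-standard/class-standard coupling collapse) gives (vi); here the hypothesis $k\theta>4$ is needed to exclude the degenerate disconnected case $G_{2,2}^2\cong 2K_2$.

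The hard part will be the bookkeeping in this last step: matching each isotypic component to the correct eigenvalue and multiplicity, solving and simplifying the two coupled $2\times2$ blocks (which are exactly what produce the two ``messy'' values $1-\tfrac{k-d}{k(k-1)(\theta-1)}$ and $\tfrac{k^2-d}{k(k-1)}$ alongside $\tfrac{\theta}{\theta-1}$ and $0$), and finally verifying that all listed multiplicities sum to $N=k\theta$. This count is not merely a sanity check but the argument for \emph{completeness}: once the recovered eigenvalues account for all $k\theta$ dimensions, no eigenvalue has been missed and the multisets in (v) and (vi) are exactly the spectra.
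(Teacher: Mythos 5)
Your treatment of (i)--(iv) matches the paper's almost verbatim (the paper uses the clique $\{v_{d+1}^1,\ldots,v_{d+1}^\theta\}$ where you use $\{v_k^1,\ldots,v_k^\theta\}$; both work since any common subscript $>d$ gives a $\theta$-clique). For the spectra in (v)--(vi), however, you take a genuinely different route. The paper simply exhibits, for each listed eigenvalue, an explicit family of linearly independent eigenfunctions (the functions $f_{1j}'$, $f_{1i}$, the duplicate-pair functions, and two rather non-obvious families $g_{1i}$ and $h_{ij}$ with values $\pm k(k-d)$ and $\mp d(k-1)$), verifies each via Proposition \ref{prop:plusminus} or Equation \eqref{eq:eigenpair}, and relies on the total count being $k\theta$. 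You instead peel off the $\theta(k-d-1)$ duplicate eigenvalues $1$, pass to the quotient space of functions constant on each high-index block, and block-diagonalize $L$ by the isotypic decomposition under the $S_d\times S_\theta$ symmetry; the four isotypic types then yield two scalar blocks (eigenvalues $k/(k-1)$ and $1-1/((k-1)(\theta-1))$ with the right multiplicities) and two coupled $2\times 2$ blocks (producing $\{0,(k^2-d)/(k(k-1))\}$ and $\{\theta/(\theta-1),\,1-(k-d)/(k(k-1)(\theta-1))\}$). I checked the degree computations and the resulting blocks, and they do produce exactly the stated eigenvalues, so the approach is sound; what remains is the routine linear algebra you flag as bookkeeping. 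Your method buys a structural explanation of where the two ``messy'' eigenvalues come from and makes completeness automatic (the isotypic components exhaust the $k\theta$-dimensional space by construction), whereas the paper's method requires guessing the eigenfunctions but is shorter to verify line by line; both ultimately hinge on the same dimension count $\sum m_i = k\theta$.
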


\begin{proof}
    \begin{enumerate}[label=(\roman*)]
    \item It is easy to check that an isomorphism is given by 
    \begin{align*}
        V\Bigl(G_{k,\theta}^k\Bigr) &\to V\Bigl(G_{\theta,k}^{\theta}\Bigr)\\
        v_j^i &\mapsto v_i^j.
    \end{align*}

    \item If $d< k$, then the coloring number of $G_{k,\theta}^d$ equals $\theta$ because the graph contains the $\theta$-clique $\{v_{d+1}^1,\ldots,v_{d+1}^\theta\}$. Similarly, if $d=k$ and $k\geq \theta$, then the graph $G_{k,\theta}^d$ has coloring number $\theta$ since it contains the $\theta$-clique $\{v_1^1,\ldots,v_\theta^\theta\}$.

    \item Assume that $d<k$. Note that, for any proper coloring, for all $i$ such that $1\leq i\leq\theta$ we must have that the vertices $v_{d+1}^i$ have different colors, since they form a $\theta$-clique. Now let $c$ be a proper $\theta$-coloring
    such that $c\bigl(v_{d+1}^i\bigr) = i$, and fix $v_j^i\notin \{v_{d+1}^1,\ldots,v_{d+1}^\theta\}$. Then, $v_j^i$ is adjacent to $v_{d+1}^{i'}$ for $i'\neq i$, implying that $c\bigl(v_j^i\bigr) = i$. This implies that there is one way to color $G_{k,\theta}^d$, up to permutation of the coloring classes.

    \item This claim is true by construction.

    \item We prove this claim by constructing linearly independent eigenfunctions for every eigenvalue.
    \begin{itemize}
        \item For the eigenvalue $k/(k-1)$, we consider $d-1$ linearly independent functions $f_{1j}'$ for $2\leq j\leq d$, defined by
        \[
        f_{1j}'(v) = \begin{cases}
            1, &\text{ if } v = v_1^i, 1\leq i\leq \theta,\\
            -1, &\text{ if } v = v_j^i, 1\leq i\leq \theta,\\
            0, &\text{ otherwise.}
        \end{cases}
        \]
By Proposition \ref{prop:plusminus}, these are  eigenfunctions corresponding to the eigenvalue  $k/(k-1)$.

        \item For the eigenvalue $(k^2-d)/(k(k-1))$, one can check that one eigenfunction is given by
        \[
        f(v) \coloneqq \begin{cases}
            -k(k-d), &\text{ if } v = v_j^i, 1\leq i\leq \theta, j \leq d,\\
            d(k-1), &\text{ if } v = v_j^i, 1\leq i\leq \theta, j>d.
        \end{cases}
        \]
        \item For the eigenvalue $\theta/(\theta-1)$, we have $\theta-1$ linearly independent eigenfunctions $f_{1i}$, for $2\leq i\leq \theta$, where $f_{1i}$ is defined as in Definition \ref{def:f_ij} with respect to the coloring classes $V_i$.
        \item For the eigenvalue $1$, we have $(k-d-1)\theta$ linearly independent eigenfunctions $f_{v_2^i,v_j^i}$ for $1\leq i\leq \theta$ and $d+2\leq j\leq k$, as in Definition \ref{def:f_vw}:
        \[
        f_{v_{d+1}^i,v_j^i}(v) \coloneqq \begin{cases}
            1,&\text{ if }v = v_{d+1}^i,\\
            -1, &\text{ if }v = v_j^i,\\
            0, &\text{ otherwise.}
        \end{cases}
        \]
        One can check that these are eigenfunctions 
    by observing that $v_{d+1}^i$ and $v_j^i$ are duplicate vertices if $j>d+1$, and by applying Proposition \ref{prop:plusminus}.
        \item For the eigenvalue $1-(k-d)/(k(k-1)(\theta-1))$, we have $\theta-1$ linearly independent eigenfunctions $g_{1i}$ for $2\leq i\leq \theta$, defined by
        \[
        g_{1i}(v) \coloneqq \begin{cases}
            k(k-d), &\text{ if } v = v_j^1, j \leq d,\\
            -d(k-1), &\text{ if } v = v_j^1, j>d,\\
            -k(k-d), &\text{ if } v = v_j^i, j \leq d,\\
            d(k-1), &\text{ if } v = v_j^i, j>d,\\
            0, &\text{ otherwise.}
        \end{cases}
        \] 
        \item For the eigenvalue $1-1/(k-1)(\theta-1)$, we have $(d-1)(\theta-1)$ linearly independent eigenfunctions $h_{ij}$, for $2\leq i\leq \theta$ and $2\leq j\leq d$, defined by
        \[
        h_{ij}(v) := \begin{cases}
            1, &\text{ if  $v=v_1^1$ or $v=v_j^i$},\\
            -1, &\text{ if $v = v_j^1$ or $v=v_1^i$},\\
            0, &\text{ otherwise.}
        \end{cases}
        \]
        One can check that these are eigenfunctions by applying Proposition \ref{prop:plusminus}.
    \end{itemize}
    \item The eigenfunctions in this case are given by the same functions as in point (v).\qedhere
    \end{enumerate}
    \end{proof}
An immediate corollary is the following.
\begin{corollary}\label{cor:G_{k,chi}}
    Let $k,\theta>1$ and $0< d\leq k$ such that $k,\theta$ and $d$ do not all equal $2$. Assume that $k\geq \theta$ if $d=k$. Then $G_{k,\theta}^d$ has coloring number $\theta$, and we have the following cases for its largest eigenvalue.
    \begin{enumerate}
        \item If $\theta<k$, then $G_{k,\theta}^d$ has largest eigenvalue $\theta/(\theta-1)$ with multiplicity $\theta-1$.
        \item If $\theta=k$ and $d>1$, then $G_{k,\theta}^d$ has largest eigenvalue $\theta/(\theta-1)=k/(k-1)$ with multiplicity $\theta+d-2$.
        \item If $\theta=k$ and $d=1$, then $G_{k,\theta}^d$ has largest eigenvalue $\theta/(\theta-1)$ with multiplicity $\theta-1$.
        \item If $\theta=k+1$ and $d=1$, then $G_{k,\theta}^d$ has largest eigenvalue $\theta/(\theta-1)$ with multiplicity $\theta$.
        \item If $\theta>k+1$ and $d=1$, then $G_{k,\theta}^d$ has largest eigenvalue $(k+1)/k>\theta/(\theta-1)$ with multiplicity $1$.
        \item If $\theta>k>d>1$, then $G_{k,\theta}^d$ has largest eigenvalue $k/(k-1)>\theta/(\theta-1)$ with multiplicity $d-1$.
    \end{enumerate}
    
\end{corollary}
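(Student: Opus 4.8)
The plan is to treat Corollary~\ref{cor:G_{k,chi}} as a bookkeeping exercise over the spectra already computed in Proposition~\ref{prop:G_{k,chi}}, parts (v) and (vi), together with the coloring number $\theta$ from part (ii). The only genuine content is deciding, in each regime of $(k,\theta,d)$, which listed eigenvalue is the largest and with what multiplicity; everything else is substitution.

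First I would isolate the eigenvalues that can possibly realize the maximum. Inspecting the spectra in (v) and (vi), every eigenvalue other than
\[
\frac{k}{k-1},\qquad \frac{\theta}{\theta-1},\qquad \frac{k^2-d}{k(k-1)}
\]
is at most $1$, whereas both $k/(k-1)$ and $\theta/(\theta-1)$ exceed $1$; hence the maximum is attained among these three candidates, and no eigenvalue below $1$ can ever tie the maximum. I would then record two elementary facts. Writing $g(x)\coloneqq x/(x-1)=1+1/(x-1)$, the map $g$ is strictly decreasing for $x>1$, so $g(k)$ and $g(\theta)$ are ordered by the reverse order of $k$ and $\theta$, with $g(k)=g(\theta)$ exactly when $k=\theta$. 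Second, $\frac{k^2-d}{k(k-1)}<\frac{k}{k-1}$ for every $d>0$, and when $d=1$ this eigenvalue simplifies to $(k^2-1)/(k(k-1))=(k+1)/k=g(k+1)$.

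With these in hand the case split is mechanical. When $d>1$, the candidate $\frac{k^2-d}{k(k-1)}$ is strictly dominated by $g(k)$, so the maximum equals $\max\{g(k),g(\theta)\}=g(\min\{k,\theta\})$; comparing $k$ with $\theta$ by monotonicity of $g$, and adding the multiplicities $d-1$ and $\theta-1$ when $k=\theta$, yields cases 1, 2 and 6 (here $\theta>k$ forces $d<k$ by the standing hypothesis $k\ge\theta$ whenever $d=k$, so $\theta>k>d>1$ in case 6). When $d=1$, the term $g(k)$ disappears since its multiplicity $d-1$ is zero, and the decisive comparison becomes $g(k+1)$ versus $g(\theta)$: monotonicity gives $g(k+1)>g(\theta)$, $g(k+1)=g(\theta)$, or $g(k+1)<g(\theta)$ according as $\theta>k+1$, $\theta=k+1$, or $\theta\le k$, producing cases 5, 4, and the common value $g(\theta)$ of case 3 (and of case 1 in the subcase $\theta<k$).

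The one step demanding care is the coincidence underlying case 4: when $\theta=k+1$ the ``extra'' eigenvalue $(k^2-1)/(k(k-1))$ equals $\theta/(\theta-1)$ exactly, so its multiplicity $1$ must be combined with the multiplicity $\theta-1$ of $\theta/(\theta-1)$ to give the stated value $\theta$. To finish, I would verify that the hypotheses ($k,\theta>1$, the exclusion of $k=\theta=d=2$, and $k\ge\theta$ when $d=k$) ensure that the correct spectrum formula—part (v) when $d<k$ and part (vi) when $d=k$—is in force in each regime, so that the multiplicity tallies read off above are exactly the ones claimed.
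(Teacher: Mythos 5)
Your proposal is correct and follows exactly the route the paper intends: the corollary is stated as an immediate consequence of Proposition~\ref{prop:G_{k,chi}}, and your case analysis (reducing to the three candidates $k/(k-1)$, $\theta/(\theta-1)$, $(k^2-d)/(k(k-1))$, using monotonicity of $x\mapsto x/(x-1)$, and catching the coincidence $(k^2-1)/(k(k-1))=\theta/(\theta-1)$ when $\theta=k+1$, $d=1$) is precisely the bookkeeping the paper leaves implicit. No gaps.
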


In particular, the first four cases in Corollary \ref{cor:G_{k,chi}} give us graphs with largest eigenvalue $\chi/(\chi-1)$.
    The last two cases give us graphs for which Question \ref{qu:equality} does not hold. Furthermore, the second case with $d<k$ and the fourth case give us graphs for which the converse of Proposition \ref{prop:multlambdaN} is not true.

\section{Constructing graphs with largest eigenvalue \texorpdfstring{$\chi/(\chi-1)$}{chi/(chi-1)}}\label{section:1-sum}

\subsection{Preliminary definitions and results}
Throughout this section, we fix two graphs $G_1$ and $G_2$, as well as vertices $x_1\in V(G_1)$ and $x_2\in V(G_2)$. We shall consider a graph operation, called the \emph{$1$-sum} \cite{Gurski2017} or \emph{graph joining} \cite{BanerjeeJost2008} or \emph{coalescing} \cite{CvetkovicDragos2010spectra}, which can be applied to two graphs that have the same largest eigenvalue, to obtain a new graph with this same largest eigenvalue. In particular, we shall see that, if $G_1$ and $G_2$ have the same coloring number $\chi$ and largest eigenvalue $$\lambda_{\max}(G_1) = \lambda_{\max}(G_2) = \frac{\chi}{\chi-1},$$ we can apply this operation to obtain a new graph with largest eigenvalue $\chi/(\chi-1)$.  Furthermore, we shall give the multiplicity of the eigenvalue $\chi/(\chi-1)$ of the $1$-sum of $G_1$ and $G_2$ in terms of its multiplicity for $G_1$ and $G_2$.

We start by giving the definition of the $1$-sum. The idea is that $G_1[x_1]\oplus G_2[x_2]$ is defined as the union of $G_1$ and $G_2$ in which the vertices $x_1$ and $x_2$ are identified in a new vertex $y$.
\begin{definition}\label{def:1-sum}
    The \emph{1-sum} $G_1[x_1]\oplus G_2[x_2]$ of $G_1$ and $G_2$ with respect to $x_1$ and $x_2$ is the graph defined by
    \begin{align*}
        V\biggl( G_1[x_1]\oplus G_2[x_2] \biggr) &:= V(G_1)\cup V(G_2) \cup \{y\} \setminus \{x_1,x_2\},\\
        E\biggl( G_1[x_1]\oplus G_2[x_2] \biggr) &:= E(G_1)\cup E(G_2) \cup\bigl\{\{y,v_i\}\colon \{v_i,x_i\}\in E(G_i), i =1,2\bigr\}\\
        &\quad \setminus \bigl\{\{x_i,v_i\} \colon v_i\in V(G_i), i=1,2\bigr\}.
    \end{align*}
        \end{definition}

    Note that the $1$-sum depends on the choice of $x_1$ and $x_2$, as is illustrated in Figure \ref{fig:1-sum}. If every choice of $x_1$ results in the same graph $G_1[x_1]\oplus G_2[x_2]$, then we also use the notation $G_1\oplus G_2[x_2]$.

    The definition of the $1$-sum of two graphs can be extended to the $1$-sum of $m$ graphs. 
    
    \begin{definition}\label{def:gen1-sum}
        For $1\leq i\leq m$, let $G_i$ be a graph and let $x_i\in V(G_i)$. The \emph{$1$-sum} of $G_1,\ldots,G_m$ with respect to $x_1,\ldots,x_m$, is the graph $\bigoplus_{i=1}^m G_i[x_i]$, defined by
    \begin{align*}
        V\biggl( \bigoplus_{i=1}^m G_i[x_i] \biggr) &:= \bigcup_{i=1}^m V(G_i) \cup \{y\}\setminus \{x_i\colon 1\leq i\leq m\},\\
        E\biggl( \bigoplus_{i=1}^m G_i[x_i] \biggr) &:= \bigcup_{i=1}^m V(G_i) \cup\bigl\{\{y,v_i\}\colon \{v_i,x_i\}\in E(G_i), 1\leq i\leq m\bigr\}\\
        &\quad \setminus \bigl\{\{x_i,v_i\} \colon v_i\in V(G_i), 1\leq i\leq m\bigr\}.
    \end{align*}
\end{definition}

\begin{figure}
    \centering
    \includegraphics[width=100mm]{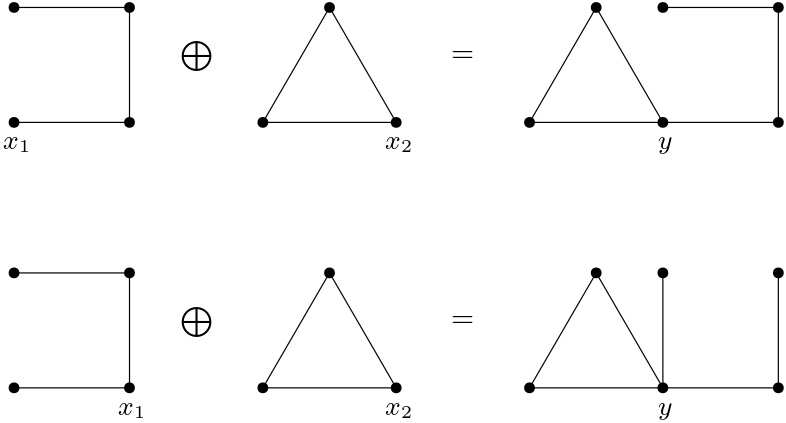}
    \caption{Two examples of the $1$-sum of two graphs with respect to $x_1$ and $x_2$}
    \label{fig:1-sum}
\end{figure}

\begin{remark}
   In Definition \ref{def:gen1-sum}, for the $1$-sum of $G_1,\ldots,G_m$ we identify one vertex of each graph $G_i$ with the same vertex in $\bigoplus_{i=1}^mG_i[x_i]$.
\end{remark}

\begin{remark}\label{rmk:colnmbr}
    It can be easily seen that
    \[
    \chi\bigl( G_1[x_1]\oplus G_2[x_2]\bigr) = \max\bigl\{ \chi(G_1),\chi(G_2)\bigr\}.
    \]
\end{remark}

Another graph operation that we shall consider is the \emph{join}.

\begin{definition}\label{def:join}
    The \emph{join} of $G_1$ and $G_2$, denoted $G_1\vee G_2$, is the graph constructed by taking the disjoint union of $G_1$ and $G_2$, and adding all edges between $V(G_1)$ and $V(G_2)$.
\end{definition}

\begin{example}\label{ex:genpetalgraph}
    For $n\geq2$, one can consider the $1$-sum $K_n^{(1)}\oplus K_n^{(2)}$ of two disjoint copies $K_n^{(1)}$ and $K_n^{(2)}$ of the complete graph on $n$ nodes. More generally, one can consider the $1$-sum of $m$ copies $K_n^{(i)}$ of $K_n$, denoted by
    \[
    \bigoplus_{i=1}^m K_n^{(i)},
    \]
    where we do not indicate with respect to what vertices $x_i$ we take the $1$-sum, as the choice of the vertices does not matter in this case.
    This gives one way of constructing the \emph{generalized petal graph} (see Figure \ref{fig:genpetalgphs} for two examples), which can be equivalently defined as
    \[
    K_1\vee mK_{n-1}.
    \]
As we shall see, defining the generalized petal graph as the $1$-sum of complete graphs, instead of the join of complete graphs, will allow us to infer that its largest eigenvalue must be $\chi/(\chi-1) = n/(n-1)$, and to compute its multiplicity, without having to calculate the whole spectrum. Furthermore, we shall generalize this result to the $1$-sum of arbitrary graphs.
\end{example}
\begin{figure}
    \centering
    \subfigure[The petal graph $\bigoplus_{i=1}^6K_3^{(i)}$]{\includegraphics[width=60mm]{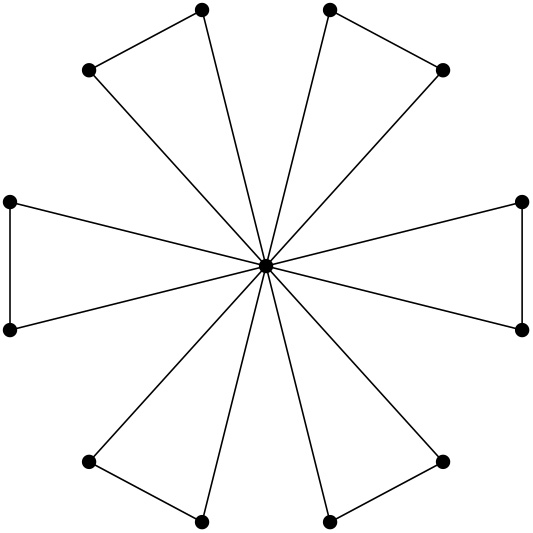}\label{fig:petalgph}}
    \hfill
    \subfigure[The flying kite graph $\bigoplus_{i=1}^4K_4^{(i)}$]{\includegraphics[width=60mm]{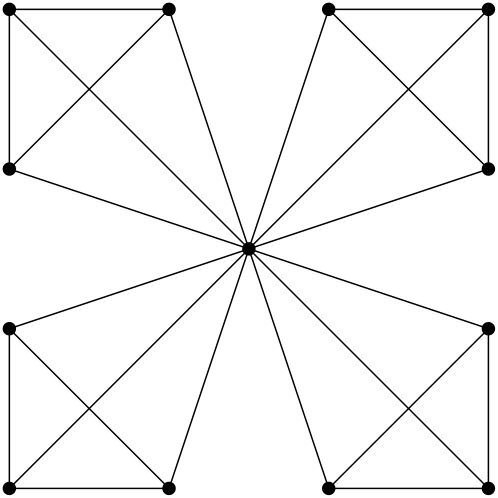}\label{fig:flyingkitegph}}
    \caption{Two examples of generalized petal graphs}
    \label{fig:genpetalgphs}
\end{figure}

For the rest of this section, in addition to fixing $G_1$, $G_2$, $x_1\in V(G_1)$ and $x_2\in V(G_2)$, we also let $y$ be the vertex of $G_1[x_1]\oplus G_2[x_2]$ that is identified with $x_1$ and $x_2$. 
We identify the subgraph of $G_1[x_1]\oplus G_2[x_2]$ induced by $V(G_1)\cup\{y\}\setminus \{x_1\}$ with $G_1$, and the subgraph induced by $V(G_2)\cup\{y\}\setminus\{x_2\}$ with $G_2$. \\

We shall also need the following definition.

\begin{definition}   Let $G'$ be a subgraph of $G$. Given a function $f\colon V(G) \to \R$, its \emph{restriction to $G'$} is defined as the function $f|_{G'}\colon V(G')\to \R$, given by $f|_{G'}(v) := f(v)$ for all $v\in V(G')$.
\end{definition}
 The following definition allows us to glue two functions together when taking the $1$-sum of two graphs.
\begin{definition}\label{def:gluefunctions}
    Let $f^1\colon V(G_1)\to\R$ and $f^2\colon V(G_2)\to\R$ be two functions such that $f^1(x_1) = f^2(x_2)$. Then we let 
    \[
    f^1\oplus_{x_1,x_2} f^2\colon V(G_1[x_1]\oplus G_2[x_2])\to \R
    \]
    be the function such that
    \[
    \bigl( f^1\oplus_{x_1,x_2} f^2\bigr)|_{G_1} = f^1 \text{ and } \bigl( f^1\oplus_{x_1,x_2} f^2\bigr)|_{G_2} = f^2.
    \]
\end{definition}

Furthermore, for $i=1,2$, we fix the notation

\[
\boldsymbol{0}^i\colon V(G_i)\to\R
\]
to denote the zero function.\\

We conclude with the following elementary lemma that will be needed in the proofs of this section.

\begin{lemma}\label{lem:fractions}
    Let $a,b,c,d\in \R_{>0}$. We have that
    \[
    \frac{a+b}{c+d} \leq \max\biggl \{ \frac ac,\frac bd\biggr\} \quad \text{ and } \quad \frac{a+b}{c+d} \geq \min\biggl \{ \frac ac,\frac bd\biggr\}.
    \]
    Moreover, equality holds if and only if $a/c = b/d$.
\end{lemma}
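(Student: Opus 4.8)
The plan is to prove the two inequalities together by a mediant-type argument, since the quantity $(a+b)/(c+d)$ is the classical mediant of the fractions $a/c$ and $b/d$. First I would establish the upper bound $\frac{a+b}{c+d}\leq\max\{a/c,b/d\}$. Without loss of generality I assume $a/c\geq b/d$, so that $\max\{a/c,b/d\}=a/c$; the symmetric case is identical after swapping the roles of the two fractions. Under this assumption the inequality $\frac{a+b}{c+d}\leq\frac ac$ is, after clearing the positive denominators $c+d$ and $c$ (here positivity of all four numbers is what guarantees the denominators are positive and that multiplying preserves the inequality direction), equivalent to $c(a+b)\leq a(c+d)$, i.e. to $cb\leq ad$, i.e. to $b/d\leq a/c$, which is exactly our assumption.

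For the lower bound I would proceed completely analogously. Again assuming $a/c\geq b/d$, so that $\min\{a/c,b/d\}=b/d$, the desired inequality $\frac bd\leq\frac{a+b}{c+d}$ clears to $b(c+d)\leq d(a+b)$, i.e. $bc\leq da$, i.e. $b/d\leq a/c$, which once more holds by assumption. Thus a single case distinction (which of the two fractions is larger) simultaneously settles both the upper and the lower bound, since the larger fraction dominates the mediant and the smaller one is dominated by it.

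Finally I would address the equality clause. Tracing through the computation above, the upper-bound inequality $c(a+b)\leq a(c+d)$ is an equality precisely when $cb=ad$, that is when $a/c=b/d$; and the same holds verbatim for the lower bound. Hence equality in either bound is equivalent to $a/c=b/d$, and in that common case the mediant equals both fractions and the two bounds coincide.

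I do not expect a genuine obstacle here, as the statement is elementary. The only point requiring mild care is the repeated use of positivity: all of $a,b,c,d$ being strictly positive is what lets me cross-multiply freely without reversing inequalities and what makes $c+d$ a legitimate positive denominator. I would state the argument only for the sub-case $a/c\geq b/d$ and explicitly invoke the symmetry $(a,c)\leftrightarrow(b,d)$ to cover the remaining sub-case, keeping the write-up short.
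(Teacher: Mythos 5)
Your proof is correct: the cross-multiplication argument under the positivity hypothesis, together with the symmetry $(a,c)\leftrightarrow(b,d)$, establishes both bounds and the equality condition $ad=bc$, i.e.\ $a/c=b/d$. The paper states this lemma without proof (it is the standard mediant inequality), so there is no argument to compare against; your write-up is a complete and appropriate justification.
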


\subsection{Spectral properties of the \texorpdfstring{$1$}{1}-sum of graphs}\label{section:1-sum-spectrum}
Cvetković, Rowlinson, and Simić \cite{CvetkovicDragos2010spectra} gave the adjacency characteristic polynomial of the $1$-sum of two graphs (Theorem 2.2.3). Guo, Li and Shiu (2013) \cite{GuoLiShiu2013} gave the characteristic polynomial of the Kirchoff Laplacian (Corollary 2.3), signless Laplacian (Corollary 2.8) and normalized Laplacian (Corollary 3.3) of the $1$-sum of two graphs.
In \cite{BanerjeeJost2008}, Banerjee and Jost (2008) proved the following theorem.
\begin{theorem}[Theorem 2.5 from \cite{BanerjeeJost2008}]
    Assume that $\lambda$ is an eigenvalue of both $G_1$ and $G_2$, and that there exist corresponding eigenfunctions $f_\lambda^1$ and $f_\lambda^2$, such that $f_\lambda^1(p_1)=f_\lambda^2(p_2)=0$ for some $p_1\in V(G_1)$ and $p_2\in V(G_2)$. Then, the graph $G_1[p_1]\oplus G_2[p_2]$ also has eigenvalue $\lambda$, with an eigenfunction given by $f_\lambda^1\oplus_{p_1,p_2}f_\lambda^2$.
\end{theorem}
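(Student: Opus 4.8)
The plan is to verify directly, using the eigenvalue equation~\eqref{eq:eigenpair}, that the glued function $f := f_\lambda^1 \oplus_{p_1,p_2} f_\lambda^2$ is an eigenfunction of $L(G_1[p_1]\oplus G_2[p_2])$ with eigenvalue $\lambda$. The construction of the $1$-sum leaves the neighborhoods of all vertices other than the identification vertex $y$ untouched, so the bulk of the work amounts to checking that the equation still holds at $y$, where the two graphs meet. The hypothesis $f_\lambda^1(p_1)=f_\lambda^2(p_2)=0$ is precisely what makes the gluing well-defined (the two values agree at the joined vertex) and, as I expect, is also what makes the degree bookkeeping at $y$ work out.

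First I would fix a vertex $v \in V(G_1[p_1]\oplus G_2[p_2])$ with $v \neq y$. By symmetry, say $v \in V(G_1)\setminus\{p_1\}$ after the identification. The crucial observation is that the neighbors of $v$ in the $1$-sum are exactly the neighbors of $v$ in $G_1$, with $p_1$ replaced by $y$ wherever it occurs; moreover $\deg v$ is unchanged by the $1$-sum. Since $f(y) = f_\lambda^1(p_1) = 0 = f(w)$ contributes nothing whether we read the value at $y$ or at $p_1$, we have
\[
\frac{1}{\deg v}\sum_{w\sim v} f(w) = \frac{1}{\deg_{G_1} v}\sum_{w\sim_{G_1} v} f_\lambda^1(w) = (1-\lambda)\,f_\lambda^1(v) = (1-\lambda)\,f(v),
\]
so~\eqref{eq:eigenpair} holds at $v$. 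The same argument applies verbatim to vertices coming from $G_2$.

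It remains to check~\eqref{eq:eigenpair} at $y$. Here $f(y)=0$, so the right-hand side of~\eqref{eq:eigenpair} reads $(1-\lambda)\,f(y)=0$, and I need the left-hand side, $\tfrac{1}{\deg y}\sum_{w\sim y}f(w)$, to vanish as well. The neighbors of $y$ split into those inherited from $p_1$ in $G_1$ and those inherited from $p_2$ in $G_2$, so
\[
\sum_{w\sim y} f(w) = \sum_{w\sim_{G_1} p_1} f_\lambda^1(w) + \sum_{w\sim_{G_2} p_2} f_\lambda^2(w) = \deg_{G_1}(p_1)(1-\lambda)f_\lambda^1(p_1) + \deg_{G_2}(p_2)(1-\lambda)f_\lambda^2(p_2),
\]
where each term is rewritten by applying~\eqref{eq:eigenpair} for $f_\lambda^1$ at $p_1$ and for $f_\lambda^2$ at $p_2$. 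Both $f_\lambda^1(p_1)$ and $f_\lambda^2(p_2)$ are zero by hypothesis, so the entire sum vanishes, and~\eqref{eq:eigenpair} is satisfied at $y$.

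I expect the only genuine subtlety to be at the identification vertex $y$, precisely because that is where the two graphs interact and where the degree of $y$ is the sum of the two original degrees rather than either one alone; everywhere else the verification is essentially trivial since the local structure is unchanged. The hypothesis that both eigenfunctions vanish at the glued vertices is what sidesteps this subtlety entirely, since it simultaneously makes $f(y)$ unambiguous and forces the mismatched-degree sum at $y$ to collapse to zero. Finally, I would note that $f$ is not identically zero whenever at least one of $f_\lambda^1, f_\lambda^2$ is nonzero off its joining vertex, which holds automatically as they are eigenfunctions.
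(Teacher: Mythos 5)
Your proof is correct and follows essentially the same route as the paper: the paper establishes the more general Proposition \ref{prop:gluingeigen} (requiring only that the two eigenfunctions agree at the glued vertices, not that they vanish there) by exactly this pointwise verification of \eqref{eq:eigenpair}, handling the identified vertex $y$ via a degree-weighted average of the two local sums. Your argument is the special case where the common value is zero, which makes that average collapse to zero as you observe.
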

Whereas we are interested in the largest eigenvalue of $G_1[x_1]\oplus G_2[x_2]$,
Banerjee and Jost in \cite{BanerjeeJost2008} were mostly interested in constructing graphs which have eigenvalue $\lambda=1$.
They observed that, if $G_1$ and $G_2$ both have eigenvalue $\lambda=1$ with corresponding eigenfunctions $f_1^1$ and $f_1^2$, then one only has to require that $f_1^1(x_1) = f_1^2(x_2)$, for $f_1^1\oplus_{x_1,x_2}f_1^2$ to be an eigenfunction of $G_1[x_1]\oplus G_2[x_2]$.
We now generalize this to arbitrary eigenvalues.

\begin{proposition}\label{prop:gluingeigen}
Assume that $G_1$ and $G_2$ have a common eigenvalue $\lambda$, and that there exist corresponding eigenfunctions $f^i\colon V(G_i)\to \R$, for $i=1,2$, such that  $f^1(x_1) = f^2(x_2)$. Then,  $f^1\oplus_{x_1,x_2} f^2$ is an eigenfunction for $G_1[x_1]\oplus G_2[x_2]$ with eigenvalue $\lambda$.
\end{proposition}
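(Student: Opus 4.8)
The plan is to verify directly, via the eigenpair characterization in Equation \eqref{eq:eigenpair}, that the glued function $f := f^1\oplus_{x_1,x_2}f^2$ satisfies
\[
(1-\lambda)f(v) = \frac{1}{\deg v}\sum_{w\sim v}f(w)
\]
at every vertex $v$ of $G := G_1[x_1]\oplus G_2[x_2]$. Following Definition \ref{def:1-sum}, I would split the vertex set of $G$ into three parts: the original vertices of $G_1$ other than $x_1$, the original vertices of $G_2$ other than $x_2$, and the merged vertex $y$. Since $f^1$ and $f^2$ are eigenfunctions, they are nonzero, so $f$ is nonzero as well; thus it suffices to check the eigenequation at each vertex.

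For a vertex $v\in V(G_1)\setminus\{x_1\}$, the key observation is that the $1$-sum preserves both its degree and its neighborhood, with the merged vertex $y$ simply playing the role of $x_1$ whenever $v\sim x_1$. Because $f$ restricts to $f^1$ on $G_1$ and because $f(y)=f^1(x_1)$ by the compatibility hypothesis, the right-hand side of the eigenequation at $v$ in $G$ coincides term by term with the right-hand side of the eigenequation for $(\lambda,f^1)$ at $v$ in $G_1$. The latter holds by assumption, so the eigenequation holds at $v$. The case $v\in V(G_2)\setminus\{x_2\}$ is identical with the roles of $G_1$ and $G_2$ exchanged.

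The only real work is at the merged vertex $y$, and this is where I expect the main (though mild) obstacle to lie. Here one must use that, by Definition \ref{def:1-sum}, $\deg_G y = \deg_{G_1}x_1 + \deg_{G_2}x_2$ and that the neighbors of $y$ are exactly the neighbors of $x_1$ in $G_1$ together with those of $x_2$ in $G_2$. I would rewrite the two eigenequations for $f^1$ at $x_1$ and for $f^2$ at $x_2$ in cleared form,
\[
\sum_{w\sim_{G_i}x_i}f^i(w) = (1-\lambda)\,f^i(x_i)\,\deg_{G_i}x_i,\qquad i=1,2,
\]
and then add them. The compatibility condition $f^1(x_1)=f^2(x_2)=f(y)$ is precisely what makes the two right-hand sides combine into $(1-\lambda)f(y)\bigl(\deg_{G_1}x_1+\deg_{G_2}x_2\bigr)=(1-\lambda)f(y)\deg_G y$, while the two left-hand sides sum to $\sum_{w\sim_G y}f(w)$. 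Dividing by $\deg_G y$ recovers the eigenequation at $y$. Having verified all three cases, I conclude that $f$ is an eigenfunction of $L\bigl(G\bigr)$ with eigenvalue $\lambda$.
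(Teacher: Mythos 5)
Your proposal is correct and follows essentially the same route as the paper: verify the eigenpair identity \eqref{eq:eigenpair} at the unmerged vertices (where degrees and neighborhoods are unchanged) and then combine the two eigenequations at $x_1$ and $x_2$ at the merged vertex $y$, using $f^1(x_1)=f^2(x_2)=f(y)$ and $\deg_G y=\deg_{G_1}x_1+\deg_{G_2}x_2$. Your cleared-denominator summation at $y$ is just the paper's weighted-average computation multiplied through by $\deg_G y$, so the two arguments are the same.
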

\begin{proof}
    Let $G\coloneqq G_1[x_1]\oplus G_2[x_2]$, $f\coloneqq f^1\oplus_{x_1,x_2}f^2$ and $d_i\coloneqq \deg_{G_i} x_i$, for $i=1,2$.
    For $i=1,2$, and for every vertex $v_i\in V(G)\setminus\{y\}$ such that $v_i\in V(G_i)$, we have that
    \begin{align*}
        (1-\lambda)f(v_i) &= \frac1{\deg_{G_i} v_i} \left( \sum_{\substack{w_i\in G_i\\ w_i\sim v_i}}f^i(w_i) \right)
        = \frac1{\deg_{G} v_i} \left( \sum_{\substack{w\in G\\ w\sim v_i}}f(w) \right).
    \end{align*}
    Furthermore,
    \begin{align*}
        \frac1{\deg_G y}\Biggl( \sum_{\substack{w\in G\\ w\sim y}}f(w) \Biggr)
        &= \frac{d_1}{d_1+d_2} \cdot \frac 1{d_1}\left( \sum_{\substack{w_1\in G_1\\ w_1\sim x_1}} f(w_1)\right) + \frac{d_2}{d_1+d_2} \cdot \frac1{d_2}\left(\sum_{\substack{w_2\in G_2\\ w_2\sim x_2}} f(w_2) \right)\\
        &= \frac{d_1}{d_1+d_2}(1-\lambda)f^1(x_1) + \frac{d_2}{d_1+d_2}(1-\lambda)f^2(x_2)\\
        &= (1-\lambda)f(y).
    \end{align*}
   By \eqref{eq:eigenpair}, it follows that $f$ is an eigenfunction for $G$ with eigenvalue $\lambda$.
\end{proof}
We also have the following proposition about the gluing of functions.

\begin{proposition}\label{prop:gluingzero}
    Assume that, for some eigenvalue $\lambda$ of $G_1$, there exists a corresponding eigenfunction $f^1\colon V(G_1)\to\R$ such that $f^1(x_1)=0$. Then, $f^1\oplus_{x_1,x_2}\boldsymbol 0^2$ is an eigenfunction for $G_1[x_1]\oplus G_2[x_2]$ with eigenvalue $\lambda$.
\end{proposition}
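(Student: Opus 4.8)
The plan is to mimic, almost verbatim, the vertex-by-vertex verification carried out in the proof of Proposition \ref{prop:gluingeigen}: I would check that the glued function $f \coloneqq f^1\oplus_{x_1,x_2}\boldsymbol 0^2$ satisfies the eigenvalue equation \eqref{eq:eigenpair} at every vertex of $G \coloneqq G_1[x_1]\oplus G_2[x_2]$. Note first that the gluing is well defined, since $f^1(x_1)=0=\boldsymbol 0^2(x_2)$ means the two functions agree at the identified vertex $y$; and $f$ is not identically zero because $f^1$ is a genuine eigenfunction. One cannot simply invoke Proposition \ref{prop:gluingeigen} here, because $\boldsymbol 0^2$ is not an eigenfunction of $G_2$, so a separate (though very similar) computation is needed.

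First I would handle the vertices $v\in V(G_1)\setminus\{y\}$. For such $v$, the neighbors in $G$ are exactly its neighbors in $G_1$ (with $x_1$ relabeled as $y$), the degree is unchanged, and $f$ agrees with $f^1$ on all of $G_1$ (recall $f(y)=f^1(x_1)$). Hence \eqref{eq:eigenpair} at $v$ is literally the eigenvalue equation for $f^1$ on $G_1$, which holds by hypothesis. Next, for the vertices $v\in V(G_2)\setminus\{y\}$, both $f(v)$ and every neighboring value $f(w)$ vanish, because the entire $G_2$-side of $f$, including $f(y)=0$, is zero; thus both sides of \eqref{eq:eigenpair} equal $0$.

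The only vertex requiring genuine attention — and the main, though mild, obstacle — is the identified vertex $y$. Its neighbors in $G$ are precisely the former neighbors of $x_1$ in $G_1$ together with the former neighbors of $x_2$ in $G_2$. The $G_2$-part of $\sum_{w\sim y}f(w)$ vanishes since $f$ is zero on $V(G_2)\setminus\{y\}$, while the $G_1$-part equals $\sum_{w_1\sim x_1}f^1(w_1)$, which by the eigenvalue equation for $f^1$ is $\deg_{G_1}(x_1)\,(1-\lambda)f^1(x_1)=0$, precisely because $f^1(x_1)=0$. Therefore $\sum_{w\sim y}f(w)=0=(1-\lambda)f(y)$, so \eqref{eq:eigenpair} holds at $y$ as well. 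Having verified the equation at all vertices, I would conclude via \eqref{eq:eigenpair} that $f$ is an eigenfunction of $G$ with eigenvalue $\lambda$.
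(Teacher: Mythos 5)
Your verification is correct and is exactly the computation the paper has in mind: its proof of this proposition is simply ``This is easily checked using Equation \eqref{eq:eigenpair},'' and your vertex-by-vertex check (with the only nontrivial point being the identified vertex $y$, where $f^1(x_1)=0$ kills both the $G_1$-contribution and the left-hand side) fills in precisely those omitted details. No gaps.
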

\begin{proof}
    This is easily checked using Equation \eqref{eq:eigenpair}.
\end{proof}

We can use Proposition \ref{prop:gluingeigen} and \ref{prop:gluingzero} to give a lower bound for the multiplicity of $\lambda$ as an eigenvalue of $G_1[x_1]\oplus G_2[x_2]$, as follows.
\begin{theorem}\label{thm:lambdamult}
    For every $\lambda\in[0,2]$, we have that
    \[
    m_{G_1[x_1]\oplus G_2[x_2]}\bigl(\lambda\bigr) \geq m_{G_1}\bigl(\lambda\bigr) + m_{G_2}\bigl(\lambda\bigr)-1.
    \]
\end{theorem}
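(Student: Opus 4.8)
The plan is to count the dimension of a space of eigenfunctions for $\lambda$ on the $1$-sum $G\coloneqq G_1[x_1]\oplus G_2[x_2]$ that we can build explicitly from eigenfunctions on $G_1$ and $G_2$, using the two gluing results (Proposition~\ref{prop:gluingeigen} and Proposition~\ref{prop:gluingzero}). Write $E_i$ for the $\lambda$-eigenspace of $G_i$, so $\dim E_i = m_{G_i}(\lambda)$, and consider the evaluation-at-$x_i$ functionals $\operatorname{ev}_i\colon E_i\to\R$, $f^i\mapsto f^i(x_i)$. The kernel $K_i\coloneqq\ker\operatorname{ev}_i$ consists of $\lambda$-eigenfunctions of $G_i$ vanishing at $x_i$; by Proposition~\ref{prop:gluingzero}, each such $f^1$ (glued with $\boldsymbol 0^2$) and each such $f^2$ (glued with $\boldsymbol 0^1$) yields a $\lambda$-eigenfunction of $G$. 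The remaining dimension comes from pairs with matching nonzero value at the glued vertex, handled by Proposition~\ref{prop:gluingeigen}.

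First I would build a linear map $\Phi\colon E_1\oplus E_2\to C(V(G))$ by sending a pair $(f^1,f^2)$ with $f^1(x_1)=f^2(x_2)$ to $f^1\oplus_{x_1,x_2}f^2$, and pairs of the vanishing type to the zero-extended gluings; more cleanly, restrict to the subspace
\[
P\coloneqq\bigl\{(f^1,f^2)\in E_1\oplus E_2 : f^1(x_1)=f^2(x_2)\bigr\},
\]
which is the kernel of the functional $(f^1,f^2)\mapsto f^1(x_1)-f^2(x_2)$ on $E_1\oplus E_2$. Since this functional takes values in $\R$, we have
\[
\dim P \geq \dim(E_1\oplus E_2) - 1 = m_{G_1}(\lambda)+m_{G_2}(\lambda)-1.
\]
By Proposition~\ref{prop:gluingeigen}, the map $(f^1,f^2)\mapsto f^1\oplus_{x_1,x_2}f^2$ sends $P$ into the $\lambda$-eigenspace of $G$, so it remains to check that this map is injective on $P$: this is immediate because a glued function determines both restrictions, i.e.\ $f^1\oplus_{x_1,x_2}f^2=0$ forces $f^1=\boldsymbol 0^1$ and $f^2=\boldsymbol 0^2$. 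Hence the image has dimension $\dim P$, giving the claimed bound.

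The main subtlety, and the step I would be most careful about, is the injectivity/independence bookkeeping when $x_1$ or $x_2$ is not in the support of every eigenfunction, i.e.\ reconciling the two propositions so as not to double-count. The cleanest route avoids invoking Proposition~\ref{prop:gluingzero} as a separate case: the subspace $P$ already contains every pair $(f^1,\boldsymbol 0^2)$ with $f^1(x_1)=0$ and every $(\boldsymbol 0^1,f^2)$ with $f^2(x_2)=0$, so a single application of the rank–nullity argument above (dimension of $E_1\oplus E_2$ drops by at most one under one scalar linear constraint) captures all contributions simultaneously. I would therefore present the argument purely through $P$ and its injective image under gluing, noting only in passing that the vanishing case corresponds to Proposition~\ref{prop:gluingzero}, to reassure the reader that the boundary behaviour at $y$ is correct. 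The rank–nullity inequality $\dim P\geq m_{G_1}(\lambda)+m_{G_2}(\lambda)-1$ together with injectivity then finishes the proof.
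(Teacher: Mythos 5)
Your argument is correct and is essentially the paper's proof: both rest on Propositions \ref{prop:gluingeigen} and \ref{prop:gluingzero} together with the observation that matching values at the glued vertex imposes a single scalar constraint on $E_1\oplus E_2$. The paper implements this by choosing bases of $E_1$ and $E_2$ adapted to the evaluation functionals at $x_1$ and $x_2$ and running a short case analysis on whether $f^1(x_1)$ or $f^2(x_2)$ vanishes, whereas you package the identical count as rank--nullity for the functional $(f^1,f^2)\mapsto f^1(x_1)-f^2(x_2)$ followed by injectivity of the gluing map; the two presentations are interchangeable.
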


\begin{proof}
    For simplicity, let 
    \begin{align*}
            m_1 &\coloneqq m_{G_1}\bigl(\lambda\bigr),\\
            m_2 &\coloneqq m_{G_2}\bigl(\lambda\bigr),\\
            m_{12} &\coloneqq m_{G_1[x_1]\oplus G_2[x_2]}\bigl(\lambda\bigr).
    \end{align*}
   Furthermore, let 
    \[
    \bigl\{f^1,g_1^1,\ldots,g^1_{m_1-1}\bigr\} \quad\text{and}\quad \bigl\{f^2,g_1^2,\ldots,g^2_{m_2-1}\bigr\}
    \]
    be (possibly empty) bases for the eigenspace of $\lambda$ as an eigenvalue of $G_1$ and $G_2$, respectively, such that $g^1_i(x_1)=0=g^2_j(x_2)$ for $1\leq i\leq m_1-1$ and $1\leq j\leq m_2-1$. Then, by Proposition \ref{prop:gluingzero}, the functions
    \[
    g^1_i\oplus_{x_1,x_2}\boldsymbol 0^2 \quad\text{and}\quad  \boldsymbol 0^1 \oplus_{x_1,x_2} g^2_j
    \]
    are $m_1+m_2-2$ linearly independent eigenfunctions of $G_1[x_1]\oplus G_2[x_2]$ with eigenvalue $\lambda$. Hence, if we construct one more eigenfunction, we are done. We consider two cases.
    \begin{itemize}
        \item[Case 1:] $f^1(x_1)=0$ or $f^2(x_2)=0$. In this case, $f^1\oplus_{x_1,x_2}\boldsymbol 0^2$ or $\boldsymbol 0^1 \oplus_{x_1,x_2} f^2$, respectively, is an eigenfunction of $G_1[x_1]\oplus G_2[x_2]$ with eigenvalue $\lambda$, and it is linearly independent from the $m_1+m_2-2$ eigenfunctions that we exhibited above.
        \item[Case 2:] $f^1(x_1)\neq 0$ and $f^2(x_2)\neq 0$. In this case we can assume, without loss of generality, that $f^1(x_1)=f^2(x_2)$.  By Proposition \ref{prop:gluingeigen}, the function $f^1\oplus_{x_1,x_2}f^2$ is an eigenfunction for $G_1[x_1]\oplus G_2[x_2]$ with eigenvalue $\lambda$, and it is linearly independent from the $m_1+m_2-2$ eigenfunctions that we exhibited above.
    \end{itemize}

    This concludes the proof.
\end{proof}

\begin{remark}
The inequality in Theorem \ref{thm:lambdamult} is not always an equality. To see this, consider the $m$-petal graph from Section \ref{subsec:listofgphs}, which can be seen as the $1$-sum of copies of $K_3$ (cf.\ Example \ref{ex:genpetalgraph}). This graph has eigenvalue $1/2$, which is not an eigenvalue of $K_3$.\end{remark}

At the end of this section we shall compute the multiplicity of the eigenvalue
\[
\lambda = \max\bigl\{ \lambda_{\max}(G_1), \lambda_{\max}(G_2)\bigr\}
\]
for $G_1[x_1]\oplus G_2[x_2]$, by looking at eigenfunctions. This will be a consequence of the theorem below,
which states that the largest eigenvalue of  $G_1[x_2]\oplus G_2[x_2]$ is bounded above by the largest eigenvalues of both $G_1$ and $G_2$.
This interlacing result complements the ones in \cite{butler-interlacing} and, to the best of our knowledge, it has not been proved before. Notably, Atay and B\i y\i ko\u{g}lu (2005) \cite{KirchoffLaplacian1-sum2005} proved a similar result to Equation \eqref{eq:1-sum}, but it is the inverse inequality, and it involves the eigenvalues of the Kirchoff Laplacian.
\begin{theorem}\label{thm:1-sum}
   We have that 
    \begin{equation}\label{eq:1-sum}
    \lambda_{\max}\bigl(G_1[x_1]\oplus G_2[x_2]\bigr) \leq \max\biggl\{ \lambda_{\max}(G_1),\lambda_{\max}(G_2)\biggr\}.
    \end{equation}
\end{theorem}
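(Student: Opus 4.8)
The plan is to use the variational characterization of the largest eigenvalue through the Rayleigh quotient, combined with the elementary mediant inequality recorded in Lemma \ref{lem:fractions}. Write $G\coloneqq G_1[x_1]\oplus G_2[x_2]$ and let $y$ be the identified vertex. For an arbitrary function $f\colon V(G)\to\R$, the key structural observation is that its restrictions $f^1\coloneqq f|_{G_1}$ and $f^2\coloneqq f|_{G_2}$ agree at the gluing vertex, $f^1(x_1)=f(y)=f^2(x_2)$, and that every edge of $G$ lies entirely inside the copy of $G_1$ or entirely inside the copy of $G_2$. Hence the numerator of $\RQ_G(f)$ splits cleanly as $N_1+N_2$, where $N_i\coloneqq\sum_{v\sim w\text{ in }G_i}\bigl(f^i(v)-f^i(w)\bigr)^2$.

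The only subtlety is the denominator, which hinges on the degree of $y$. First I would record that $\deg_G y=\deg_{G_1}x_1+\deg_{G_2}x_2$, while every other vertex retains the degree it had in its summand. Writing $D_i$ for the denominator $\sum_{v\in V(G_i)}\deg_{G_i}v\cdot f^i(v)^2$ of $\RQ_{G_i}(f^i)$, the contribution of $y$ to the denominator of $\RQ_G(f)$ equals $(\deg_{G_1}x_1+\deg_{G_2}x_2)f(y)^2$, which is exactly the sum of the contribution of $x_1$ to $D_1$ and of $x_2$ to $D_2$. Consequently the denominator of $\RQ_G(f)$ is precisely $D_1+D_2$, so that
\[
\RQ_G(f)=\frac{N_1+N_2}{D_1+D_2}.
\]
With this identity in hand I would apply Lemma \ref{lem:fractions} to conclude that $\RQ_G(f)\leq\max\bigl\{N_1/D_1,\,N_2/D_2\bigr\}=\max\bigl\{\RQ_{G_1}(f^1),\RQ_{G_2}(f^2)\bigr\}$, which is in turn at most $\max\{\lambda_{\max}(G_1),\lambda_{\max}(G_2)\}$ by the Rayleigh quotient characterization of the largest eigenvalue. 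Taking the maximum over all nonzero $f$ and invoking that characterization once more, now for $G$ itself, yields the claimed inequality \eqref{eq:1-sum}.

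The main obstacle is the degenerate case in which $D_1=0$ or $D_2=0$, since Lemma \ref{lem:fractions} requires strictly positive denominators. But $D_i=0$ forces $f^i$ to vanish on all of $V(G_i)$ (as $G$ has no isolated vertices), hence in particular $f(y)=0$ and $N_i=0$; in that situation $\RQ_G(f)$ reduces directly to $\RQ_{G_j}(f^j)$ for the other index $j$, and the bound holds immediately. Treating this case separately is routine, so the real content lies in the bookkeeping for the numerator and for $\deg_G y$, after which the mediant inequality does all the work.
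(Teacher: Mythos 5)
Your argument is correct and follows essentially the same route as the paper's proof: split the numerator and denominator of the Rayleigh quotient along the two summands (using that edges and degrees decompose across the identified vertex), apply the mediant inequality of Lemma \ref{lem:fractions}, and treat the case where one restriction vanishes separately. Your write-up is, if anything, slightly more explicit about why the denominator splits cleanly at the vertex $y$.
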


\begin{proof}
    Let $G$ denote $G_1[x_1]\oplus G_2[x_2]$ for simplicity.
    Let $f$ be such that $\RQ(f) = \lambda_{\max}\bigl(G\bigr)$, and let $f^i\colon V_i\to \R$ be the restriction of $f$ to $G_i$, for $i=1,2$. If $f^1 = \boldsymbol{0}^1$ , then the statement follows immediately, because in this case $\RQ(f) = \RQ(f^2)$. If $f^2 = \boldsymbol{0}^2$ , then the statement follows analogously. Otherwise, we can use Lemma \ref{lem:fractions} to infer that
    \begin{align*}
        \lambda_{\max}\bigl(G\bigr) &= \RQ(f) \\
        &= \frac{\sum_{\substack{v,w\in V\bigl(G\bigr)\colon\\ v\sim w}}\biggl(f(v)-f(w)\biggr)^2}{\sum_{v\in V\bigl(G\bigr)}\deg_G v f(v)^2}\\
        &= \frac{\sum_{\substack{v,w\in V(G_1)\colon\\ v\sim w}}\biggl(f(v)-f(w)\biggr)^2 + \sum_{\substack{v,w\in V(G_2)\colon\\ v\sim w}}\biggl(f(v)-f(w)\biggr)^2}{\sum_{v\in V(G_1)}\deg_{G_1} v f(v)^2 + \sum_{v\in V(G_2)}\deg_{G_2} v f(v)^2}\\
        &\leq \max\left\{ \frac{\sum_{\substack{v,w\in V(G_1)\colon\\ v\sim w}}\biggl(f(v)-f(w)\biggr)^2}{\sum_{v\in V(G_1)}\deg_{G_1} v f(v)^2},\frac{\sum_{\substack{v,w\in V(G_2)\colon\\ v\sim w}}\biggl(f(v)-f(w)\biggr)^2}{\sum_{v\in V(G_2)}\deg_{G_2} v f(v)^2}\right\}\\
        &= \max\{\RQ\bigl(f^1\bigr),\RQ\bigl(f^2\bigr)\}\\
        &\leq \max\biggl\{ \lambda_{\max}(G_1),\lambda_{\max}(G_2)\biggr\}.\qedhere
    \end{align*}
\end{proof}

The following is an immediate corollary of Theorem \ref{thm:1-sum} and Theorem \ref{thm:3.1DS}.
\begin{corollary}\label{cor:gluing}
    Let $G_1$ and $G_2$ be two graphs with the same coloring number $\chi$, such that $\lambda_{\max}(G_1) = \lambda_{\max}(G_2) = \chi/(\chi-1)$.
    Then,
    \[
    \lambda_{\max}\bigl(G_1[x_1]\oplus G_2[x_2]\bigr) =\frac{\chi}{\chi-1}.
    \]
\end{corollary}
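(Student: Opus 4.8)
The plan is to derive this as a direct combination of the upper bound from Theorem~\ref{thm:1-sum} with the lower bound from Theorem~\ref{thm:3.1DS}, using Remark~\ref{rmk:colnmbr} to pin down the coloring number of the $1$-sum. First I would invoke Remark~\ref{rmk:colnmbr}, which states that
\[
\chi\bigl( G_1[x_1]\oplus G_2[x_2]\bigr) = \max\bigl\{ \chi(G_1),\chi(G_2)\bigr\}.
\]
Since both $G_1$ and $G_2$ are assumed to have the same coloring number $\chi$, this immediately gives that $G_1[x_1]\oplus G_2[x_2]$ also has coloring number $\chi$.

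Next I would apply the upper bound. By hypothesis $\lambda_{\max}(G_1) = \lambda_{\max}(G_2) = \chi/(\chi-1)$, so Theorem~\ref{thm:1-sum} yields
\[
\lambda_{\max}\bigl(G_1[x_1]\oplus G_2[x_2]\bigr) \leq \max\biggl\{ \lambda_{\max}(G_1),\lambda_{\max}(G_2)\biggr\} = \frac{\chi}{\chi-1}.
\]
For the matching lower bound, I would apply Theorem~\ref{thm:3.1DS} to the graph $G_1[x_1]\oplus G_2[x_2]$ itself: since this graph has coloring number $\chi$, the general bound \eqref{eq:ourbound} gives
\[
\lambda_{\max}\bigl(G_1[x_1]\oplus G_2[x_2]\bigr) \geq \frac{\chi}{\chi-1}.
\]
Combining the two inequalities forces equality, which is exactly the claim.

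The proof is essentially a one-line sandwich argument, so there is no genuine obstacle in the deduction itself; all the real work has already been done in the preceding results. The only point requiring any care is a bookkeeping check that the hypotheses line up: one must confirm that the $1$-sum is connected (so that the normalized Laplacian spectral framework applies and $\chi/(\chi-1)$ is well defined as $\lambda_{\max}$), and that the common coloring number $\chi$ is indeed the coloring number of the $1$-sum rather than something larger or smaller, which is precisely what Remark~\ref{rmk:colnmbr} guarantees. Beyond verifying these compatibility conditions, the argument is immediate, which is why this statement is phrased as a corollary of Theorems~\ref{thm:1-sum} and~\ref{thm:3.1DS} rather than proved from scratch.
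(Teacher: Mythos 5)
Your proposal is correct and matches the paper exactly: the paper presents this as an immediate consequence of Theorem~\ref{thm:1-sum} (upper bound) and Theorem~\ref{thm:3.1DS} (lower bound), with Remark~\ref{rmk:colnmbr} identifying the coloring number of the $1$-sum as $\chi$. The sandwich argument you describe is precisely the intended deduction.
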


As a consequence of Corollary \ref{cor:gluing}, given any two graphs with the same coloring number $\chi$ and with largest eigenvalue $\chi/(\chi-1)$, we can construct a new graph which also has coloring number $\chi$ (by Remark \ref{rmk:colnmbr}) and largest eigenvalue $\chi/(\chi-1)$, by taking their $1$-sum with respect to any pair of vertices.\\

The following theorem tells us when the inequality in Theorem \ref{thm:1-sum} is an equality and, for this case, it also gives us the multiplicity of the largest eigenvalue.

\begin{theorem}\label{thm:1-summultiplicity}
    If $\lambda_{\max}(G_1)\geq \lambda_{\max}(G_2)$, then
    \begin{align*}
         &m_{G_1[x_1]\oplus G_2[x_2]}\bigl(\lambda_{\max}(G_1)\bigr) \in \\ &\quad \bigl\{ m_{G_1}\bigl(\lambda_{\max}(G_1)\bigr) + m_{G_2}\bigl(\lambda_{\max}(G_1)\bigr), m_{G_1}\bigl(\lambda_{\max}(G_1)\bigr) + m_{G_2}\bigl(\lambda_{\max}(G_1)\bigr)-1 \bigr\}.
    \end{align*}
            
    More specifically, we have the following two cases.
    \begin{enumerate}[label=(\arabic*)]
        \item Assume that, for $i=1,2$, for all $h_i\colon V(G_i)\to \R$ such that $\RQ_{G_i}(h_i) = \lambda_{\max}(G_1)$, we have that $h_1(x_1)=0$ and $h_2(x_2)=0$. In this case, we have that
        \[
        m_{G_1[x_1]\oplus G_2[x_2]}\bigl(\lambda_{\max}(G_1)\bigr) = m_{G_1}\bigl(\lambda_{\max}(G_1)\bigr) + m_{G_2}\bigl(\lambda_{\max}(G_1)\bigr).
        \]
        \item Otherwise, we have that
        \[
        m_{G_1[x_1]\oplus G_2[x_2]}\bigl(\lambda_{\max}(G_1)\bigr) = m_{G_1}\bigl(\lambda_{\max}(G_1)\bigr) + m_{G_2}\bigl(\lambda_{\max}(G_1)\bigr)-1.
        \]
    \end{enumerate}
\end{theorem}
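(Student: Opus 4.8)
The plan is to pin down the $\lambda$-eigenspace of the $1$-sum, where $\lambda\coloneqq\lambda_{\max}(G_1)$, by exhibiting an explicit linear isomorphism between it and a space built from the eigenspaces of $G_1$ and $G_2$, and then to compute a dimension. Write $E_i$ for the $\lambda$-eigenspace of $G_i$, so $\dim E_1=m_{G_1}(\lambda)$ and $\dim E_2=m_{G_2}(\lambda)$, and write $E$ for the $\lambda$-eigenspace of $G\coloneqq G_1[x_1]\oplus G_2[x_2]$. I would introduce the linear functional $\Psi\colon E_1\times E_2\to\R$, $\Psi(f^1,f^2)\coloneqq f^1(x_1)-f^2(x_2)$, and the gluing map $\Phi$ sending $(f^1,f^2)\in\ker\Psi$ to $f^1\oplus_{x_1,x_2}f^2$ (well defined precisely because $f^1(x_1)=f^2(x_2)$ on $\ker\Psi$). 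The goal is to show that $\Phi$ is a linear isomorphism from $\ker\Psi$ onto $E$; then $m_G(\lambda)=\dim\ker\Psi=m_{G_1}(\lambda)+m_{G_2}(\lambda)-\operatorname{rank}\Psi$, and the whole statement drops out since $\operatorname{rank}\Psi\in\{0,1\}$.

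The easy half is that $\Phi$ maps $\ker\Psi$ injectively into $E$. Injectivity is immediate, as $f^1$ and $f^2$ are recovered as the restrictions of $f^1\oplus_{x_1,x_2}f^2$ to $G_1$ and $G_2$. That the image lies in $E$ is exactly Proposition \ref{prop:gluingeigen} when both $f^1,f^2$ are nonzero, and Proposition \ref{prop:gluingzero} (or a direct check of \eqref{eq:eigenpair}, which holds trivially for the zero function at the glued vertex) when one of them vanishes. In particular this already yields $m_G(\lambda)\geq\dim\ker\Psi$, recovering the lower bound of Theorem \ref{thm:lambdamult} in the present situation.

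The crux is surjectivity: every $\lambda$-eigenfunction of $G$ arises by gluing. Here I would exploit that $\lambda$ is the \emph{largest} eigenvalue of $G$: by Theorem \ref{thm:1-sum} we have $\lambda_{\max}(G)\leq\lambda$, so whenever $E\neq\{0\}$ we get $\lambda_{\max}(G)=\lambda$ and every $f\in E$ maximizes $\RQ_G$. Given such an $f$, its restrictions $f^1,f^2$ automatically agree at the glued vertex, so $f^1(x_1)=f^2(x_2)$. Splitting the numerator and denominator of $\RQ_G(f)$ across the two sides exactly as in the proof of Theorem \ref{thm:1-sum} writes $\RQ_G(f)=\tfrac{a+b}{c+d}$ with $a/c=\RQ_{G_1}(f^1)$ and $b/d=\RQ_{G_2}(f^2)$. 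Since $\RQ_{G_i}(f^i)\leq\lambda_{\max}(G_i)\leq\lambda$ while $\RQ_G(f)=\lambda$, the equality clause of Lemma \ref{lem:fractions} forces $\RQ_{G_1}(f^1)=\RQ_{G_2}(f^2)=\lambda$, hence $f^1\in E_1$ and $f^2\in E_2$ and $(f^1,f^2)\in\ker\Psi$ with $\Phi(f^1,f^2)=f$. This gives $E=\Phi(\ker\Psi)$.

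The main obstacle, and the place demanding care, is the boundary behaviour of this argument. Lemma \ref{lem:fractions} requires strictly positive entries, so I must first rule out the degenerate subcases: a restriction that is identically zero (where $f$ reduces to a one-sided eigenfunction vanishing at the glued vertex, and still lies in the image of $\Phi$), and a nonzero but constant restriction (excluded because $\lambda=\lambda_{\max}(G_1)\geq\chi/(\chi-1)>1$ gives $\lambda>0$, so a vanishing numerator is impossible). One further edge case is $E=\{0\}$, which can occur only when $m_{G_2}(\lambda)=0$ and the unique top eigenfunction of $G_1$ is nonzero at $x_1$; there surjectivity is vacuous, and injectivity of $\Phi$ already forces $\dim\ker\Psi=0=m_G(\lambda)$. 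With $m_G(\lambda)=\dim\ker\Psi$ established in all cases, I would finish by translating the rank dichotomy: $\operatorname{rank}\Psi=0$ means $f^1(x_1)=f^2(x_2)=0$ for every $(f^1,f^2)\in E_1\times E_2$, i.e.\ every function with Rayleigh quotient $\lambda$ on either side vanishes at $x_i$, which is precisely hypothesis (1) and yields $m_G(\lambda)=m_{G_1}(\lambda)+m_{G_2}(\lambda)$; otherwise $\operatorname{rank}\Psi=1$ and $m_G(\lambda)=m_{G_1}(\lambda)+m_{G_2}(\lambda)-1$.
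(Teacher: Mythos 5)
Your proposal is correct and takes essentially the same route as the paper's proof: the paper's explicit construction of bases adapted to vanishing at the glued vertex is exactly your rank--nullity computation for $\Psi$, and its opening observation that the restrictions of a top eigenfunction are either zero or top eigenfunctions of the summands is your surjectivity step. If anything, you justify that step (via the equality case of Lemma \ref{lem:fractions}, with the degenerate zero-numerator subcases handled separately) in more detail than the paper, which simply attributes it to the min-max principle.
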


\begin{proof}
   Let $G\coloneqq G_1[x_1]\oplus G_2[x_2]$, and let
    \begin{align*}
            m_1 &\coloneqq m_{G_1}\bigl(\lambda_{\max}(G_1)\bigr),\\
            m_2 &\coloneqq m_{G_2}\bigl(\lambda_{\max}(G_1)\bigr),\\
            m_{12} &\coloneqq m_{G_1[x_1]\oplus G_2[x_2]}\bigl(\lambda_{\max}(G_1)\bigr).
    \end{align*}
 Observe that, if $f\colon V(G)\to\R$ is an eigenfunction for $G$ with eigenvalue $\lambda_{\max}(G_1)$, then exactly one of the following is true:
    \begin{itemize}
        \item $f|_{G_1} = \boldsymbol 0^1$ and $\RQ(f|_{G_2}) = \lambda_{\max}(G_1)$,
        \item $\RQ(f|_{G_1}) = \lambda_{\max}(G_1)$ and $f|_{G_2} = \boldsymbol 0^2$, or
        \item $\RQ(f|_{G_1}) = \lambda_{\max}(G_1)$ and $\RQ(f|_{G_2}) = \lambda_{\max}(G_1)$.
    \end{itemize}

     From the min-max Principle it follows that, for at least one $i\in\{1,2\}$, $f|_{G_i}$ is an eigenfunction for $G_i$ with eigenvalue $\lambda_{\max}(G_1)$, and for at most one $i\in\{1,2\}$, $f|_{G_i}$ is the zero function on $G_i$. This allows us to give a basis for the eigenspace of $\lambda_{\max}(G_1)$ for $G$, in terms of bases for the eigenspace of this eigenvalue for $G_1$ and $G_2$.
    As in the proof of Theorem \ref{thm:lambdamult}, we let 
    \[
    \bigl\{f^1,g_1^1,\ldots,g^1_{m_1-1}\bigr\} \quad \text{and} \quad \bigl\{f^2,g_1^2,\ldots,g^2_{m_2-1}\bigr\}
    \]
    denote (possibly empty) bases for the eigenspace of $\lambda_{\max}(G_1)$
    as an eigenvalue of $G_1$ and $G_2$, respectively, such that $g_j^1(x_1)=0$ for $1\leq j\leq m_1-1$ and $g_j^2(x_2) = 0$ for $1\leq j\leq m_2-1$.
    We consider two cases.
    \begin{enumerate}[label=(\arabic*)]
        \item If $f^1(x_1)=0$ and $f^2(x_2) = 0$, then 
        \begin{align*}
        &\bigl\{f^1\oplus_{x_1,x_2}\boldsymbol{0}^2,g^1_1\oplus_{x_1,x_2}\boldsymbol{0}^2,\ldots,g^1_{m_1-1}\oplus_{x_1,x_2}\boldsymbol{0}^2\bigr\} \cup \\ &\bigl\{\boldsymbol{0}^1\oplus_{x_1,x_2}f^2,\boldsymbol{0}^1\oplus_{x_1,x_2}g_1^2,\ldots,\boldsymbol{0}^1\oplus_{x_1,x_2}g^2_{m_2-1}\bigr\}
        \end{align*}
        is a basis for the eigenspace of  the eigenvalue $\lambda_{\max}(G_1)$ for $G$ of size $m_1+m_2$.
        \item If one of $f^1(x_1)$ and $f^2(x_2)$ is non-zero, then
        we have one of the following three subcases.
        \begin{enumerate}[label=(\roman*)]
            \item If $f^1(x_1)=0$ and $f^2(x_2)\neq 0$, then
            \[
            \bigl\{f^1\oplus_{x_1,x_2}\boldsymbol{0}^2,g^1_1\oplus_{x_1,x_2}\boldsymbol{0}^2,\ldots,g^1_{m_1-1}\oplus_{x_1,x_2}\boldsymbol{0}^2\bigr\} \cup \bigl\{\boldsymbol{0}^1\oplus_{x_1,x_2}g_1^2,\ldots,\boldsymbol{0}^1\oplus_{x_1,x_2}g^2_{m_2-1}\bigr\}
            \]
            is a basis for the eigenspace of $\lambda_{\max}(G_1)$ for $G$ of size $m_1+m_2-1$.
            
            \item Analogously, if $f^1(x_1)\neq 0$ and $f^2(x_2)= 0$, then
            \[
            \bigl\{g^1_1\oplus_{x_1,x_2}\boldsymbol{0}^2,\ldots,g^1_{m_1-1}\oplus_{x_1,x_2}\boldsymbol{0}^2\bigr\} \cup \bigl\{\boldsymbol{0}^1\oplus_{x_1,x_2}f^2,\boldsymbol{0}^1\oplus_{x_1,x_2}g_1^2,\ldots,\boldsymbol{0}^1\oplus_{x_1,x_2}g^2_{m_2-1}\bigr\}
            \]
            is a basis for the eigenspace of $\lambda_{\max}(G_1)$ for $G$ of size $m_1+m_2-1$.
            
            \item If $f^1(x_1)\neq 0$ and $f^2(x_2)\neq 0$, then we  assume that $f^1(x_1) = f^2(x_2)$, and we have that
            \[
            \bigl\{f^1\oplus_{x_1,x_2}f^2\bigr\}\cup\bigl\{g^1_1\oplus_{x_1,x_2}\boldsymbol{0}^2,\ldots,g^1_{m_1-1}\oplus_{x_1,x_2}\boldsymbol{0}^2\bigr\} \cup \bigl\{\boldsymbol{0}^1\oplus_{x_1,x_2}g_1^2,\ldots,\boldsymbol{0}^1\oplus_{x_1,x_2}g^2_{m_2-1}\bigr\}
            \]
            is a basis for the eigenspace of $\lambda_{\max}(G_1)$ for $G$ of size $m_1+m_2-1$.
        \end{enumerate}
        In all three subcases, we have that $m_{12}=m_1+m_2-1$.\qedhere
    \end{enumerate}
\end{proof}

As a corollary of Theorem \ref{thm:1-summultiplicity}, we can now give the multiplicity of the eigenvalue $\chi/(\chi-1)$ for the $1$-sum of two graphs that have coloring number $\chi$ and largest eigenvalue $\chi/(\chi-1)$.
\begin{corollary}\label{cor:1-summult}
    Let $G_1$ and $G_2$ be graphs that have the same coloring number $\chi$ and largest eigenvalue $$\lambda_{\max}(G_1) = \lambda_{\max}(G_2) = \frac{\chi}{\chi-1}.$$
    Then, the multiplicity of the largest eigenvalue $\chi/(\chi-1)$ of $G_1[x_1]\oplus G_2[x_2]$ is
    \[
    m_{G_1[x_1]\oplus G_2[x_2]}\biggl(\frac{\chi}{\chi-1}\biggr) = m_{G_1}\biggl(\frac{\chi}{\chi-1}\biggr) + m_{G_2}\biggl(\frac{\chi}{\chi-1}\biggr) - 1.
    \]
    Furthermore, the eigenfunctions corresponding to $\lambda_{\max}\bigl(G_1[x_1]\oplus G_2[x_2]\bigr)$ are precisely the non-zero functions $f$ such that $f|_{G_1}$ is either the zero function or an eigenfunction for $G_1$ with eigenvalue $\chi/(\chi-1)$, and $f|_{G_2}$ is either the zero function or an eigenfunction for $G_2$ with eigenvalue $\chi/(\chi-1)$.
\end{corollary}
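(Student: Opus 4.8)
The plan is to read the corollary off from Theorem \ref{thm:1-summultiplicity} once we verify that we are in its second case. Since $\lambda_{\max}(G_1)=\lambda_{\max}(G_2)=\chi/(\chi-1)$, the hypothesis $\lambda_{\max}(G_1)\geq\lambda_{\max}(G_2)$ of that theorem holds, and Corollary \ref{cor:gluing} already tells us that $\lambda_{\max}(G_1[x_1]\oplus G_2[x_2])=\chi/(\chi-1)$. Writing $m_i\coloneqq m_{G_i}(\chi/(\chi-1))$, Theorem \ref{thm:1-summultiplicity} then pins the multiplicity of $\chi/(\chi-1)$ for the $1$-sum down to either $m_1+m_2$ (Case (1)) or $m_1+m_2-1$ (Case (2)), so the whole computation reduces to excluding Case (1).

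The main step, and the only one needing real input, is to exhibit for at least one $i\in\{1,2\}$ an eigenfunction of $G_i$ with eigenvalue $\chi/(\chi-1)$ that does not vanish at $x_i$; this negates the defining hypothesis of Case (1). I would argue for $G_1$, the argument for $G_2$ being identical. Fix a proper $\chi$-coloring of $G_1$ with classes $V_1,\ldots,V_\chi$, relabelled so that $x_1\in V_1$. Because $\lambda_{\max}(G_1)=\chi/(\chi-1)$, Theorem \ref{thm:edgespread} guarantees that this coloring is equitable with respect to $D^{-1}A$; hence, by Proposition \ref{prop:eigenfns} applied with $k=\chi$, the function $f_{12}$ of Definition \ref{def:f_ij} is an eigenfunction of $G_1$ with eigenvalue $\chi/(\chi-1)$. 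Since $x_1\in V_1$ we have $f_{12}(x_1)=1\neq 0$, so Case (1) is impossible and Theorem \ref{thm:1-summultiplicity} yields $m_{G_1[x_1]\oplus G_2[x_2]}(\chi/(\chi-1))=m_1+m_2-1$, as claimed. The subtle point here is that one must use the \emph{equitability} consequence of sharpness (Theorem \ref{thm:edgespread}) to know that $f_{12}$ is genuinely an eigenfunction, not merely a test function realizing the Rayleigh bound.

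For the description of the eigenfunctions, I would combine the trichotomy established at the opening of the proof of Theorem \ref{thm:1-summultiplicity} with the gluing Propositions \ref{prop:gluingeigen} and \ref{prop:gluingzero}. The forward direction is precisely that trichotomy: any eigenfunction $f$ of the $1$-sum with eigenvalue $\chi/(\chi-1)$ restricts on each $G_i$ to either the zero function or an eigenfunction of $G_i$ with the same eigenvalue, and is not zero on both. For the converse, let $f$ be a non-zero function whose restrictions $f|_{G_1}$ and $f|_{G_2}$ are each either zero or an eigenfunction of eigenvalue $\chi/(\chi-1)$; since both restrictions automatically carry the common value $f(y)$ at the identified vertex, Proposition \ref{prop:gluingzero} (when one restriction vanishes) or Proposition \ref{prop:gluingeigen} (when both are eigenfunctions) shows that $f$ is an eigenfunction of the $1$-sum with eigenvalue $\chi/(\chi-1)$. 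This gives the stated characterization, and since all the heavy lifting is done by Theorem \ref{thm:1-summultiplicity}, the only genuine obstacle is the non-vanishing eigenfunction produced in the previous paragraph.
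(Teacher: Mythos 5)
Your proposal is correct and follows essentially the same route as the paper: both reduce the multiplicity count to Theorem \ref{thm:1-summultiplicity} and exclude the ``all eigenfunctions vanish at the glued vertex'' case by exhibiting the coloring-class eigenfunction $f_{12}$ (an eigenfunction by Theorem \ref{thm:edgespread} and Proposition \ref{prop:eigenfns}), which is non-zero at $x_i$. The paper states this more tersely by asserting directly that one is in Case (2)(iii) and reading the eigenfunction description off the basis given there, whereas you spell out the equitability step and the gluing argument for the converse direction; these are presentational differences only.
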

\begin{proof}
    We are in the setting of Case (2)(iii) in the proof of Theorem \ref{thm:1-summultiplicity}, 
    since we have that $f_{ij}^1\colon V(G_1)\to\R$ from Definition \ref{def:f_ij} is non-zero on $x_1$, for $x_1\in V_i$, and we have that $f_{kl}^2\colon V(G_2)\to\R$ is non-zero on $x_2$, for $x_2\in V_k$.
    This proves the first part of the corollary. The second part follows by looking at the basis that is given in Case (2)(iii) in the proof of Theorem \ref{thm:1-summultiplicity}.
\end{proof}
\begin{example}\label{ex:multN-2}
   Consider the generalized petal graphs from Example \ref{ex:genpetalgraph}. As a consequence of Corollary \ref{cor:1-summult} we have that, for $n\geq 2$, the graph $$G\coloneqq\bigoplus_{i=1}^m K_n^{(i)},$$
    which has coloring number $\chi = n$, has largest eigenvalue
    \[
    \lambda_{\max}(G) = \frac{\chi}{\chi-1}
    \]
with multiplicity $m(n-1)-m+1 = |V(G)|-m$.

As two particular cases (for $m=1$ and $m=2$, respectively),
\begin{itemize}
    \item The complete graph $K_N$ has largest eigenvalue $\chi/(\chi-1)$ with multiplicity $N-1$, and it is well-known that this is the only connected graph with an eigenvalue that has multiplicity $N-1$. Therefore, the complete graph is the only graph that has largest eigenvalue $\chi/(\chi-1)$ with multiplicity $N-1$.
    \item By Proposition 8 in \cite{VanDamOmidi2011}, the generalized petal graph $K_n^{(1)}\oplus K_n^{(2)}$ is the only graph that has largest eigenvalue $\chi/(\chi-1)$ with multiplicity $N-2$.
\end{itemize}    
\end{example}
In Example \ref{ex:multN-2}, we characterized graphs with largest eigenvalue $\chi/(\chi-1)$ whose multiplicity equals $N-1$ or $N-2$, respectively. Graphs with $\lambda_N=\chi/(\chi-1)$ whose multiplicity equals $N-3$ have also been characterized, and this result can be found in \cite{TianWang2021}.
We may ask the following question:
\begin{question}\label{qu:multN-4}
    Which graphs have largest eigenvalue $\chi/(\chi-1)$ with multiplicity $N-k$, where $k\geq4$ is relatively small compared to $N$?
\end{question}
\begin{remark}
    The $1$-sum of two complete graphs of the same size, $K_n^{(1)}\oplus K_n^{(2)}$, also has the property that its largest eigenvalue equals $(N+1)/(N-1)$. The only other graphs which have this property are complete graphs with one edge removed. All other non-complete graphs have largest eigenvalue strictly bigger than $(N+1)/(N-1)$, as proven by Sun and Das (2016) \cite{SunDas2016} and by Jost, Mulas and Münch (2021) \cite{JMM}.
\end{remark}

\subsection{Generalizing the \texorpdfstring{$1$}{1}-sum}

Different generalizations of the $1$-sum have been introduced in varying contexts. One of these is called the \emph{clique-sum}, the \emph{$k$-clique-sum} or the \emph{$k$-sum}, depending on the reference, and it is used, for example, in the proof of the Structure Theorem from Robertson and Seymour (2003) \cite{RobertsonSeymour} on the structure of graphs for which no minor is isomorphic to a fixed graph $H$. The idea of the $k$-clique-sum, for a positive integer $k$, is to first glue $G_1$ and $G_2$ together at a $k$-clique, and then remove either no, all or some of the edges of this $k$-clique in the new graph. 

However, for the $k$-clique-sum,we cannot generalize Theorem \ref{thm:1-sum}. To see this, consider a $2$-clique-sum of two copies of $K_3$, both having largest eigenvalue $\lambda_{\max}(K_3) = 3/2$. In this case, the $2$-clique-sum can  either be $C_4$ or $K_4\setminus \{e\}$ (depending on whether we remove the edge, i.e.\ the $2$-clique, in which we glue the graphs together). The former has largest eigenvalue $\lambda_{\max}(C_4) = 2$, and the latter has largest eigenvalue $\lambda_{\max}(K_4\setminus\{e\}) = 5/3$. Both these values are bigger than $\lambda_{\max}(K_3)$. In Theorem \ref{thm:1-sum} we saw, in contrast, that the opposite inequality is true for the $1$-sum.

The aim of this subsection is to offer a different generalization of the $1$-sum, for which we can prove a generalization of Theorem \ref{thm:1-sum}.

\begin{definition}\label{def:k-union}
    Let $G_1$ and $G_2$ be graphs such that \[E(G_1)\cap E(G_2) = \varnothing.\] Their \emph{edge-disjoint union} is the graph $G_1\sqcup_E G_2$ with vertex set \[V(G_1\sqcup_E G_2) := V(G_1)\cup V(G_2)\]
    and edge set
    \[E(G_1\sqcup_E G_2) := E(G_1)\sqcup E(G_2).\]
\end{definition}
From here on, we fix two graphs $G_1$ and $G_2$ such that $E(G_1)\cap E(G_2)=\varnothing$.
\begin{remark}
    If $|V(G_1)\cap V(G_2)|=0$, then the edge-disjoint union of $G_1$ and $G_2$ is simply the disjoint union of $G_1$ and $G_2$.
    If $|V(G_1)\cap V(G_2)|=1$, then the edge-disjoint union and the $1$-sum of $G_1$ and $G_2$ coincide.
\end{remark}
We extend our results on the gluing of eigenfunctions from the context of the $1$-sum to edge-disjoint unions. To achieve this, we first introduce notation that facilitates the gluing of functions in edge-disjoint unions. This new definition generalizes Definition \ref{def:gluefunctions}.
\begin{definition}\label{def:gluefunctionsgen}
    Let $f^1\colon V(G_1)\to\R$ and $f^2\colon V(G_2)\to\R$ be two functions such that, for all $v\in V(G_1)\cap V(G_2)$, we have that
    $f^1(v) = f^2(v)$. Then, we let 
    \[
    f^1 \sqcup_E f^2\colon V(G_1 \sqcup_E G_2)\to \R
    \]
    be the function such that
    \[
    \bigl( f^1 \sqcup_E f^2\bigr)|_{G_1} = f^1 \text{ and } \bigl( f^1 \sqcup_E f^2\bigr)|_{G_2} = f^2.
    \]
\end{definition}
We are now prepared to state the two propositions that generalize Proposition \ref{prop:gluingeigen} and Proposition \ref{prop:gluingzero}, respectively. The proofs of these propositions follow similarly to the proofs of their counterparts for the $1$-sum.
\begin{proposition}
    Assume that $G_1$ and $G_2$ have a common eigenvalue $\lambda$, and that there exist corresponding eigenfunctions $f^i\colon V(G_i)\to \R$, for $i=1,2$, such that  $f^1(v) = f^2(v)$ for all $v\in V(G_1)\cap V(G_2)$. Then,  $f^1\sqcup_E f^2$ is an eigenfunction for $G_1 \sqcup_E G_2$ with eigenvalue $\lambda$.
\end{proposition}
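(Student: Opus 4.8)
The plan is to verify directly that $f^1 \sqcup_E f^2$ satisfies the eigenvalue equation \eqref{eq:eigenpair} at every vertex of $G_1 \sqcup_E G_2$, exactly mirroring the proof of Proposition \ref{prop:gluingeigen}. Writing $f \coloneqq f^1 \sqcup_E f^2$, I would split the vertices of $G_1 \sqcup_E G_2$ into three types: those in $V(G_1)\setminus V(G_2)$, those in $V(G_2)\setminus V(G_1)$, and the shared vertices in $V(G_1)\cap V(G_2)$. The first two cases are immediate, since such a vertex $v$ has the same neighborhood in $G_1 \sqcup_E G_2$ as it does in the single graph $G_i$ containing it (because the edge sets are disjoint and the only overlap is at shared vertices, whose edges still all belong to one of the two edge sets); hence $\deg_{G_1\sqcup_E G_2} v = \deg_{G_i} v$ and the eigenfunction equation for $f^i$ transfers verbatim.

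The shared vertices are where the argument has content, and this is the analogue of the computation for the vertex $y$ in Proposition \ref{prop:gluingeigen}. For $v \in V(G_1)\cap V(G_2)$, its neighbors in $G_1 \sqcup_E G_2$ are precisely the $G_1$-neighbors together with the $G_2$-neighbors (a disjoint union by the edge-disjointness hypothesis), so $\deg_{G_1\sqcup_E G_2} v = \deg_{G_1} v + \deg_{G_2} v$. I would then compute $\tfrac{1}{\deg v}\sum_{w\sim v} f(w)$ as a convex combination, writing the sum over all neighbors as the sum over $G_1$-neighbors plus the sum over $G_2$-neighbors, and factoring out $\deg_{G_i} v / (\deg_{G_1} v + \deg_{G_2} v)$ from each piece. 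Since $f^1(v) = f^2(v) = f(v)$ by the hypothesis of Definition \ref{def:gluefunctionsgen}, and each $f^i$ satisfies $(1-\lambda)f^i(v) = \tfrac{1}{\deg_{G_i} v}\sum_{w\sim_{G_i} v} f^i(w)$, both convex-combination terms equal $(1-\lambda)f(v)$, so the weighted average is again $(1-\lambda)f(v)$, as required by \eqref{eq:eigenpair}.

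The main subtlety — really the only place to be careful — is confirming that the neighborhood of a shared vertex genuinely decomposes as a disjoint union with no double-counting and no cross edges omitted. This follows from $E(G_1)\cap E(G_2) = \varnothing$: an edge incident to $v$ lies in exactly one of $E(G_1)$, $E(G_2)$, so the two neighbor-sums partition the full sum in $G_1 \sqcup_E G_2$. One mild point worth noting is that a vertex $w$ could be a neighbor of $v$ in both $G_1$ and $G_2$ only via two distinct edges, but edge-disjointness forbids this, so even the vertex-level neighbor sets are handled cleanly. With these observations, the verification of \eqref{eq:eigenpair} at all three vertex types is complete, and I would conclude that $f^1 \sqcup_E f^2$ is an eigenfunction of $G_1 \sqcup_E G_2$ with eigenvalue $\lambda$.
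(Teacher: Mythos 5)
Your proposal is correct and follows exactly the route the paper intends: the paper omits the details and simply notes that the proof is analogous to that of Proposition \ref{prop:gluingeigen}, and your verification of \eqref{eq:eigenpair} at the three vertex types, with the convex-combination computation at shared vertices and the edge-disjointness guaranteeing that neighborhoods decompose without double-counting, is precisely that analogous argument.
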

\begin{proposition}
    Assume that, for some eigenvalue $\lambda$ of $G_1$, there exists a corresponding eigenfunction $f^1\colon V(G_1)\to\R$ such that, for all $v\in V(G_1)\cap V(G_2)$, we have that $f^1(v) = 0$. Then, $f^1\sqcup_E \boldsymbol 0^2$ is an eigenfunction for $G_1\sqcup_E G_2$ with eigenvalue $\lambda$.
\end{proposition}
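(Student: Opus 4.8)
The plan is to verify the eigenvalue equation \eqref{eq:eigenpair} directly for the glued function $g \coloneqq f^1 \sqcup_E \boldsymbol{0}^2$ on $G \coloneqq G_1 \sqcup_E G_2$, mimicking the proof of its $1$-sum analogue, Proposition \ref{prop:gluingzero}. First I would record that $g$ is well defined by Definition \ref{def:gluefunctionsgen}: on the shared set $V(G_1)\cap V(G_2)$ we have $f^1(v) = 0 = \boldsymbol{0}^2(v)$, so the two pieces agree. I would also note that $g$ is nonzero, since $f^1$ is a nonzero eigenfunction and $g|_{G_1} = f^1$.

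The key preliminary observation is a degree and neighborhood decomposition coming from edge-disjointness (Definition \ref{def:k-union}): since $E(G) = E(G_1)\sqcup E(G_2)$, no edge lies in both parts, so for every $v$ the $G_1$-neighbors and the $G_2$-neighbors of $v$ are counted without overlap, giving $\deg_G v = \deg_{G_1} v + \deg_{G_2} v$ (with the convention $\deg_{G_i} v = 0$ when $v\notin V(G_i)$). Moreover, any edge of $G_i$ has both endpoints in $V(G_i)$, so every $G_i$-neighbor of $v$ lies in $V(G_i)$.

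Then I would check \eqref{eq:eigenpair} for $g$ case by case, according to whether $v$ lies in $V(G_1)\setminus V(G_2)$, in $V(G_2)\setminus V(G_1)$, or in $V(G_1)\cap V(G_2)$. In the first case all neighbors of $v$ lie in $V(G_1)$, there $g$ restricts to $f^1$, one has $\deg_G v = \deg_{G_1} v$, and the equation is exactly the eigenequation for $f^1$ on $G_1$. In the second case $g$ vanishes at $v$ and at all its (necessarily $G_2$-side) neighbors, so both sides of \eqref{eq:eigenpair} are $0$.

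The case that carries the content, and the step I expect to be the only real obstacle, is $v \in V(G_1)\cap V(G_2)$: here I would use the hypothesis $f^1(v)=0$, hence $g(v)=0$, so the left-hand side of \eqref{eq:eigenpair} is $0$; on the right, I split the neighbor sum into the $G_1$-part and the $G_2$-part. The $G_2$-part vanishes because every $G_2$-neighbor $w$ lies in $V(G_2)$, where $g(w)$ equals either $\boldsymbol{0}^2(w)=0$ or, if $w\in V(G_1)\cap V(G_2)$, equals $f^1(w)=0$ by the vanishing hypothesis. The $G_1$-part equals $\sum_{w\sim_{G_1} v} f^1(w)$, which the eigenequation for $f^1$ at $v$ rewrites as $(1-\lambda)\deg_{G_1}(v)\,f^1(v) = 0$. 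Thus the entire right-hand side is $0 = (1-\lambda)g(v)$, and \eqref{eq:eigenpair} holds. Having verified all three cases, I conclude that $g = f^1\sqcup_E \boldsymbol{0}^2$ is an eigenfunction of $G$ with eigenvalue $\lambda$.
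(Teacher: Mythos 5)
Your proof is correct and follows exactly the route the paper intends: the paper's proof of the $1$-sum version (Proposition \ref{prop:gluingzero}) is simply ``easily checked using Equation \eqref{eq:eigenpair},'' and the paper states that this edge-disjoint generalization ``follows similarly,'' which is precisely the case-by-case verification of \eqref{eq:eigenpair} you carry out, with the edge-disjointness giving the clean split $\deg_G v = \deg_{G_1}v + \deg_{G_2}v$ of the neighbor sum. Your write-up just makes explicit the details the paper leaves to the reader.
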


In the context of the edge-disjoint union, we can also establish the following generalization of Theorem \ref{thm:1-sum}.

\begin{theorem}\label{thm:edgedu}
    We have that
    \[
    \lambda_{\max}(G_1\sqcup_EG_2) \leq \max \biggl\{ \lambda_{\max}(G_1),\lambda_{\max}(G_2)\biggr\}.
    \]
\end{theorem}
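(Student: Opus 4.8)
The plan is to mirror the proof of Theorem \ref{thm:1-sum}, replacing the bookkeeping of the $1$-sum with that of the edge-disjoint union. First I would invoke the variational characterization of the largest eigenvalue and choose a function $f\colon V(G_1\sqcup_E G_2)\to\R$ attaining $\RQ(f)=\lambda_{\max}(G_1\sqcup_E G_2)$, then set $f^i\coloneqq f|_{G_i}$ for $i=1,2$.

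The heart of the argument is to show that both the numerator and the denominator of $\RQ(f)$ split as a sum over $G_1$ and $G_2$. For the numerator this is immediate from the defining property $E(G_1\sqcup_E G_2)=E(G_1)\sqcup E(G_2)$ in Definition \ref{def:k-union}: the sum $\sum_{v\sim w}(f(v)-f(w))^2$ decomposes according to which of the two edge sets each edge lies in. For the denominator, the key observation is that every vertex satisfies $\deg_{G_1\sqcup_E G_2}(v)=\deg_{G_1}(v)+\deg_{G_2}(v)$ (with the convention $\deg_{G_i}(v)=0$ when $v\notin V(G_i)$), because no edge is counted twice. Hence $\sum_v \deg_{G_1\sqcup_E G_2}(v)f(v)^2$ splits into the two weighted sums $\sum_{v\in V(G_i)}\deg_{G_i}(v)f^i(v)^2$. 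Consequently $\RQ(f)=(a+b)/(c+d)$, where $a/c=\RQ_{G_1}(f^1)$ and $b/d=\RQ_{G_2}(f^2)$.

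With this decomposition in hand, the conclusion follows as before. If $f^1=\boldsymbol{0}^1$ or $f^2=\boldsymbol{0}^2$, then $\RQ(f)$ equals the Rayleigh quotient of the surviving restriction, which is bounded by the corresponding $\lambda_{\max}$. Otherwise all four quantities $a,b,c,d$ are strictly positive — here I would use that $G_1$ and $G_2$ have no isolated vertices, so that a nonzero restriction forces a strictly positive denominator — and Lemma \ref{lem:fractions} yields $\RQ(f)\le\max\{\RQ_{G_1}(f^1),\RQ_{G_2}(f^2)\}\le\max\{\lambda_{\max}(G_1),\lambda_{\max}(G_2)\}$.

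The main obstacle, though a mild one, is the denominator decomposition: one must confirm that degrees genuinely add rather than, say, being miscounted on the shared vertices $V(G_1)\cap V(G_2)$. This is exactly where the hypothesis $E(G_1)\cap E(G_2)=\varnothing$ does its work; a vertex in the intersection may collect neighbors from both graphs, but since the two edge sets are disjoint these contributions are additive and non-overlapping. Once this is verified, the remaining steps are the routine Rayleigh-quotient manipulation already carried out for the $1$-sum.
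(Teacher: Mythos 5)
Your proposal is correct and is exactly the argument the paper intends: the paper's proof of Theorem \ref{thm:edgedu} simply states that it is analogous to that of Theorem \ref{thm:1-sum}, with Lemma \ref{lem:fractions} and edge-disjointness as the key ingredients, and you have filled in precisely those details (the edge-disjoint splitting of the numerator, the additivity of degrees for the denominator, and the min/max inequality for mediants). No gaps.
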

\begin{proof}
    The proof is analogous to the proof of Theorem \ref{thm:1-sum}. The key ingredients are Lemma \ref{lem:fractions} and the fact that $G_1$ and $G_2$ are edge-disjoint.
\end{proof}
A special case of Theorem \ref{thm:edgedu} can be found in Lemma 15 from Coutinho, Grandsire and Passos (2019) \cite{Gabriel-colouring}.

Note that we cannot generalize the results from Section \ref{section:1-sum-spectrum} about the $1$-sum and the coloring number to the more general case of the edge-disjoint union. This is partly due to the fact that the following inequality is not always an equality:
\[
\chi(G_1\sqcup_EG_2) \geq \max\{\chi(G_1),\chi(G_2)\}.
\]
However, we make the extra assumption that $\chi(G_1\sqcup_EG_2) = \max\{\chi(G_1),\chi(G_2)\}$,  we can then generalize Corollary \ref{cor:gluing} to obtain the following corollary of Theorem \ref{thm:edgedu} and Theorem \ref{thm:edgespread}.
\begin{corollary}\label{cor:gluinggen}
    If $G_1$ and $G_2$ are two graphs with the same coloring number $\chi$ such that $\lambda_{\max}(G_1) = \lambda_{\max}(G_2) = \chi/(\chi-1)$ and $\chi(G_1\sqcup_EG_2) = \chi$,
    then
    \[
    \lambda_{\max}\bigl(G_1\sqcup_EG_2\bigr) =\frac{\chi}{\chi-1}.
    \]
\end{corollary}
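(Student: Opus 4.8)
The plan is to sandwich $\lambda_{\max}(G_1\sqcup_E G_2)$ between $\chi/(\chi-1)$ and $\chi/(\chi-1)$, forcing equality, exactly as in the proof of Corollary \ref{cor:gluing} for the $1$-sum. First I would obtain the upper bound directly from Theorem \ref{thm:edgedu}: since by hypothesis $\lambda_{\max}(G_1) = \lambda_{\max}(G_2) = \chi/(\chi-1)$, that theorem immediately yields
\[
\lambda_{\max}(G_1\sqcup_E G_2) \leq \max\bigl\{\lambda_{\max}(G_1),\lambda_{\max}(G_2)\bigr\} = \frac{\chi}{\chi-1}.
\]

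For the matching lower bound I would apply the general chromatic inequality $\lambda_N \geq \chi/(\chi-1)$ from Theorem \ref{thm:3.1DS} to the graph $G_1\sqcup_E G_2$ itself. This is precisely where the extra hypothesis $\chi(G_1\sqcup_E G_2) = \chi$ enters: it guarantees that the coloring number of the edge-disjoint union is exactly $\chi$, so that Theorem \ref{thm:3.1DS} reads
\[
\lambda_{\max}(G_1\sqcup_E G_2) \geq \frac{\chi}{\chi-1}.
\]
Combining the two inequalities gives $\lambda_{\max}(G_1\sqcup_E G_2) = \chi/(\chi-1)$, which is the claim.

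There is no genuine technical obstacle here beyond ensuring that both bounds land on the same value; the result is an immediate combination of the interlacing bound and the lower chromatic bound. The one point worth flagging, and the reason the chromatic hypothesis cannot simply be dropped, is that the map $t\mapsto t/(t-1)$ is strictly decreasing on $(1,\infty)$. As observed just before the statement, one only has $\chi(G_1\sqcup_E G_2)\geq \max\{\chi(G_1),\chi(G_2)\}$ in general, and if this were strict then Theorem \ref{thm:3.1DS} would only deliver $\lambda_{\max}(G_1\sqcup_E G_2) \geq \chi'/(\chi'-1) < \chi/(\chi-1)$ for the true coloring number $\chi' > \chi$, leaving a gap below the upper bound. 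Assuming $\chi(G_1\sqcup_E G_2) = \chi$ removes exactly this gap.
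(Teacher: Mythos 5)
Your proof is correct and takes essentially the same route as the paper, which presents this as an immediate corollary obtained by sandwiching: the upper bound $\lambda_{\max}(G_1\sqcup_E G_2)\leq \chi/(\chi-1)$ from Theorem \ref{thm:edgedu} and the lower bound from Theorem \ref{thm:3.1DS} applied to $G_1\sqcup_E G_2$, using the hypothesis $\chi(G_1\sqcup_E G_2)=\chi$. Your closing remark about why that chromatic hypothesis cannot be dropped is the same observation the paper makes in the sentence preceding the statement.
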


\section{Upper bounds}\label{section:upperbounds}
In this section, we shall give some upper bounds on the largest eigenvalue $\lambda_N$ of a fixed graph $G$ which depend on its coloring number $\chi$. We start with a theorem for graphs that admit a proper $\chi$-coloring which has coloring classes of the same size, and we then generalize it to a theorem which applies to all graphs.
\begin{theorem}\label{thm:upper1}
     Let $\delta$ denote the smallest vertex degree of $G$.
     If there exists a proper $\chi$-coloring of the vertices for which all coloring classes have the same size, then
     \begin{equation*}
         \lambda_N\leq  \frac{N}{\delta}.
     \end{equation*}Moreover, the inequality is sharp.
\end{theorem}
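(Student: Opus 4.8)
The plan is to estimate the Rayleigh quotient directly, using the variational characterization $\lambda_N=\max_{f\neq\boldsymbol 0}\RQ(f)$ together with the observation that $G$ embeds into a balanced complete multipartite graph. Fix a proper $\chi$-coloring with classes $V_1,\ldots,V_\chi$, each of size $s\coloneqq N/\chi$. Since the coloring is proper, every edge of $G$ joins two distinct classes, so $G$ is a \emph{spanning subgraph} of the Tur\'an graph $T\coloneqq K_{\underbrace{s,\ldots,s}_\chi}$ (cf.\ Section \ref{subsec:listofgphs}), in which $v\sim w$ precisely when $v$ and $w$ lie in different classes. Because every summand $(f(v)-f(w))^2$ is nonnegative and $E(G)\subseteq E(T)$, the numerator $\sum_{\{v,w\}\in E(G)}(f(v)-f(w))^2$ is at most the analogous Dirichlet-type sum taken over $E(T)$, while the denominator can be bounded below separately by the minimum degree. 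It therefore suffices to bound the numerator over $T$ from above and the denominator from below.

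The main step is the computation over $T$. Writing $S\coloneqq\sum_{v\in V}f(v)$ and $S_i\coloneqq\sum_{v\in V_i}f(v)$, expanding $(f(v)-f(w))^2$ over all pairs in distinct classes gives the identity
\[
\sum_{\{v,w\}\in E(T)}\bigl(f(v)-f(w)\bigr)^2 = (N-s)\sum_{v\in V}f(v)^2 \;-\; S^2 \;+\; \sum_{i=1}^\chi S_i^2,
\]
where the coefficient $N-s$ is exactly the number of vertices lying outside the class of any given vertex; this is the first place the equal-size hypothesis enters. The term $-S^2$ is nonpositive and may be discarded, and Cauchy--Schwarz with $|V_i|=s$ gives $S_i^2\le s\sum_{v\in V_i}f(v)^2$, hence $\sum_i S_i^2\le s\sum_{v\in V}f(v)^2$ — the second, decisive, use of the hypothesis. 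Combining, the numerator of $\RQ_G(f)$ is at most $N\sum_{v\in V}f(v)^2$, while $\sum_{v\in V}\deg v\cdot f(v)^2\ge \delta\sum_{v\in V}f(v)^2$. Thus $\RQ_G(f)\le N/\delta$ for every $f\neq\boldsymbol 0$, and taking the maximum yields $\lambda_N\le N/\delta$. I expect the only genuine subtlety to be locating precisely where balancedness is needed: without equal class sizes neither the per-vertex count $N-s$ nor the summed Cauchy--Schwarz estimate collapses to the clean bound $N\sum_v f(v)^2$.

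For sharpness I would simply exhibit the Tur\'an graph $T=K_{s,\ldots,s}$ itself. It is regular of degree $\delta=N-s=N(\chi-1)/\chi$, so $N/\delta=\chi/(\chi-1)$; on the other hand its normalized Laplacian spectrum (Section \ref{subsec:listofgphs}) has largest eigenvalue $\chi/(\chi-1)$. Hence $\lambda_N=N/\delta$ for $T$, and the inequality is attained. Tracing the equality case of the argument is a useful consistency check: equality forces the support to induce all of $T$, forces $f$ to be constant on each class with $S=0$, and forces regularity at the degree $\delta$ — all of which hold for the balanced complete multipartite graph.
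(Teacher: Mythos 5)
Your proof is correct, and it follows the same overall route as the paper: fix a balanced proper $\chi$-coloring, observe that $G$ is a spanning subgraph of the Tur\'an graph $T=K_{s,\ldots,s}$ so that the Dirichlet sum only grows when extended to all pairs in distinct classes, and bound the denominator below by $\delta\sum_v f(v)^2$. The only real difference is the final step. The paper recognizes the enlarged quotient as $\tfrac{k}{\delta}\,\RQ_{T}(f)$ with $k=N-N/\chi$ the regular degree of $T$, and then invokes the known fact $\lambda_N(T)=\chi/(\chi-1)$ (citing Sun--Das) to conclude $\lambda_N(G)\le \tfrac{k}{\delta}\cdot\tfrac{\chi}{\chi-1}=N/\delta$. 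You instead expand the Tur\'an numerator explicitly as $(N-s)\sum_v f(v)^2 - S^2 + \sum_i S_i^2$, discard $-S^2$, and apply Cauchy--Schwarz on each class to get the bound $N\sum_v f(v)^2$; both your identity and the Cauchy--Schwarz step check out, and the balancedness hypothesis enters exactly where you say it does. What your version buys is self-containedness: it avoids the external spectral input on complete multipartite graphs (indeed, specializing your computation to $G=T$ and $\delta=N-s$ reproves $\lambda_N(T)=\chi/(\chi-1)$). What the paper's version buys is brevity and a template that generalizes more readily to unbalanced classes, which is exactly what the subsequent theorem does by citing the corresponding bound $\lambda_N(K_{N_1,\ldots,N_\chi})\le N/(N-N_1)$. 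Your sharpness example is the same as the paper's.
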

\begin{proof}
 If $f$ is an eigenfunction for $\lambda_N$ and the coloring classes are denoted by $V_1,\ldots,V_\chi$, then
\begin{align*}
         \lambda_N(G)&= \RQ_G(f)\\
        &= \frac{\sum_{v\sim w}\biggl(f(v)-f(w)\biggr)^2}{\sum_{v\in V}\deg v\cdot f(v)^2}\\
    &=\frac{\sum_{i\neq j}\sum_{\substack{w_i\sim w_j,\\ w_i\in V_i, w_j\in V_j}}\biggl(f(w_i)-f(w_j)\biggr)^2}{\sum_{v\in V}\deg v\cdot f(v)^2}\\
    &\leq \frac{\sum_{i\neq j}\sum_{\substack{w_i\sim w_j,\\ w_i\in V_i, w_j\in V_j}}\biggl(f(w_i)-f(w_j)\biggr)^2}{\sum_{v\in V}\delta \cdot f(v)^2}\\
     &\leq \frac{\sum_{i\neq j}\sum_{w_i\in V_i, w_j\in V_j}\biggl(f(w_i)-f(w_j)\biggr)^2}{\sum_{v\in V}\delta \cdot f(v)^2}.
\end{align*}

Now, by assumption, each coloring class $V_i$ has size $N/\chi$. Let $\widehat{G}$ be the  complete multipartite graph with partition classes $V_1,\ldots,V_\chi$, and let $k:=N-N/\chi$. Then, $\widehat{G}$ is a $k$-regular graph and, by Theorem 3.6 in \cite{SunDas2020}, $\lambda_N(\widehat{G})=\chi/(\chi-1)$. Therefore,

\begin{align*}
         \lambda_N(G)
     &\leq \frac{\sum_{i\neq j}\sum_{w_i\in V_i, w_j\in V_j}\biggl(f(w_i)-f(w_j)\biggr)^2}{\sum_{v\in V}\delta \cdot f(v)^2}\\
     &= \frac{k}{\delta} \cdot \frac{\sum_{i\neq j}\sum_{w_i\in V_i, w_j\in V_j}\biggl(f(w_i)-f(w_j)\biggr)^2}{\sum_{v\in V}k \cdot f(v)^2} \\
     & = \frac{k}{\delta} \cdot\RQ_{\widehat{G}}(f)\\
     &\leq \frac{k}{\delta} \cdot \lambda_N(\widehat{G})\\
     &= \frac{k}{\delta} \cdot \frac{\chi}{\chi-1}\\
     &=\frac{N}{\delta}.
\end{align*}
This proves the inequality. It is sharp since it becomes an equality for complete multipartite graphs with coloring classes of the same size.
\end{proof}
 We now offer a generalization of Theorem \ref{thm:upper1} to all graphs.
\begin{theorem}
Fix a proper $\chi$-coloring with coloring classes $V_1,V_2,\ldots, V_\chi$ such that their cardinalities $N_i\coloneqq |V_i|$ satisfy $N_i\geq N_{i+1}$ for $1\leq i\leq \chi$. Let also
     \[
     x\coloneqq \min_{1\leq i\leq \chi, v\in V_i}\frac{\deg v}{N-N_i}.
     \]
     Then,
     \begin{equation*}
         \lambda_N\leq \frac1x \cdot \frac N{N-N_1}.
     \end{equation*}Furthermore, this inequality is sharp.
\end{theorem}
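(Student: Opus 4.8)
The plan is to mirror the proof of Theorem~\ref{thm:upper1}, replacing its uniform degree bound $\deg v\geq\delta$ by a class-dependent bound calibrated exactly by the quantity $x$. Let $f$ be an eigenfunction for $\lambda_N$, so that $\lambda_N=\RQ_G(f)$. Since the coloring is proper, every edge joins two distinct classes, so the numerator $\sum_{v\sim w}\bigl(f(v)-f(w)\bigr)^2$ ranges only over cross-class pairs; I will bound the numerator from above and the denominator from below separately.

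For the denominator, the definition of $x$ gives $\deg v\geq x\,(N-N_i)$ for every $v\in V_i$. Because $N_1\geq N_i$ yields $N-N_i\geq N-N_1$ and $x\geq 0$, this strengthens to $\deg v\geq x\,(N-N_1)$ for \emph{every} vertex, whence $\sum_{v\in V}\deg v\,f(v)^2\geq x\,(N-N_1)\sum_{v\in V}f(v)^2$. (Keeping instead the sharper $\deg v\geq x(N-N_i)$ recognizes $\sum_i(N-N_i)\sum_{v\in V_i}f(v)^2$ as the degree-weighted squared norm of $f$ on the complete multipartite graph $\widehat G$ built on the same color classes, which is the route closest to Theorem~\ref{thm:upper1}.)

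For the numerator I would enlarge the sum over cross-class edges to the sum over \emph{all} unordered pairs of distinct vertices, which only increases it: $\sum_{v\sim w}\bigl(f(v)-f(w)\bigr)^2\leq \sum_{\{v,w\}:v\neq w}\bigl(f(v)-f(w)\bigr)^2 = N\sum_{v\in V}f(v)^2-\bigl(\sum_{v\in V}f(v)\bigr)^2\leq N\sum_{v\in V}f(v)^2$. Combining the two estimates gives $\lambda_N\leq \frac{N\sum_v f(v)^2}{x(N-N_1)\sum_v f(v)^2}=\frac1x\cdot\frac{N}{N-N_1}$, as claimed. In the $\widehat G$-formulation the same two estimates read $\lambda_N\leq \frac1x\,\RQ_{\widehat G}(f)\leq \frac1x\,\lambda_{\max}(\widehat G)$, and one finishes by noting that the minimum degree of $\widehat G$ is $N-N_1$ (attained on the largest class $V_1$) together with the elementary fact that any graph on $N$ vertices with minimum degree $d$ satisfies $\lambda_{\max}\leq N/d$, which is proved by precisely the all-pairs enlargement above.

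The step needing the most care is the denominator estimate: one must verify that replacing $N-N_i$ by $N-N_1$ is legitimate (it is, since $N_1$ is the largest class and $x\geq 0$), and that $x$ is exactly the normalization making the bound collapse to Theorem~\ref{thm:upper1} in the equal-size case—there $x=1$, $N_1=N/\chi$, and $\frac1x\cdot\frac{N}{N-N_1}=\frac{\chi}{\chi-1}$. For sharpness I would exhibit the complete multipartite graphs with equal-sized classes: for these $x=1$ and $\lambda_{\max}=\chi/(\chi-1)=N/(N-N_1)$, so the inequality is attained, recovering the sharp case of Theorem~\ref{thm:upper1}.
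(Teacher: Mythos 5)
Your proof is correct, and your primary line of argument is genuinely more elementary than the paper's. The paper keeps the class-dependent lower bound $\deg v\geq x\,(N-N_i)$ in the denominator, recognizes the resulting quotient as $\tfrac1x\RQ_{\widehat G}(f)$ for the complete multipartite graph $\widehat G=K_{N_1,\ldots,N_\chi}$ built on the same color classes, and then invokes an external result (Theorem 3.5 of Sun and Das) giving $\lambda_N(\widehat G)\leq N/(N-N_1)$. You instead coarsen both estimates: you enlarge the numerator to all unordered pairs via the identity $\sum_{\{v,w\}:v\neq w}\bigl(f(v)-f(w)\bigr)^2=N\sum_v f(v)^2-\bigl(\sum_v f(v)\bigr)^2\leq N\sum_v f(v)^2$, and you weaken the denominator to $x\,(N-N_1)\sum_v f(v)^2$ using $N-N_i\geq N-N_1$; this lands directly on $\tfrac1x\cdot\tfrac N{N-N_1}$ with no reference to $\widehat G$ or to the cited theorem. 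What the paper's route buys is the intermediate, in principle sharper, statement $\lambda_N(G)\leq\tfrac1x\lambda_N(\widehat G)$ (strictly better than the stated bound whenever the classes have unequal sizes); what yours buys is self-containedness, and indeed your all-pairs estimate reproves the cited bound on $\lambda_N(\widehat G)$ as the special case of a graph with minimum degree $N-N_1$, as you note in your parenthetical remarks, which essentially reconstruct the paper's route as an alternative. Your sharpness example (equal-sized classes, where $x=1$) is the same as the paper's, which appeals to Theorem \ref{thm:upper1}. One small point worth making explicit: $x>0$, since connectivity and $N\geq2$ force $\chi\geq2$, hence $N_i<N$ for every $i$, and all degrees are positive; this is needed when you divide by $x$.
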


\begin{proof}
 If $f$ is an eigenfunction for $\lambda_N$, then
\begin{align*}
    \lambda_N(G)&= \RQ_G(f)\\
    &= \frac{\sum_{v\sim w}\biggl(f(v)-f(w)\biggr)^2}{\sum_{v\in V}\deg v\cdot f(v)^2}\\
    &=\frac{\sum_{i\neq j}\sum_{\substack{w_i\sim w_j,\\ w_i\in V_i, w_j\in V_j}}\biggl(f(w_i)-f(w_j)\biggr)^2}{\sum_{v\in V}\deg v\cdot f(v)^2}\\
    &\leq \frac{\sum_{i\neq j}\sum_{\substack{w_i\sim w_j,\\ w_i\in V_i, w_j\in V_j}}\biggl(f(w_i)-f(w_j)\biggr)^2}{x\sum_{\substack{1\leq i\leq \chi, \\ v\in V_i}}\bigl(N-N_i\bigr)f(v)^2}\\
     &\leq \frac{\sum_{i\neq j}\sum_{w_i\in V_i, w_j\in V_j}\biggl(f(w_i)-f(w_j)\biggr)^2}{x\sum_{\substack{1\leq i\leq \chi, \\ v\in V_i}}\bigl(N-N_i\bigr)f(v)^2}.
\end{align*}

By assumption, each coloring class $V_i$ has size $N_i$. Let $\widehat G\coloneqq K_{N_1,\ldots,N_\chi}$ be the  complete multipartite graph with partition classes $V_1,\ldots,V_\chi$. Then, by Theorem 3.5 in \cite{SunDas2020.2}, $$\lambda_N(\widehat G) \leq \frac{N}{N-N_1}.$$
Therefore,
\begin{align*}
    \lambda_N(G)
     &\leq \frac{\sum_{i\neq j}\sum_{w_i\in V_i, w_j\in V_j}\biggl(f(w_i)-f(w_j)\biggr)^2}{x\sum_{\substack{1\leq i\leq \chi, \\ v\in V_i}}\bigl(N-N_i\bigr)f(v)^2}\\
     &= \frac1x\cdot \frac{\sum_{i\neq j}\sum_{w_i\in V_i, w_j\in V_j}\biggl(f(w_i)-f(w_j)\biggr)^2}{\sum_{\substack{1\leq i\leq \chi, \\ v\in V_i}}\bigl(N-N_i\bigr)f(v)^2}\\
     &= \frac{1}{x} \cdot\RQ_{\widehat{G}}(f)\\
     &\leq \frac{1}x \cdot \lambda_N(\widehat{G})\\
     &\leq \frac1x \cdot \frac N{N-N_1}.
\end{align*} The inequality is sharp because of Theorem \ref{thm:upper1}.
\end{proof}

We shall now prove a theorem for graphs that satisfy the setting of Question \ref{qu:equality}. 
\begin{theorem}\label{thm:d-reg}
    Fix a proper $\chi$-coloring that has coloring classes $V_1,\ldots,V_\chi$. Assume that $G$ is $d$-regular, and that
    \[
    e\bigl(v,V_i\bigr) = \begin{cases}
    \frac{\deg v}{\chi-1}&\text{ if } v\notin V_i,\\
    0 &\text{ if }v\in V_i.
    \end{cases}
    \]
    Then,
    \[
    \lambda_N \leq \max\biggl\{ \frac{N}{d}\cdot\frac{\chi-1}{\chi},\frac{\chi}{\chi-1}\biggr\}.
    \]
\end{theorem}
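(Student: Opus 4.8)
The plan is to exploit the equitable hypothesis to split the eigenfunction realizing $\lambda_N$ into a \emph{class-constant} part and a \emph{class-mean-zero} part, and to bound the Rayleigh quotient of each part separately against the complete multipartite graph built on the same color classes. First I would record a preliminary observation: since $G$ is $d$-regular and $e(v,V_j)=d/(\chi-1)$ for every $v\notin V_j$, double-counting the edges between two classes gives $|V_i|\cdot d/(\chi-1)=e(V_i,V_j)=|V_j|\cdot d/(\chi-1)$, so all color classes have the same size $|V_i|=N/\chi=:n$. Consequently the complete multipartite graph $\widehat G:=K_{n,\ldots,n}$ on the same classes is $(N-n)$-regular and, by Section \ref{subsec:listofgphs}, satisfies $\lambda_{\max}(\widehat G)=\chi/(\chi-1)$.

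Next, let $f$ be an eigenfunction for $\lambda_N$ and write $f=g+h$, where $g$ takes on each class $V_i$ the average of $f$ over $V_i$, and $h:=f-g$ has $\sum_{v\in V_i}h(v)=0$ for every $i$. Let $U\subseteq C(V)$ be the subspace of functions that are constant on each class; then $g\in U$ and $h\in U^\perp$ with respect to $\langle\cdot,\cdot\rangle$. The equitable hypothesis, rewritten as the statement that $Ag\in U$ whenever $g\in U$, shows that $U$ is invariant under $L=\id-\tfrac1d A$; since $L$ is self-adjoint, $U^\perp$ is invariant as well. Hence the cross terms $\langle Lg,h\rangle$ and $\langle Lh,g\rangle$ vanish, and
\[
\RQ(f)=\frac{\langle Lg,g\rangle+\langle Lh,h\rangle}{\langle g,g\rangle+\langle h,h\rangle}\leq\max\bigl\{\RQ(g),\RQ(h)\bigr\}
\]
by the mediant inequality of Lemma \ref{lem:fractions}, with the degenerate cases $g=0$, $h=0$, or $\langle Lg,g\rangle=0$ handled directly.

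It then remains to bound the two quotients. For the class-constant part, a direct computation using $e(V_i,V_j)=n\,d/(\chi-1)$ shows that $\RQ_G(g)$ coincides with $\RQ_{\widehat G}(g)$ (both equal $\tfrac1{\chi-1}$ times $\sum_{i<j}(c_i-c_j)^2/\sum_i c_i^2$, where $c_i$ is the value of $g$ on $V_i$), so $\RQ(g)\leq\lambda_{\max}(\widehat G)=\chi/(\chi-1)$. For the mean-zero part, I would observe that any $h$ with $\sum_{v\in V_i}h(v)=0$ for all $i$ is an eigenfunction of $\widehat G$ with eigenvalue $1$, whence $\sum_{v\sim_{\widehat G} w}(h(v)-h(w))^2=(N-n)\sum_v h(v)^2$. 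Since every edge of $G$ is an edge of $\widehat G$ and each summand is nonnegative, the numerator of $\RQ_G(h)$ is at most $(N-n)\sum_v h(v)^2$, while its denominator is $d\sum_v h(v)^2$; therefore $\RQ(h)\leq(N-n)/d=\tfrac{N}{d}\cdot\tfrac{\chi-1}{\chi}$. Combining the two bounds gives $\lambda_N\leq\max\bigl\{\chi/(\chi-1),\ \tfrac{N}{d}\cdot\tfrac{\chi-1}{\chi}\bigr\}$, as claimed.

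The main obstacle is the cross-term cancellation: it is precisely here that the equitable hypothesis is indispensable, since it is what makes $U$ an $L$-invariant subspace and lets the single Rayleigh quotient split into a maximum over the two orthogonal pieces. Everything else reduces to the explicit spectrum of the Turán graph and to the monotonicity of the Dirichlet-type sum $\sum_{v\sim w}(h(v)-h(w))^2$ under deleting edges.
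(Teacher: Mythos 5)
Your proof is correct, and it reaches the two key sub-bounds of the paper's argument by a somewhat different and arguably cleaner mechanism. The paper proceeds by a case analysis on the top eigenvalue: either $\lambda_N=\chi/(\chi-1)$ (realized by the eigenfunctions $f_{1i}$ of Proposition \ref{prop:eigenfns}), or the top eigenfunction is orthogonal to the $f_{1i}$'s and to the constants, in which case Corollary \ref{cor:orthog} forces $\sum_{v\in V_i}f(v)=0$ for every class and a direct expansion of $\sum_{i<j}\sum_{v_i\in V_i,v_j\in V_j}(f(v_i)-f(v_j))^2$ yields the bound $(N-N/\chi)/d$. You instead decompose an arbitrary function into its class-constant part $g$ and class-mean-zero part $h$, use the equitable hypothesis to show these live in complementary $L$-invariant subspaces so that the Rayleigh quotient splits, and then bound $\RQ(g)$ by comparison with the Tur\'an graph $\widehat G$ and $\RQ(h)$ via the eigenvalue-$1$ eigenspace of $\widehat G$ together with edge monotonicity of the Dirichlet sum. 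The two routes are computationally equivalent (your bound on $\RQ(h)$ and the paper's expansion with vanishing cross terms are the same identity in disguise), but your version has two small advantages: it makes explicit the fact, used silently in the paper's computation, that the equitable hypothesis forces all color classes to have size $N/\chi$; and it avoids the paper's slightly loose invocation of Corollary \ref{cor:orthog} for ``a non-constant function $f\notin\langle f_{1i}\rangle$'' (that corollary applies to eigenfunctions, and your decomposition never needs the top eigenfunction itself to have zero class sums). Just make sure, when invoking Lemma \ref{lem:fractions}, that you genuinely treat the boundary cases $g=0$, $h=0$, and $\langle Lg,g\rangle=0$, since the lemma as stated requires strictly positive entries.
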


\begin{proof}
    Let $f\notin \langle f_{1i}\colon 2\leq i\leq\chi\rangle$ be a non-constant function. Then, by Corollary \ref{cor:orthog}, for all $j$ with $1\leq j\leq \chi$ we have that
    \begin{equation}\label{eq:sumzero}
    \sum_{v\in V_i}f(v) = \frac1d \sum_{v\in V_i}\deg v f(v) = 0.
    \end{equation}
    This implies that
    \begin{align*}
        \RQ(f) &=\frac{\sum_{1\leq i<j\leq \chi}\sum_{\substack{v_i\sim v_j\colon\\v_i\in V_i\\v_j\in V_j}}\Bigl(f(v_i)-f(v_j)\Bigr)^2}{d\sum_{v\in V}f(v)^2}\\
        &\leq \frac{\sum_{1\leq i<j\leq \chi}\sum_{\substack{v_i\in V_i\\v_j\in V_j}}\Bigl(f(v_i)-f(v_j)\Bigr)^2}{d\sum_{v\in V}f(v)^2}\\
        &= \frac{\sum_{v\in V}\Bigl(N-N/\chi\Bigr)f(v)^2 - 2\sum_{1\leq i<j\leq \chi}\sum_{v_i\in V_i}f(v_i)\sum_{v_j\in V_j}f(v_j)}{d\sum_{v\in V}f(v)^2}\\
        &\overset{\eqref{eq:sumzero}}{=} \frac{\Bigl(N-N/\chi\Bigr)\sum_{v\in V}f(v)^2}{d\sum_{v\in V}f(v)^2}\\
        &= \frac{N-N/\chi}d\\
        &= \frac Nd\cdot \frac{\chi-1}{\chi}.
    \end{align*}    
    Furthermore, by Proposition \ref{prop:eigenfns}, we have that the functions $f_{1i}$'s from Definition \ref{def:f_ij} are eigenfunctions with corresponding eigenvalue $\chi/(\chi-1)$. Therefore, if $f$ is an eigenfunction such that $\RQ(f) = \lambda_N$, then either $\RQ(f) = \chi/(\chi-1)$, or $$\RQ(f)>\frac{\chi}{(\chi-1)}\quad \text{and}\quad\RQ(f) \leq \frac{N(\chi-1)}{d\chi}.$$
    This implies that
    \[
    \lambda_N \leq \max\biggl\{ \frac{N}{d}\cdot\frac{\chi-1}{\chi},\frac{\chi}{\chi-1}\biggr\}.\qedhere
    \]
\end{proof}

\begin{remark}
    Note that we cannot give a better upper bound than $2$ for $\lambda_N$ if our only information about a graph is its coloring number $\chi$. Consider, for example, the complete multipartite graph $G$ with coloring classes $V_1,\ldots,V_\chi$ of sizes
    \[
    |V_i| = \begin{cases}
        t, &\text{ if } i=1,\\
        1, &\text{ otherwise.}
    \end{cases}
    \]
    This graph is also known as a \emph{complete split graph}.    
    Note that $N = |V(G)| = t + \chi-1$.
    We know by Lemma 2.14 in \cite{SunDas2019} that the largest eigenvalue of $G$ equals
    \[
    \lambda_{N}(G) = 1+\frac{t}{N-1} = 2 - \frac{\chi-2}{N-1}
    \]
    and we have that
    \[
    \lim_{N\to\infty} \lambda_N(G) = 2 - \lim_{N\to\infty}\biggl(\frac{\chi-2}{N-1}\biggr) = 2.
    \]
    Since we can choose $t$ independently of $\chi$, we see that the best upper bound that we can give for $\lambda_N$ equals $2$.
\end{remark}

\bibliographystyle{plain} 

\bibliography{Bibliography}

\end{document}